\documentclass{amsart}

\usepackage{amsthm}
\usepackage{amssymb}
\usepackage{color}
\usepackage{enumerate}
\usepackage[all]{xy}
\usepackage{verbatim}

\usepackage{tikz-cd} 
\usepackage{hyperref}

\SelectTips{cm}{}
\calclayout 
\makeatletter 
\makeatother

\hyphenation{co-limit}


\numberwithin{equation}{section}  

\theoremstyle{plain}

\newtheorem{lemma}{Lemma}[section]
\newtheorem{theorem}[lemma]{Theorem}
\newtheorem*{theorem*}{Theorem}
\newtheorem*{corollary*}{Corollary}
\newtheorem{proposition}[lemma]{Proposition}
\newtheorem{corollary}[lemma]{Corollary}

\theoremstyle{definition}
\newtheorem{definition}[lemma]{Definition}
\newtheorem{example}[lemma]{Example}
\newtheorem{notation}[lemma]{Notation}
\newtheorem*{ack}{Acknowledgements}

\theoremstyle{remark} 
\newtheorem{remark}[lemma]{Remark}


\hyphenation{Grothen-dieck} 
\hyphenation{com-mu-ta-tive}


\newcommand{\coind}{\operatorname{\mathrm{coind}}}

\newcommand{\cosupp}{\operatorname{cosupp}}
\renewcommand{\dim}{\operatorname{dim}}
\newcommand{\End}{\operatorname{End}}
\newcommand{\Ext}{\operatorname{Ext}}

\newcommand{\height}{\operatorname{ht}}
\newcommand{\Hom}{\operatorname{Hom}}
\newcommand{\sHom}{\underline{\Hom}}
\newcommand{\id}{\operatorname{id}}

\newcommand{\ind}{\operatorname{\mathrm{ind}}}
\newcommand{\Inj}{\operatorname{\mathsf{Inj}}}
\newcommand{\KacInj}[1]{\mathsf K_{\mathrm{ac}}(\Inj #1)}
\newcommand{\Ker}{\operatorname{Ker}}
\newcommand{\KInj}[1]{\mathsf K(\Inj #1)}

\newcommand{\Loc}{\operatorname{\mathsf{Loc}}}
\renewcommand{\mod}{\operatorname{\mathsf{mod}}}
\newcommand{\Mod}{\operatorname{\mathsf{Mod}}}

\newcommand{\Proj}{\operatorname{Proj}}

\newcommand{\res}{\operatorname{res}}

\newcommand{\Spec}{\operatorname{Spec}}

\newcommand{\StMod}{\operatorname{\mathsf{StMod}}}
\newcommand{\stmod}{\operatorname{\mathsf{stmod}}}

\newcommand{\supp}{\operatorname{supp}}
\newcommand{\pisupp}{\pi\text{-}\supp}
\newcommand{\picosupp}{\pi\text{-}\cosupp}


\newcommand{\ges}{{\scriptscriptstyle\geqslant}}
 
\newcommand{\kos}[2]{{#1}/\!\!/{#2}} 
\newcommand{\les}{{\scriptscriptstyle\leqslant}}

\newcommand{\da}{{\downarrow}}
\newcommand{\lra}{\longrightarrow}

\newcommand{\xra}{\xrightarrow}

\newcommand{\bik}{Benson/Iyengar/Krause}


\newcommand{\mcB}{\mathcal{B}}

\newcommand{\mcE}{\mathcal{E}}
\newcommand{\mcG}{\mathcal{G}}

\newcommand{\mcV}{\mathcal{V}}
\newcommand{\mcW}{\mathcal{W}} 
 
\newcommand{\mcZ}{\mathcal{Z}}

\newcommand{\sfb}{\mathsf b}

\newcommand{\sfg}{\mathsf g}

\newcommand{\sfu}{\mathsf u}

\newcommand{\sfC}{\mathsf C}
\newcommand{\sfD}{\mathsf D}

\newcommand{\sfT}{\mathsf T} 
\newcommand{\sfU}{\mathsf U}

\newcommand{\bbG}{\mathbb G}
\newcommand{\bbP}{\mathbb P}
 
\newcommand{\bbZ}{\mathbb Z} 

\newcommand{\bsa}{\boldsymbol{a}} 
\newcommand{\bsb}{\boldsymbol{b}} 
 
\newcommand{\bsp}{\boldsymbol{p}} 
\newcommand{\bsq}{\boldsymbol{q}} 
 
\newcommand{\bst}{\boldsymbol{t}} 
\newcommand{\bsx}{\boldsymbol{x}} 
 
\newcommand{\bsz}{\boldsymbol{z}}

\newcommand{\fa}{\mathfrak{a}} 

\newcommand{\fm}{\mathfrak{m}} 
\newcommand{\fp}{\mathfrak{p}}
\newcommand{\fq}{\mathfrak{q}}

\newcommand{\one}{\mathbf 1}

\newcommand{\gam}{\varGamma}


\newcommand{\GL}{\operatorname{GL}\nolimits}
\newcommand{\SL}{\operatorname{SL}\nolimits}

\newcommand{\Gr}{\mcG_{(r)}}

\title[Stratification]{Stratification for module categories of \\ finite group schemes}

\author[Benson, Iyengar, Krause, and Pevtsova]{Dave Benson, Srikanth B. Iyengar, Henning Krause \\ and Julia Pevtsova}

\address{Dave Benson \\ 
Institute of Mathematics\\ 
University of Aberdeen\\ 
King's College\\ 
Aberdeen AB24 3UE\\ 
Scotland U.K.}

\address{Srikanth B. Iyengar\\ 
Department of Mathematics\\
University of Utah\\ 
Salt Lake City, UT 84112\\ 
U.S.A.}

\address{Henning Krause\\ 
Fakult\"at f\"ur Mathematik\\ 
Universit\"at Bielefeld\\ 
33501 Bielefeld\\ 
Germany.}

\address{Julia Pevtsova\\ 
Department of Mathematics\\ 
University of Washington\\ 
Seattle, WA 98195\\ 
U.S.A.}

\begin{document}

\begin{abstract} 
The tensor ideal localising subcategories of the stable module category of all,  
including infinite dimensional, representations of a finite group scheme over 
a field of positive characteristic are classified.  Various applications concerning 
the structure of the stable module category and the behavior of support and 
cosupport under restriction and induction are presented.
\end{abstract}

\keywords{cosupport, finite group scheme, localising subcategory, support, thick subcategory}
\subjclass[2010]{16G10 (primary); 20G10, 20J06 (secondary)}
\thanks{SBI and JP were partly supported by NSF grants DMS 1503044  and DMS 0953011, respectively.}

\date{\today}

\maketitle

\setcounter{tocdepth}{1}
\tableofcontents

\section*{Introduction}
This paper is about the representation theory of finite group schemes over a field $k$ of positive characteristic. A finite group scheme $G$ is an affine group scheme whose coordinate algebra is a finite dimensional vector space over $k$. In that case, the linear dual of the coordinate algebra, called the \emph{group algebra} of $G$, is a finite dimensional cocommutative Hopf  algebra, whose representation theory is equivalent to that of $G$. This means that all our results can be restated for finite dimensional cocommutative Hopf 
$k$-algebras, but we adhere to the geometric language.

Examples of finite group schemes include finite groups, restricted enveloping algebras of finite dimensional $p$-Lie algebras, and Frobenius kernels of algebraic groups. The representation theory of finite group schemes over $k$ is often wild, even in such small cases as the finite group $\bbZ/3 \times \bbZ/3$ over a field of characteristic three, or the $3$-dimensional Heisenberg Lie algebra. In constructive terms, this means that it is not possible to classify the finite dimensional indecomposable modules. One  thus has to find better ways to organise our understanding of the structure of the module category of $G$. Developments in stable homotopy theory and algebraic geometry suggest a natural extension of the process of building modules up to direct sums. Namely, in addition to (possibly infinite) direct sums and  summands, one allows taking syzygies (both positive and negative), extensions,  and tensoring  (over $k$) with simple $G$-modules. We say $M$ is \emph{built out} of $N$ if $M$ can be constructed out of $N$ using these operations.  What follows is one of the main results of this work.

\begin{theorem*}[Corollary~\ref{co:modules}]
Let $M$ and $N$ be non-zero $G$-modules. One can build $M$ out of $N$ if (and only if) there is an inclusion $\pisupp_{G}(M)\subseteq \pisupp_{G}(N)$.
\end{theorem*}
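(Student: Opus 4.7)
The plan is to reformulate the conclusion in terms of localising subcategories and then to reduce it to the paper's main stratification theorem for $\StMod G$. First, observe that the modules that can be built from $N$ are precisely the objects of $\Loc^{\otimes}(N)$, the smallest tensor-ideal localising subcategory of $\StMod G$ containing $N$: positive and negative syzygies are the (inverse) shift in $\StMod G$, extensions correspond to distinguished triangles, and tensoring with simples, together with extensions and direct sums, produces closure under $(-)\otimes_{k} X$ for every $X\in\mod G$, since every finite-dimensional $G$-module is iteratively extended from simples. So the goal reduces to showing that $M\in\Loc^{\otimes}(N)$ if and only if $\pisupp_G(M)\subseteq\pisupp_G(N)$.

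The necessity is essentially formal. The invariant $\pisupp_G$ is additive on direct sums and summands, invariant under shift, subadditive on triangles, and satisfies $\pisupp_G(M\otimes_{k}X)\subseteq\pisupp_G(M)$ for every $G$-module $X$. A transfinite induction on the building process therefore yields $\pisupp_G(M)\subseteq\pisupp_G(N)$ whenever $M$ is built from $N$.

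The sufficiency is the substantive half, and is where stratification enters. By the classification theorem of the paper, the assignment
\[
\mcC\;\longmapsto\;\bigcup_{X\in\mcC}\pisupp_G(X)
\]
is a bijection from tensor-ideal localising subcategories of $\StMod G$ onto the subsets of the space of $\pi$-points of $G$. Under this bijection, $\Loc^{\otimes}(N)$ corresponds to $\pisupp_G(N)$: its support contains $\pisupp_G(N)$, and it is contained in the tensor-ideal localising subcategory of all objects whose $\pi$-support lies in $\pisupp_G(N)$, so the two supports coincide. Consequently, if $\pisupp_G(M)\subseteq\pisupp_G(N)$, then $M$ lies in this latter subcategory, hence in $\Loc^{\otimes}(N)$, and so $M$ can be built from $N$.

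The main obstacle is not local to this corollary: it is the stratification theorem itself, which constitutes the technical core of the paper. Once stratification is granted, the deduction above amounts to a bookkeeping exercise with the standard properties of $\pisupp_G$. The only minor subtlety is to match the concrete list of building operations (syzygies, extensions, sums, summands, tensoring with simples) with the abstract closure operations defining a tensor-ideal localising subcategory of $\StMod G$; this relies on the fact that the simples generate $\mod G$ inside $\StMod G$ under triangles and direct sums.
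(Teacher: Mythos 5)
Your proposal is correct and follows essentially the same route as the paper: identify the modules built out of $N$ with the objects of $\Loc^{\otimes}_{G}(N)$, and then invoke the stratification theorem (Theorem~\ref{th:stratification}) to convert the support inclusion into membership in that subcategory. The paper's proof is terser (it cites an earlier reference for the translation from the concrete building operations to $\Loc^{\otimes}_{G}(N)$ and then appeals to the classification theorem), but the argument is the same.
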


Here $\pisupp_{G}(M)$ denotes the $\pi$-support of $M$ introduced by Friedlander and Pevtsova~\cite{Friedlander/Pevtsova:2007a}, and recalled in Section~\ref{se:pi-points}. It is a subset of the space of $\pi$-points of $G$ and the latter is homeomorphic to $\Proj H^*(G,k)$. Recall that $H^{*}(G,k)$, the cohomology algebra of $G$, is a finitely generated graded-commutative $k$-algebra, by a result of Friedlander and Suslin~\cite{Friedlander/Suslin:1997a}. Thus, $\pisupp_{G}(M)$ may be seen as an algebro-geometric portrait of $M$ and the gist of the  theorem  is that this is fine enough to capture at least homological aspects of the $G$-module $M$. 

The proper context for the result above is $\StMod G$, the stable module category of $G$, and $\KInj{G}$, the homotopy category of
complexes of injective $G$-modules. These are compactly generated triangulated categories that inherit the tensor product of $G$-modules. We deduce Corollary~\ref{co:modules} from an essentially equivalent statement, Theorem~\ref{th:stratification}, that gives a classification of the tensor ideal localising subcategories of $\StMod G$. See Corollary~\ref{co:stratification} for a version dealing with $\KInj G$.

There are many known consequences of such classification results. One is a proof of the ``Telescope Conjecture'' for $\StMod G$ in Theorem~\ref{th:smashing}. Another is a characterisation of when there is a nonzero map in $\StMod G$ between $G$-modules $M$ and $N$, in terms of the $\pi$-support of $M$ and the $\pi$-cosupport of $N$ described further below. This last result is a generalisation, to not necessarily finite dimensional modules, of the fact that when $G$ is unipotent, and $M$ and $N$ are finite dimensional $G$-modules, one has
\[
\Ext^{i}_{G}(M,N)=0 \quad \text{for $i\gg 0$} \qquad \iff \qquad  \Ext^{i}_{G}(N,M)=0 \quad\text{for $i\gg 0$}\,.
\]
See Theorem~\ref{th:last} and the remark following that result. Over a general ring $R$, even one that is self-injective as the group algebra of a finite group scheme is, there is no correlation between $\Ext^{*}_{R}(M,N)$ and $\Ext^{*}_{R}(N,M)$, so this symmetry in the vanishing of Ext is surprising. This phenomenon was first discovered by Avramov and Buchweitz~\cite{Avramov/Buchweitz:2000a} when $R$ is a (commutative) complete intersection ring, and is now understood to be related to the classification of localising subcategories~\cite{Benson/Iyengar/Krause:2011a}.

When $M$ and $N$ are finite dimensional, $M$ is built of out $N$ only if it is \emph{finitely} built out of $N$, meaning that one needs only finite direct sums in the building process.  Consequently, the results mentioned in the preceding paragraph yield a classification of the tensor ideal thick subcategories of $\stmod G$ and $\sfD^{\sfb}(\mod G)$, the stable module category and the bounded derived category, respectively, of finite dimensional modules. This is because these categories are equivalent to the subcategories of  compact objects of $\StMod G$ and $\KInj{G}$, respectively.

\subsection*{A brief history}
The genesis of such results is a classification theorem of Devinatz,
Hopkins, and Smith for the stable homotopy category of finite spectra
\cite{Devinatz/Hopkins/Smith:1988a}. Classification theorems for other
``small'' categories followed: see Hopkins~\cite{Hopkins:1987a} and
Neeman~\cite{Neeman:1992a} for perfect complexes over a commutative
noetherian ring; Thomason~\cite{Thomason:1997a} for perfect complexes
of sheaves over a quasi-compact, quasi-separated scheme; Benson,
Carlson, and Rickard~\cite{Benson/Carlson/Rickard:1997a} for finite
dimensional modules of a finite group, as well as many more recent
developments. Our results cover not only finite dimensional modules,
but also the ``big'' category of all $G$-modules, so the closest
precursor is Neeman's classification~\cite{Neeman:1992a} for all
complexes over a commutative noetherian ring. An analogous statement
for group schemes arising from finite groups is proved
in~\cite{Benson/Iyengar/Krause:2011b}. Theorem~\ref{th:stratification}
is new for all other classes of finite groups schemes.

\subsection*{Structure of the proof}
Arbitrary finite group schemes  lack many of the structural properties of finite groups, as we explain further below. Consequently the methods we use in this work are fundamentally different from the ones that lead to the successful resolution of the classification problem for finite groups in ~\cite{Benson/Iyengar/Krause:2011b}.  In fact, our proof of Theorem~\ref{th:stratification} provides another proof for the case of finite groups. The two new ideas developed and exploited in this work are that of $\pi$-cosupport of a $G$-module introduced in \cite{Benson/Iyengar/Krause/Pevtsova:2015a}, and a technique of reduction to closed points that enhances a local to global principle from \cite{Benson/Iyengar/Krause:2011a, Benson/Iyengar/Krause:2011b}.

For a finite group $G$, the proof of the classification theorem given in \cite{Benson/Iyengar/Krause:2011b} proceeds by a reduction to the case of elementary abelian groups. This hinges  on Chouinard's theorem that a $G$-module is projective if and only if its restriction 
to all elementary abelian subgroups of $G$ is projective.  Such a reduction is an essential step also in a second proof of the classification theorem for finite groups described in~\cite{Benson/Iyengar/Krause/Pevtsova:2015a}. For general finite group schemes there is no straightforward replacement for a detecting family of subgroups akin to elementary abelian subgroups of finite groups: for any such family one needs to allow scalar extensions. See Example~\ref{ex:sl2} and the discussion around Corollary~\ref{co:Chouinard}.  

The first crucial step in the proof of the classification theorem is to verify that $\pi$-support detects projectivity: 

\begin{theorem*}[Theorem~\ref{th:main}]
Any  $G$-module $M$ with $\pisupp_G(M)=\varnothing$ is projective.
\end{theorem*}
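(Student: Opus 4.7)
The plan is to combine an enhanced local-to-global principle with the theory of $\pi$-cosupport in order to reduce the statement to a single closed point of $\Proj H^{*}(G,k)$, and then translate the projectivity of a $\pi$-point pullback into the vanishing of a local cohomology functor.

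First, pass to a sufficiently large algebraically closed extension $K/k$: since the formation of $\pi$-support is compatible with scalar extension and $K\otimes_{k}-$ is faithfully flat, it suffices to treat the case where $k$ is algebraically closed. Second, invoke the Benson/Iyengar/Krause framework, which equips $\StMod G$ with local cohomology functors $\gamma_{\fp}$ attached to the central action of $H^{*}(G,k)$, and the enhanced local-to-global principle highlighted in the introduction: $M\simeq 0$ in $\StMod G$ (i.e.\ $M$ is projective) if and only if $\gamma_{\fm}M\simeq 0$ for every closed point $\fm$ of $\Proj H^{*}(G,k)$. It therefore suffices to fix such a closed point $\fm$ and show that $\gamma_{\fm}M\simeq 0$. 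Third, because $k$ is algebraically closed, represent $\fm$ by a $\pi$-point $\alpha\colon k[t]/(t^{p})\to kG$; the hypothesis $\pisupp_{G}(M)=\varnothing$ then says that $\alpha^{*}(M)$ is projective over $k[t]/(t^{p})$, and likewise for every $\pi$-point, over any field extension, that represents $\fm$.

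The heart of the argument is the fourth step: upgrading the projectivity of these pullbacks to the vanishing of $\gamma_{\fm}M$. Here the new tool of $\pi$-cosupport, developed in \cite{Benson/Iyengar/Krause/Pevtsova:2015a}, intervenes. One exploits the facts that $\picosupp_{G}$ detects the vanishing of an object, that $\picosupp_{G}(\gamma_{\fm}M)\subseteq\{\fm\}$ because $\gamma_{\fm}$ is the $\fm$-local part, and that $\fm\in\picosupp_{G}(\gamma_{\fm}M)$ would force the existence of a $\pi$-point $\beta$ (over some extension) above $\fm$ along which the pullback of $\gamma_{\fm}M$—and hence of $M$—is not ``co-projective'', contradicting the projectivity of $\alpha^{*}(M)$ via the tensor compatibility of $\pi$-points with the Koszul object $k\kos{}{\fm}$. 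This yields $\picosupp_{G}(\gamma_{\fm}M)=\varnothing$, and therefore $\gamma_{\fm}M\simeq 0$.

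The main obstacle lies precisely in that fourth step: bridging the purely $\pi$-point–local statement (projectivity of $\alpha^{*}(M)$ for each $\pi$-point) to the global, BIK-local statement (vanishing of $\gamma_{\fm}M$). For finite groups this bridge was furnished by Chouinard's theorem together with a reduction to elementary abelian subgroups, but, as noted in the introduction, no detecting family of subgroups is available for general finite group schemes. The substitute is the combination of $\pi$-cosupport with the reduction to closed points: $\pi$-cosupport provides the symmetry/duality needed to pair $\pi$-point pullback data with local cohomology, and the reduction to closed points avoids the generic-point complications where $\pi$-support and triangulated support can diverge.
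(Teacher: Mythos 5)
Your proposal has a fatal flaw at step two: the ``local-to-global principle'' you invoke is false as stated. The actual BIK local-to-global principle asserts $M\simeq 0$ in $\StMod G$ if and only if $\gam_{\fp}M\simeq 0$ for \emph{every} $\fp\in\Proj H^{*}(G,k)$, not merely for closed points. To see that closed points alone do not suffice, take $\fp$ a non-closed point and $M=\gam_{\fp}k$: then $\supp_{G}(M)=\{\fp\}$, so $\gam_{\fm}M\simeq 0$ for every closed point $\fm$, yet $M\not\simeq 0$. The reduction to closed points that the paper highlights in its introduction is a reduction for establishing \emph{minimality} of the subcategories $\gam_{\fp}(\StMod G)$, accomplished by extending scalars to a transcendental extension $K/k$ so that a given $\fp$ acquires a closed point of $\Proj H^{*}(G_K,K)$ lying over it (Theorem~\ref{th:generic-point}); it is not a detection principle at closed points over the ground field.

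The fourth step is also circular. You cite ``the fact that $\picosupp_G$ detects the vanishing of an object,'' but in the paper this is Corollary~\ref{co:maxsupp=maxcosupp}, which is deduced from $\picosupp_G=\cosupp_G$ (Theorem~\ref{th:picosupp=bik}), which in turn rests on the stratification theorem, which rests on Theorem~\ref{th:main} itself. Remark~\ref{re:cosupp} explains exactly why no direct proof of $\pi$-cosupport detection is available: the coextension $M\mapsto M^{K}$ does not commute with induction for general affine group schemes, so the step handling Frobenius kernels (Theorem~\ref{th:lpt}) has no $\pi$-cosupport analogue. What your outline calls the ``heart of the argument'' — passing from projectivity of $\alpha^{*}(M)$ for every $\pi$-point $\alpha$ to vanishing of $\gam_{\fm}M$ — is precisely the content that the paper's Part~\ref{part:detection} supplies by entirely different means: the quasi-elementary case (Theorem~\ref{thm:Dade}), the connected unipotent case (Theorem~\ref{th:unip}), descent along embeddings into Frobenius kernels using Suslin's nilpotence detection theorem (Theorems~\ref{th:suslin}, \ref{th:subgroup}, \ref{th:lpt}), and the subgroup reduction principle (Theorem~\ref{th:principle}). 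None of that appears in the proposal, and the gesture towards ``tensor compatibility of $\pi$-points with the Koszul object'' does not substitute for it.
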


The essence of this detection theorem is that projectivity is detected locally on $\pi$-points of $G$.  Thus the geometric portrait of modules given by $\pi$-support is a faithful invariant on $\StMod G$.  This result is an ultimate generalisation, to arbitrary finite group schemes and to infinite dimensional modules, of Dade's Lemma~\cite{Dade:1978b} that the projectivity of a finite dimensional module over an elementary abelian group is detected on cyclic shifted subgroups.  The proof of the detection theorem builds on the work of many authors. For finite groups, Dade's Lemma was generalised to infinite dimensional modules by Benson, Carlson, and Rickard~\cite{Benson/Carlson/Rickard:1997a}.  For connected finite groups schemes the analogue of Dade's Lemma is that projectivity can be detected by restriction to  one-parameter subgroups, and was proved in a series of papers by Suslin, Friedlander, and Bendel~\cite{Bendel/Friedlander/Suslin:1997b},  Bendel~\cite{Bendel:2001a}, and Pevtsova~\cite{Pevtsova:2002a,Pevtsova:2004a}.

There is a flaw in the proof of the detection theorem, Theorem~\ref{th:main}, given in \cite[Theorem~5.3]{Friedlander/Pevtsova:2007a},  as we explain in Remark~\ref{re:fpnot}. For this reason, Part~\ref{part:detection} of this paper is devoted to a proof  of this result. Much of the argument is already available in the literature but is spread across various places. The new idea that allowed us to repair the proof appears in a ``subgroup reduction principle", Theorem~\ref{th:principle}, which also led to some simplifications of the known arguments.  

As a consequence of the $\pi$-support detection theorem we prove:

\begin{theorem*}[Theorem~\ref{th:pisupp=bik}]
For any $G$-module $M$ there is an equality
\[
\pisupp_{G}(M) = \supp_{G}(M)\,.
\]
\end{theorem*}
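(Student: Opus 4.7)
The plan is to reduce the equality $\pisupp_G(M) = \supp_G(M)$ to the detection theorem (Theorem~\ref{th:main}) via the local--global machinery of Benson--Iyengar--Krause. Recall that the BIK support $\supp_G$ on $\StMod G$ is defined through local cohomology functors $\gam_\fp$ associated to the action of $H^{*}(G,k)$: one has $\fp \in \supp_G(M)$ if and only if $\gam_\fp M \neq 0$ in $\StMod G$; moreover $\supp_G(\gam_\fp M) \subseteq \{\fp\}$, and $\supp_G(M) = \varnothing$ if and only if $M$ is projective.

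The first step is the easy inclusion $\pisupp_G(M) \subseteq \supp_G(M)$, which should be essentially a reformulation of the definition of Friedlander--Pevtsova. A $\pi$-point $\alpha_K \colon K[t]/t^p \to KG$ at $\fp$ factors cohomological information through $\fp$, so non-projectivity of $\alpha_K^{*}(M_K)$ produces a class in $H^{*}(G,k)$ outside $\fp$ acting nontrivially on some $\Ext$ of $M$, forcing $\gam_\fp M \neq 0$. This is standard for finite dimensional $M$ (where $\supp_G$ agrees with the usual cohomological support variety) and extends to arbitrary $M$ by flatness of $\alpha_K$ and faithful flatness of the scalar extensions appearing in the definition of $\pisupp_G$.

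For the reverse inclusion $\supp_G(M) \subseteq \pisupp_G(M)$, let $\fp \in \supp_G(M)$, so $\gam_\fp M$ is not projective in $\StMod G$. Applying Theorem~\ref{th:main} to $\gam_\fp M$ gives $\pisupp_G(\gam_\fp M) \neq \varnothing$; combined with the easy inclusion already established (applied to $\gam_\fp M$) and the BIK locality $\supp_G(\gam_\fp M) \subseteq \{\fp\}$, we obtain $\pisupp_G(\gam_\fp M) = \{\fp\}$. It then suffices to show the monotonicity principle $\pisupp_G(\gam_\fp M) \subseteq \pisupp_G(M)$, which will force $\fp \in \pisupp_G(M)$ and complete the proof.

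The main obstacle is this monotonicity step: transferring non-emptiness of $\pisupp_G(\gam_\fp M)$ at $\fp$ back to $\pisupp_G(M)$. Since $\gam_\fp M$ is built from $M$ by tensoring with a cohomological Koszul-type object and inverting elements outside $\fp$, the cleanest route is to establish a tensor product inclusion $\pisupp_G(M \otimes N) \subseteq \pisupp_G(M) \cap \pisupp_G(N)$ together with $\pisupp_G(\gam_\fp k) = \{\fp\}$, both of which should be accessible from the definition of $\pi$-support via exactness and base-change. Granting these, the monotonicity is automatic, and the equality $\pisupp_G(M) = \supp_G(M)$ follows.
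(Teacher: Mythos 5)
Your plan converges on the right three ingredients, which are exactly what the paper uses: the detection theorem (Theorem~\ref{th:main}), the tensor product formula $\pisupp_G(M\otimes_k N)=\pisupp_G(M)\cap\pisupp_G(N)$ from Theorem~\ref{th:tensor-and-hom-pi}, and the computation $\pisupp_G(\gam_\fp k)=\{\fp\}$ (which the paper cites from Friedlander--Pevtsova, Prop.~6.6). Your treatment of the reverse inclusion is sound, and your observation that $\gam_\fp M \cong M\otimes_k\gam_\fp k$ (so monotonicity follows from the tensor formula) is exactly the right mechanism.

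However, the second paragraph, where you assert the inclusion $\pisupp_G(M)\subseteq\supp_G(M)$ is ``essentially a reformulation of the definition,'' is not correct as stated and hides a real gap. Non-projectivity of $\alpha_\fp^*(M_K)$ does not directly ``produce a class outside $\fp$ acting nontrivially'' or otherwise imply $\gam_\fp M\neq 0$; remember $\gam_\fp M \neq 0$ means $\sHom^*_G(C,M)$ has nonzero $\fp$-local $\fp$-torsion part for some compact $C$, and there is no short argument connecting this with the behaviour of $M$ under restriction along $\alpha_\fp$. Nor does the assertion transfer from finite dimensional modules to arbitrary ones by ``flatness'': the BIK support of an infinite-dimensional module is not computed from annihilators, so the flatness of $\alpha_\fp$ gives you nothing here. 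What actually proves this inclusion is the \emph{same} formal computation you use for the other direction:
$\pisupp_G(\gam_\fp M)=\pisupp_G(M\otimes_k\gam_\fp k)=\pisupp_G(M)\cap\{\fp\}$,
so if $\fp\in\pisupp_G(M)$ then $\pisupp_G(\gam_\fp M)=\{\fp\}\neq\varnothing$, whence $\gam_\fp M$ is not projective (this uses only the trivial direction of the detection theorem, namely flatness of $\pi$-points) and $\fp\in\supp_G(M)$. In other words, the argument is symmetric: the one identity $\pisupp_G(\gam_\fp M)=\pisupp_G(M)\cap\{\fp\}$ combined with both directions of Theorem~\ref{th:main} gives the equality at once. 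You should drop the informal argument of your second paragraph and run this formal calculation in both directions; once you do, your proof coincides with the paper's.

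One further caution: you describe the tensor formula and $\pisupp_G(\gam_\fp k)=\{\fp\}$ as ``accessible from the definition via exactness and base-change.'' They are the correct inputs, but neither is trivial: the tensor formula requires the reduction to quasi-elementary subgroup schemes and the Hopf-algebra manipulation in Theorem~\ref{th:tensor-and-hom-pi}, and $\pisupp_G(\gam_\fp k)=\{\fp\}$ is a substantive result of Friedlander--Pevtsova. You should cite them rather than suggest they can be re-derived on the spot.
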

Here $\supp_{G}(M)$ is the support of $M$ defined in \cite{Benson/Iyengar/Krause:2008a} using the action of $H^{*}(G,k)$ on $\StMod G$,  recalled in Section~\ref{se:bik}. This allows us to apply the machinery developed in \cite{Benson/Iyengar/Krause:2008a, Benson/Iyengar/Krause:2011a, Benson/Iyengar/Krause:2011b}. The first advantage that we  reap from this is the following \emph{local to global principle}: for the desired classification it suffices to verify that for each point $\fp$ in $\Proj H^{*}(G,k)$ the subcategory of $\StMod G$ consisting  of modules with support in $\fp$ is \emph{minimal}, in that it has no proper tensor ideal localising subcategories. The latter statement is equivalent to the category $\StMod G$ being {\em stratified} by $H^{*}(G,k)$ as we explain towards the end of Section~\ref{se:bik}. This is tantamount to proving that when $M,N$  are $G$-modules whose support equals $\{\fp\}$, the $G$-module  of homomorphisms $\Hom_{k}(M,N)$ is not projective.

When $\fp$ is a closed point in $\Proj H^{*}(G,k)$, we verify this by using a new invariant of $G$-modules 
called $\pi$-cosupport introduced in \cite{Benson/Iyengar/Krause/Pevtsova:2015a}, and recalled in 
Section~\ref{se:pi-points}. Its relevance to the problem on hand stems from the equality below; see 
Theorem~\ref{th:tensor-and-hom-pi}.
\[
\picosupp_G(\Hom_k(M,N)) =\pisupp_G(M)\cap\picosupp_G(N)
\]

The minimality for a general point $\fp$ in $\Proj H^{*}(G,k)$ is established by a reduction to the case  of closed points. To this end, in Section~\ref{se:passage-to-closed-points} we develop a technique that  mimics the construction of generic points for irreducible algebraic varieties in the classical theory of Zariski and Weil. The results from commutative algebra that are required for this last step are established 
in Section~\ref{se:generic-points}.  The ideas in these sections are an elaboration of the local  to global principle alluded to above. 

\subsection*{Applications}
One of the many known consequences of Theorem~\ref{th:stratification}  is a classification  of the tensor-ideal thick subcategories of $\stmod G$, anticipated in \cite{Hovey/Palmieri:2001a}, and of $\sfD^{\sfb}(\mod G)$. This was mentioned earlier and is the content of Theorem~\ref{th:thick} and Corollary~\ref{co:stratification}. A few others are described in Section~\ref{se:stratification}. The results in this work also yield a precise criterion for deciding when there is a nonzero map between $G$-modules $M$ and $N$, at least when $G$ is unipotent; see Theorem~\ref{th:last}. Further applications specific to the context of finite group schemes are treated in Section~\ref{se:cosupport}. These include a proof that, akin to supports, the $\pi$-cosupport of a $G$-module coincides with its cosupport in the sense of \cite{Benson/Iyengar/Krause:2012b}: 

\begin{theorem*}[Theorem~\ref{th:picosupp=bik}]
For any $G$-module $M$ there is an equality
\[
\picosupp_{G}(M) = \cosupp_{G}(M)\,.
\]
\end{theorem*}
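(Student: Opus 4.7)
The plan is to adapt the argument of Theorem~\ref{th:pisupp=bik} to cosupport. The three main ingredients are the identity $\pisupp_G=\supp_G$ already at hand from Theorem~\ref{th:pisupp=bik}, the tensor--Hom formula
\[
\picosupp_G(\Hom_k(M,N))=\pisupp_G(M)\cap\picosupp_G(N)
\]
from Theorem~\ref{th:tensor-and-hom-pi}, and its BIK counterpart
\[
\cosupp_G(\Hom_k(M,N))=\supp_G(M)\cap\cosupp_G(N),
\]
which is available in any tensor triangulated category stratified by a graded-commutative coefficient ring. Using the classification theorem (Theorem~\ref{th:stratification}), choose for each $\fp\in\Proj H^*(G,k)$ a $G$-module $T_\fp$ with $\supp_G(T_\fp)=\pisupp_G(T_\fp)=\{\fp\}$. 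Substituting $T_\fp$ into the two displayed formulas collapses the right-hand sides and produces the equivalences
\[
\fp\in\cosupp_G(M)\iff\cosupp_G(\Hom_k(T_\fp,M))\neq\varnothing,
\]
\[
\fp\in\picosupp_G(M)\iff\picosupp_G(\Hom_k(T_\fp,M))\neq\varnothing,
\]
so the theorem reduces to the assertion that, for every $G$-module $X$, both $\cosupp_G(X)$ and $\picosupp_G(X)$ are empty exactly when $X$ is projective.

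For $\cosupp_G$ this equivalence is built into the BIK formalism, and for $\picosupp_G$ one direction is immediate from the definition of $\pi$-cosupport by restriction along $\pi$-points. The main obstacle is the remaining direction: a module with empty $\pi$-cosupport must be projective. I would prove it by combining the detection theorem for $\pi$-support (Theorem~\ref{th:main}) with the Frobenius structure on $kG$. Concretely, for a $\pi$-point $\alpha\colon K[t]/t^p\to KG$ the restriction $\alpha^*$ admits both a left and a right adjoint, and over the self-injective ring $K[t]/t^p$ the non-projectivity of $\alpha^*M$ is detected equally well by a non-vanishing cycle (which feeds $\pisupp_G$) or a non-vanishing socle element (which feeds $\picosupp_G$); the identification $\Hom_k(Y,X)\cong Y^\vee\otimes_k X$ as $G$-modules then converts the tensor--Hom formula for $\picosupp_G$ into a statement controlled by the tensor formula for $\pisupp_G$. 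To globalise this pointwise comparison beyond finite-dimensional modules and closed points, I would appeal to the reduction-to-closed-points principle of Section~\ref{se:passage-to-closed-points} together with the generic-point construction in Section~\ref{se:generic-points}, exactly as in the last step of the proof of Theorem~\ref{th:pisupp=bik}. Once detection for $\picosupp_G$ is established, the two chains of equivalences above close up and yield $\picosupp_G(M)=\cosupp_G(M)$ for arbitrary $M$.
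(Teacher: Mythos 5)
Your proposal reduces the theorem to two detection statements, namely that $\cosupp_{G}(X)=\varnothing$ and $\picosupp_{G}(X)=\varnothing$ are each equivalent to $X$ being projective. The first is indeed part of the basic cosupport formalism; the second, however, is precisely the hard claim, and your sketch of it does not engage with the real obstruction. Remark~\ref{re:cosupp} explains that the proof strategy for Theorem~\ref{th:main} cannot be adapted to $\picosupp$: the Frobenius-kernel step (Theorem~\ref{th:lpt}) breaks because induction does not commute with coextension of scalars for affine group schemes, and $M^K$ need not even carry a $G_K$-action when $K/k$ is infinite. Your argument about ``non-vanishing socle elements'' and the identification $\Hom_k(Y,X)\cong Y^\vee\otimes_k X$ does not resolve this, since that identification requires $Y$ finite-dimensional while $\picosupp$ is defined via $\Hom_k(K,M)$ with $K$ typically of infinite degree. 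In fact, the statement that $\picosupp_{G}(M)=\varnothing$ forces $M$ projective appears in the paper only as Corollary~\ref{co:maxsupp=maxcosupp}, and its proof cites Theorem~\ref{th:picosupp=bik} itself, so invoking it here would be circular.

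There is a second gap: the ``BIK counterpart'' formula $\cosupp_{G}(\Hom_k(M,N))=\supp_{G}(M)\cap\cosupp_{G}(N)$ is not available for arbitrary $M$ merely from stratification. It holds when $M$ is compact, and in full generality it is essentially the statement that $\StMod G$ is costratified, which is the content of the subsequent paper \cite{Benson/Iyengar/Krause/Pevtsova:2015c} and itself relies on the present theorem. But the modules $T_\fp$ you need, with $\supp_G(T_\fp)=\{\fp\}$, are $\fp$-local and $\fp$-torsion and hence not compact when $\fp$ is not a closed point, so the compact case does not apply.

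The paper's actual route avoids both difficulties. It produces, for each $\pi$-point $\alpha$ over $\fp$, the concrete $G$-module $\alpha_*(K)\da_G$, shows $\supp_G(\alpha_*(K)\da_G)=\{\fp\}$ (Lemma~\ref{le:supp-alpha_*}), and shows via a chain of adjunctions that $\fp\in\picosupp_G(M)$ if and only if $\Hom_k(\alpha_*(K)\da_G,M)$ is not projective (Lemma~\ref{le:pi-cosupp-alpha_*}). Stratification (Theorem~\ref{thm:minimality}) then implies $\alpha_*(K)\da_G$ and $\gam_\fp k$ generate the same tensor ideal localising subcategory, so non-projectivity of $\Hom_k(\alpha_*(K)\da_G,M)$ is equivalent to non-projectivity of $\Hom_k(\gam_\fp k,M)$, which is the definition of $\fp\in\cosupp_G(M)$. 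This never requires a detection theorem for $\picosupp$ nor a cosupport tensor--Hom formula for non-compact arguments. You should look for an object like $\alpha_*(K)\da_G$ that bridges the two notions of cosupport, rather than trying to replicate the detection argument for $\picosupp$ directly.
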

This in turn is used to track support and cosupport under restriction and induction for subgroup schemes;  see Proposition~\ref{pr:ind}.  That the result above is a consequence of the classification theorem also illustrates a key difference between the approach developed in this paper and the one in \cite{Benson/Iyengar/Krause/Pevtsova:2015a} where we give a new proof of the classification theorem for finite groups. There we prove that $\pi$-cosupport coincides with cosupport, using linear algebra methods and special properties of finite groups, and deduce the classification theorem from it. In this paper our route is the opposite: we have to develop a new method to prove classification and then deduce the equality of cosupports from it. See also Remark~\ref{re:cosupp}. 

The methods developed in this work have led to other new results concerning the structure of the stable module category of a finite
group scheme, including a classification of its Hom closed  \emph{colocalising} subcategories~\cite{Benson/Iyengar/Krause/Pevtsova:2015c}, and to a type of local Serre duality theorem for $\StMod G$; see~\cite{Benson/Iyengar/Krause/Pevtsova:2016a}.

\part{Recollections}
\label{part:recollections} There have been two, rather different,
approaches to studying representations of finite groups and finite
groups schemes using geometric methods: via the theory of $\pi$-points
and via the action of the cohomology ring on the stable category. Both
are critical for our work.  In this part we summarise basic
constructions and results in the two approaches.

\section{$\pi$-support and $\pi$-cosupport}
\label{se:pi-points}
In this section we recall the notion of $\pi$-points for finite group schemes. The primary references are the papers of Friedlander and
Pevtsova \cite{Friedlander/Pevtsova:2005a,Friedlander/Pevtsova:2007a}. We begin by summarising basic properties of modules over affine group schemes; for details we refer the reader to Jantzen \cite{Jantzen:2003a} and Waterhouse \cite{Waterhouse:1979a}.

\subsection*{Affine group schemes and their representations} Let $k$
be a field and $G$ a group scheme over $k$; this work concerns only
\emph{affine} group schemes. The coordinate ring of $G$ is denoted
$k[G]$; it is a commutative Hopf algebra over $k$. One says that $G$
is \emph{finite} if $k[G]$ is finite dimensional as a $k$-vector
space.  The $k$-linear dual of $k[G]$ is then a cocommutative Hopf
algebra, called the \emph{group algebra} of $G$, and denoted $kG$.  A
finite group scheme $G$ over a field $k$ is \emph{connected} (or
\emph{infinitesimal}) if its coordinate ring $k[G]$ is local; it is
\emph{unipotent} if its group algebra $kG$ is local.

\begin{example}[Finite groups]
  \label{ex:finite-groups} 
A finite group $G$ defines a finite group scheme over any field $k$: The group algebra $kG$ is a finite dimensional cocommutative Hopf algebra, hence its dual is a commutative Hopf algebra and so defines a group scheme over $k$; it is also denoted $G$.  A finite group $E$ is \emph{elementary abelian} if it is isomorphic to $(\bbZ/p)^{r}$, for some prime number $p$.  The integer $r$ is then the \emph{rank} of $E$. Over a field $k$ of characteristic $p$, there are isomorphisms of $k$-algebras
\[ 
k[E]\cong k^{\times {p^{r}}}\quad\text{and}\quad kE\cong k[z_1,\dots,z_{r}]/(z_1^p,\dots,z_{r}^p).
\]
\end{example}

\begin{example}[Frobenius kernels]
\label{ex:frobenius-kernels} 
Let $k$ be a field of positive characteristic $p$ and $f\colon k\to k$ its Frobenius endomorphism; thus $f(\lambda)=\lambda^p$.  The \emph{Frobenius twist} of a commutative $k$-algebra $A$ is the base change $A^{(1)} := k\otimes_f A$ over the Frobenius map. There is a $k$-linear algebra map $F_A\colon A^{(1)}\to A$ given by $F_A(\lambda \otimes a) = \lambda a^p$. 

If $G$ is a group scheme over $k$, then the Frobenius twist $k[G]^{(1)}$ is again a Hopf algebra over $k$ and therefore defines another group scheme $G^{(1)}$ called the \emph{Frobenius twist} of $G$. The algebra map $F_{k[G]} \colon k[G^{(1)}] = k[G]^{(1)}\to k[G]$ induces the Frobenius map of group schemes $F \colon G\to G^{(1)}$. The \emph{$r$th Frobenius kernel} of $G$ is the group scheme theoretic kernel of the $r$-fold iteration of the Frobenius map:
 \[ 
 G_{(r)} =\Ker (F^r \colon G\to G^{(r)}).
 \] 
The Frobenius kernel of $G$ is connected if the $k$-algebra $k[G]$ is finitely generated.

Let $\bbG_a$ denote the additive group over $k$. For the $r$-th Frobenius kernel $\bbG_{a(r)}$ there are isomorphism of $k$-algebras
\[ 
k[\bbG_{a(r)}]\cong k[t]/(t^{p^r}) \quad\text{and}\quad
k\bbG_{a(r)}\cong k[u_0,\dots,u_{r-1}]/(u_0^p,\dots,u_{r-1}^p).
\]
\end{example}

\begin{example}[Quasi-elementary group schemes]
  \label{ex:quasi-elementary} 
 Following Bendel~\cite{Bendel:2001a}, a group scheme is said to be \emph{quasi-elementary} if it is   isomorphic to $\bbG_{a(r)} \times (\bbZ/p)^s$. Thus a quasi-elementary group scheme is unipotent abelian and its group algebra structure is the same as that of an elementary abelian $p$-group of rank $r+s$. Note that any \emph{connected} quasi-elementary group scheme is isomorphic to $\bbG_{a(r)}$, for some $r$.
\end{example}

A module over an affine group scheme $G$ over $k$ is called \emph{$G$-module}; it is equivalent to a comodule over the Hopf algebra $k[G]$. The term ``module'' will mean ``left module'', unless stated otherwise. We write $\Mod G$ for the category of $G$-modules and $\mod G$ for its full subcategory consisting of finite dimensional $G$-modules. When $G$ is finite, we identify $G$-modules with modules over the group algebra $kG$; this is justified by \cite[I.8.6]{Jantzen:2003a}.  

\subsection*{Induction} For each subgroup scheme $H$ of $G$ restriction is a functor
\[ 
\res^G_H\colon \Mod G \lra \Mod H.
\] 
We often write $(-)\da_{H}$ instead of $\res^{G}_{H}$. This has a right adjoint called induction\footnote{Warning: in representation theory of finite groups, \emph {induction} is commonly used for the \emph{left adjoint}. We stick with the convention in  \cite{Jantzen:2003a} pointing out that for group schemes the left adjoint does not always exist and when it does, it is not necessarily isomorphic to the right adjoint.}
\[ 
\ind^G_H\colon \Mod H \lra \Mod G
\] 
as described in \cite[I.3.3]{Jantzen:2003a}. If the quotient $G/H$ is affine then $\ind^G_H$ is exact. This holds, for example, when $H$ is
finite; see \cite[I.5.13]{Jantzen:2003a}.

\subsection*{Extending the base field} 
Let $G$ be a group scheme over $k$.  If $K$ is a field extension of $k$, we write $K[G]$ for $K \otimes_k k[G]$, which is a commutative Hopf algebra over $K$. This defines a group scheme over $K$ denoted $G_K$. If $G$ is a finite group scheme, then there is a natural isomorphism $KG_K\cong K\otimes_k kG$ and we simplify notation by writing $KG$.  For a $G$-module $M$, we set
\[ 
M_K:=K \otimes_k M\,. 
\]
The induction functor commutes with the extension of scalars (see \cite[I.3.5]{Jantzen:2003a}), that is, there is a canonical
isomorphism:
\[ 
\ind^{G_K}_{H_K}(M_K) \cong (\ind^G_H M)_K\,.
\] 
The assignment $M\mapsto M_{K}$ defines a functor from $\Mod G$ to $\Mod G_{K}$ which is left adjoint to the restriction functor $ \Mod
G_K \to \Mod G$.

For $G$ a \emph{finite} group scheme and a $G$-module $M$ we set
\[ 
M^K := \Hom_k(K,M),
\] 
again viewed as a $G_K$-module. This is right adjoint to restriction.  It is essential for the group scheme to be finite to make sense of this definition; see Remark~\ref{re:cosupp}.

\subsection*{$\pi$-points} 
Let $G$ be a finite group scheme over $k$.  A $\pi$-\emph{point} of $G$, defined over a field extension $K$ of $k$,
is a morphism of $K$-algebras
\[ 
\alpha\colon K[t]/(t^p) \lra KG
\] 
that factors through the group algebra of a unipotent abelian subgroup scheme $C$ of $G_{K}$, and such that $KG$ is flat when viewed
as a left (equivalently, as a right) module over $K[t]/(t^{p})$ via $\alpha$. It should be emphasised that $C$ need not be defined over
$k$; see Example~\ref{ex:sl2}. Restriction along $\alpha$ defines a
functor
\begin{align*} 
\alpha^{*}&\colon \Mod G_K \lra \Mod K[t]/(t^{p}),\\
\intertext{and the functor $KG\otimes_{K[t]/(t^{p})} -$ provides a left adjoint} \alpha_* & \colon \Mod
K[t]/(t^{p}) \lra \Mod G_K.
\end{align*}

\begin{definition}
\label{de:pi} 
A pair of $\pi$-points $\alpha\colon K[t]/(t^p)\to KG$ and $\beta\colon L[t]/(t^p)\to LG$ are \emph{equivalent}, denoted $\alpha\sim\beta$, if they satisfy the following condition: for any finite dimensional $kG$-module $M$, the module $\alpha^*(M_K)$ is projective if and only if $\beta^*(M_L)$ is projective (see \cite[Section 2]{Friedlander/Pevtsova:2007a} for a discussion of the equivalence relation).
\end{definition}

\begin{remark}
\label{re:pi-basics} 
For ease of reference, we list some basic properties of $\pi$-points.

(1) Let $\alpha\colon K[t]/(t^p)\to KG$ be a $\pi$-point and $L$ a
field extension of $K$. Then $L\otimes_{K}\alpha\colon L[t]/(t^p)\to
LG$ is a $\pi$-point and it is easy to verify that $\alpha\sim L
\otimes_K \alpha$.

(2) Every $\pi$-point is equivalent to one that factors through some
quasi-elementary subgroup scheme.  This is proved in
\cite[Proposition~4.2]{Friedlander/Pevtsova:2005a}.

(3) Every $\pi$-point of a subgroup scheme $H$ of $G$ is naturally a
$\pi$-point of $G$.

(4) A $\pi$-point $\alpha$ of $G$ defined over $L$ naturally gives
rise to a $\pi$-point of $G_K$ defined over a field containing $K$ and
$L$.
\end{remark}

For quasi-elementary group schemes, one has a concrete description of $\pi$-points and the equivalence relation between them.

\begin{example}
\label{ex:pi-point}
Let $\mcE$ be a quasi-elementary group scheme defined over $k$ of positive characteristic $p$. For any field extension $K$ of $k$, the group algebra $K\mcE$ is isomorphic to the $K$-algebra $K[z_{1},\dots,z_{n}]/(z_{1}^{p},\dots, z_{n}^{p})$; see Example~\ref{ex:quasi-elementary}. Since $\mcE$ is unipotent abelian, a $\pi$-point defined over $K$ is nothing but a flat map of $K$-algebras
\[
\alpha\colon K[t]/(t^{p})\lra K[z_{1},\dots,z_{n}]/(z_{1}^{p},\dots, z_{n}^{p}).
\]
What is more, flatness of $\alpha$ is equivalent to the condition that $\alpha(t)$ has a linear part; see \cite[Proposition~2.2]{Friedlander/Pevtsova:2005a}. The same result also yields that $\pi$-points 
\[
\alpha,\beta\colon K[t]/(t^{p})\lra K[z_{1},\dots,z_{n}]/(z_{1}^{p},\dots, z_{n}^{p})
\]
are equivalent if and only if $\alpha(t)$ and $\beta(t)$ have proportional linear parts.
\end{example}

\subsection*{$\pi$-points and cohomology}
Let $G$ be a finite group scheme over $k$.  The cohomology of $G$ with
coefficients in a $G$-module $M$ is denoted $H^{*}(G,M)$. It can be
identified with $\Ext_{G}^{*}(k,M)$, with the trivial action of $G$ on
$k$.  Recall that $H^{*}(G,k)$ is a $k$-algebra that is
graded-commutative (because $kG$ is a Hopf algebra) and finitely
generated, by a theorem of
Friedlander--Suslin~\cite[Theorem~1.1]{Friedlander/Suslin:1997a}. 

Let $\Proj H^*(G, k)$ denote the set of homogeneous prime ideals $H^*(G, k)$ that are properly contained in the maximal ideal of
positive degree elements.

Given a $\pi$-point $\alpha\colon K[t]/(t^{p})\to KG$ we write $H^{*}(\alpha)$ for the composition of homomorphisms of $k$-algebras.
\[ 
H^{*}(G,k) = \Ext^{*}_{G}(k,k) \xra{\ K\otimes_{k}-} \Ext^{*}_{G_K}(K,K) \lra \Ext^{*}_{K[t]/(t^{p})}(K,K),
\] 
where the one on the right is induced by restriction. Evidently, the radical of the ideal $\Ker H^{*}(\alpha)$ is a prime ideal in
$H^*(G,k)$ different from $H^{\ges 1}(G,k)$ and so defines a point in $\Proj H^{*}(G,k)$.

\begin{remark}
\label{re:generic-points} Fix a point $\fp$ in $\Proj H^{*}(G,k)$. There exists a field $K$ and a $\pi$-point
\[ 
\alpha_\fp\colon K[t]/(t^p)\lra KG
\] 
such that $\sqrt{\Ker H^{*}(\alpha_{\fp})}=\fp$. In fact, there is such a $K$ that is a finite extension of the degree zero part of the
homogenous residue field at $\fp$; see \cite[Theorem~4.2]{Friedlander/Pevtsova:2007a}. It should be emphasised that $\alpha_{\fp}$ is not uniquely defined.
\end{remark}

In this way, one gets a bijection between the set of equivalence classes of $\pi$-points of $G$ and $\Proj H^*(G,k)$; see \cite[Theorem~3.6]{Friedlander/Pevtsova:2007a}.  In the sequel, we identify a prime in $\Proj H^{*}(G,k)$ and the corresponding equivalence class of $\pi$-points.

The following definitions of $\pi$-support and $\pi$-cosupport of a $G$-module $M$ are from \cite{Friedlander/Pevtsova:2007a} and
\cite{Benson/Iyengar/Krause/Pevtsova:2015a} respectively.

\begin{definition}
\label{de:cosupport} Let $G$ be a finite group scheme and $M$ be a
$G$-module.  The \emph{$\pi$-support} of $M$ is the subset of $\Proj
H^{*}(G,k)$ defined by
\[ 
\pisupp_{G}(M) := \{\fp\in\Proj H^*(G,k) \mid \text{$\alpha_\fp^*(K\otimes_k M)$ is not projective}\}.
\] 
The \emph{$\pi$-cosupport} of $M$ is the subset of $\Proj H^{*}(G,k)$ defined by
\[ 
\picosupp_{G}(M) := \{\fp\in\Proj H^*(G,k) \mid \text{$\alpha_\fp^*(\Hom_k(K,M))$ is not projective}\}.
\] 
Here $\alpha_\fp\colon K[t]/(t^p)\to KG$ denotes a representative of the equivalence class of $\pi$-points corresponding to $\fp$; see
Remark~\ref{re:generic-points}. Both $\pisupp$ and $\picosupp$ are well defined on the equivalence classes of $\pi$-points; see
\cite[Theorem 3.1]{Benson/Iyengar/Krause/Pevtsova:2015a}.
\end{definition}

The following observation will be useful; see Corollary~\ref{co:maxsupp=maxcosupp} for a better statement.

\begin{lemma}
\label{le:maxsupp=maxcosupp} 
Let $\fm$ be a closed point of $\Proj H^{*}(G,k)$ and $M$ a $G$-module. Then $\fm$ is in $\pisupp_{G}(M)$ if and only if it is in $\picosupp_{G}(M)$.
\end{lemma}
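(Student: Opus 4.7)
The plan is to reduce to the case where a $\pi$-point representative of $\fm$ is defined over a finite extension $K$ of $k$, and then produce a $G_K$-module isomorphism $K\otimes_k M \cong \Hom_k(K,M)$. Once this is in hand, restricting along $\alpha_\fm$ gives an isomorphism of $K[t]/(t^p)$-modules $\alpha_\fm^{*}(K\otimes_k M) \cong \alpha_\fm^{*}(\Hom_k(K,M))$, so these are simultaneously projective or not, and Definition~\ref{de:cosupport} then yields the desired equivalence $\fm\in\pisupp_G(M)\iff\fm\in\picosupp_G(M)$.

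For the first step, the algebra $H^{*}(G,k)$ is finitely generated over $k$ by Friedlander--Suslin, so a graded version of Zariski's lemma shows that the degree-zero part of the homogeneous residue field at a closed point of $\Proj H^{*}(G,k)$ is a finite extension of $k$. Remark~\ref{re:generic-points} then allows one to pick a representative $\alpha_\fm\colon K[t]/(t^p)\to KG$ with $[K:k]<\infty$.

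The crux of the argument is producing the $G_K$-module isomorphism. Because $K$ is a free $k$-module of finite rank, the canonical $k$-linear map
\[
K\otimes_k M \lra \Hom_k(K^{*},M),\qquad a\otimes m\longmapsto \bigl(\phi\mapsto \phi(a)\,m\bigr),
\]
where $K^{*}=\Hom_k(K,k)$, is a bijection. It is $G$-equivariant because $G$ acts only on the $M$-factor on both sides, and it becomes $K$-linear once $K^{*}$ is equipped with the standard action $(a\cdot\phi)(x)=\phi(ax)$; hence it is an isomorphism of $G_K$-modules. Since $K^{*}$ is one-dimensional as a $K$-vector space, any choice of $K$-linear isomorphism $\psi\colon K\to K^{*}$ yields, by precomposition, a $G_K$-module isomorphism $\Hom_k(K^{*},M)\cong \Hom_k(K,M)$. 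Composing with the previous map produces the required isomorphism $K\otimes_k M\cong \Hom_k(K,M)$.

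The main delicate point, I expect, is keeping track of the two natural $K$-module structures on $\Hom_k(K,M)$ and $\Hom_k(K^{*},M)$ and verifying that a (non-canonical) isomorphism $K\cong K^{*}$ intertwines them; the underlying phenomenon is simply that every finite extension of fields is a Frobenius extension. The argument will not extend to non-closed points, where the residue field is infinite over $k$ and the $G_K$-modules $K\otimes_k M$ and $\Hom_k(K,M)$ genuinely differ; the general equality of $\pisupp_G$ and $\picosupp_G$ in that setting will have to come from the classification machinery developed later in the paper.
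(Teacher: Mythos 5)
Your proof is correct and takes essentially the same approach as the paper: reduce to a finite extension $K/k$ via Remark~\ref{re:generic-points} (using that $\fm$ is closed), produce a $G_K$-module isomorphism $K\otimes_k M\cong\Hom_k(K,M)$ using finiteness of $K$ over $k$ together with a (non-canonical) $K$-linear isomorphism $K\cong K^*$, and then conclude by restricting along $\alpha_\fm$. The paper phrases the isomorphism as the evaluation map $\Hom_k(K,k)\otimes_k M\xrightarrow{\cong}\Hom_k(K,M)$, which is the same underlying fact packaged slightly differently; your extra care about the two $K$-module structures on $\Hom_k(K^*,M)$ is exactly the point the paper is glossing over.
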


\begin{proof} 
The key observation is that as $\fm$ is a closed point there is a corresponding $\pi$-point $\alpha_{\fm}\colon K[t]/(t^p)\to KG$ with $K$ a finite extension of $k$; see Remark~\ref{re:generic-points}. It then remains to note that the natural evaluation map is an isomorphism:
\[ 
\Hom_{k}(K,k)\otimes_k M \xra{\ \cong\ }\Hom_{k}(K,M)
\] 
so that $K\otimes_{k}M$ and $\Hom_{k}(K,M)$ are isomorphic as $G_{K}$-modules.
\end{proof}

The next result plays a key role in what follows. The formula for the support of tensor products is \cite[Proposition~5.2]{Friedlander/Pevtsova:2007a}.  The proof of the formula for the cosupport of function objects is similar, and is given in \cite[Theorem 4.4]{Benson/Iyengar/Krause/Pevtsova:2015a}. We sketch it here for the reader's convenience.

\begin{theorem}
\label{th:tensor-and-hom-pi} 
Let $M$ and $N$ be $G$-modules. Then there are equalities
\begin{enumerate}[{\quad\rm(1)}]
\item $\pisupp_{G}(M \otimes_k N) = \pisupp_{G}(M) \cap \pisupp_{G}(N)$,
\item $\picosupp_{G}(\Hom_k(M,N)) = \pisupp_{G}(M) \cap \picosupp_{G}(N)$. 
\end{enumerate}
\end{theorem}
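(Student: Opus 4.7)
The plan is to reduce both equalities to a local statement about modules over $A := K[t]/(t^{p})$ equipped with the diagonal action induced by the primitive comultiplication $\Delta(t) = t\otimes 1 + 1\otimes t$, and then to establish that statement using the structure theory of $A$-modules.

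Fix $\fp \in \Proj H^{*}(G,k)$ and, using Remark~\ref{re:pi-basics}(2), choose a representative $\pi$-point $\alpha_{\fp}\colon A \to KG$ that factors through (the group algebra of) a quasi-elementary subgroup scheme of $G_{K}$. Since restriction along such an $\alpha_{\fp}$ is compatible with the diagonal actions on tensor products over $K$, combining with the natural $G_{K}$-module isomorphism $K \otimes_{k}(M \otimes_{k} N) \cong (K \otimes_{k} M) \otimes_{K} (K \otimes_{k} N)$ yields an isomorphism of $A$-modules
\[
\alpha_{\fp}^{*}(K \otimes_{k}(M \otimes_{k} N)) \;\cong\; \alpha_{\fp}^{*}(K \otimes_{k} M) \otimes_{K} \alpha_{\fp}^{*}(K \otimes_{k} N)
\]
with the diagonal $A$-action. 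An analogous derivation, using the natural $G_{K}$-module identification $\Hom_{k}(K,\Hom_{k}(M,N)) \cong \Hom_{K}(K \otimes_{k}M,\Hom_{k}(K,N))$ obtained from the adjunctions between restriction and its left and right adjoints $(-)_{K}$ and $(-)^{K}$, produces
\[
\alpha_{\fp}^{*}(\Hom_{k}(K,\Hom_{k}(M,N))) \;\cong\; \Hom_{K}(\alpha_{\fp}^{*}(K\otimes_{k}M),\alpha_{\fp}^{*}(\Hom_{k}(K,N)))
\]
again as $A$-modules with the diagonal action.

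The essential ingredient is then the following local lemma: for nonzero $A$-modules $V,W$, the module $V \otimes_{K}W$ (respectively $\Hom_{K}(V,W)$) equipped with the diagonal $A$-action is $A$-projective if and only if $V$ or $W$ is $A$-projective. Indeed, every module over the principal ideal Artinian ring $A$ decomposes as a direct sum of cyclic modules $A_{i} := K[t]/(t^{i})$ for $1\le i \le p$, and $A$-projectivity is equivalent to having only $A_{p}$-summands. A direct computation on cyclics shows that $A_{i}\otimes_{K}A_{j}$ contains a non-projective summand precisely when $i,j<p$, and the analogue for $\Hom_{K}$ follows from the self-duality $A_{i}^{*}\cong A_{i}$ induced by the antipode $S(t)=-t$. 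Combined with the identifications above and Definition~\ref{de:cosupport}, this yields both equalities.

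The principal obstacles are twofold. First, in the infinite-dimensional case of the $\Hom_{K}$ lemma, arbitrary direct sums in $V$ do not commute with $\Hom_{K}$, so one must pass through direct products; this is salvaged by the fact that $A$ is perfect (being Artinian), whence arbitrary products of $A$-projectives are $A$-projective. Second, ensuring that the restriction isomorphisms above are genuine $A$-module isomorphisms (and not merely isomorphisms modulo projectives) relies on the careful choice of $\pi$-point representative through a quasi-elementary subgroup scheme, where the image of $t$ may be arranged to be a primitive element of the ambient Hopf algebra, so that restriction genuinely intertwines the diagonal actions.
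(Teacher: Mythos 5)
Your outline (reduce to a quasi\nobreakdash-elementary group scheme, pass to a $\pi$-point defined over $k$, then prove a ``local lemma'' by decomposing $K[t]/(t^{p})$-modules into Jordan blocks) matches the paper's strategy, and your Jordan-block computation and the use of perfectness of $K[t]/(t^{p})$ to handle infinite products in the $\Hom$-case are fine. However, there is a genuine gap at the step you yourself flag as a potential ``obstacle,'' and the resolution you propose for it is incorrect.

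A $\pi$-point $\alpha\colon K[t]/(t^{p})\to KG$ is an algebra map, not a Hopf algebra map, and it is \emph{not} possible in general to arrange for $\alpha(t)$ to be a primitive element of $KG$. Even in the most classical cases this fails: in the group algebra $k(\bbZ/p) = k[z]/(z^{p})$ with $z=g-1$ one has $\Delta(z) = z\otimes 1 + 1\otimes z + z\otimes z$, and in $k\bbG_{a(r)} = k[u_{0},\dots,u_{r-1}]/(u_{i}^{p})$ only scalar multiples of $u_{0}$ are primitive, while a $\pi$-point may well map $t$ to a combination involving $u_{i}$ for $i\ge 1$. Consequently, restriction along $\alpha$ does \emph{not} genuinely intertwine the diagonal actions: the $t$-action on $\alpha^{*}(\Hom_{k}(M,N))$ is by $(1\otimes\sigma)\Delta(\alpha(t))$, which equals $\alpha(t)\otimes 1 - 1\otimes\alpha(t) + w$ with a nonzero error term $w\in I\otimes I$. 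So the displayed isomorphisms
\[
\alpha^{*}(K\otimes_{k}(M\otimes_{k}N)) \cong \alpha^{*}(M_{K})\otimes_{K}\alpha^{*}(N_{K}),
\qquad
\alpha^{*}(\Hom_{k}(K,\Hom_{k}(M,N))) \cong \Hom_{K}(\alpha^{*}(M_{K}),\alpha^{*}(N^{K}))
\]
are simply not isomorphisms of $K[t]/(t^{p})$-modules in general, and the argument stalls exactly where you acknowledge it might.

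What is missing is a comparison argument: one must show that the two $K[t]/(t^{p})$-structures on $\Hom_{K}(M_{K},N^{K})$ --- the true one via $(1\otimes\sigma)\Delta(\alpha(t))$ and the ``fake primitive'' one via $\alpha(t)\otimes 1 - 1\otimes\alpha(t)$ --- are simultaneously projective or not. The paper does this by noting that $w$ is a sum of products of $p$-nilpotent elements of $KG\otimes KG$ and invoking \cite[Proposition~2.2]{Friedlander/Pevtsova:2005a}, which says precisely that adding such higher-order terms to a flat map $K[t]/(t^{p})\to\Lambda$ does not change whether restriction of a given module is projective. Only \emph{after} that replacement does one get a genuine isomorphism with $\Hom_{K}(\alpha^{*}(M_{K}),\alpha^{*}(N^{K}))$ carrying the convolution action for the primitive comultiplication $t\mapsto t\otimes1+1\otimes t$, and then your local Jordan-block lemma applies. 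The same issue affects your tensor case, where the needed compatibility is with $\Delta(\alpha(t))$ rather than $\alpha(t)\otimes1+1\otimes\alpha(t)$; this is handled in \cite[Proposition~5.2]{Friedlander/Pevtsova:2007a} by the same device. In short: you correctly isolate the delicate point, but ``arrange $\alpha(t)$ to be primitive'' is not an available move, and the proof needs the nilpotent-perturbation comparison in its place.
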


\begin{proof}   
Remark~\ref{re:pi-basics}(2) implies that we can assume that $G$ is a quasi-elementary 
group scheme.  Hence, $kG$ is isomorphic to $k[t_{1},\dots,t_{r}]/(t_{1}^{p},\dots t_{r}^{p})$ as an algebra.  

Let $\alpha \colon K[t]/(t^{p})\to KG$ be a  $\pi$-point of $G$. Extending scalars and using 
the isomorphism $\alpha^*(\Hom_k(M,N)^K)\cong \alpha^*(\Hom_K(M_K,N^K))$, we may 
assume that $\alpha$ is defined over $k$. To prove the equality (2), we need to show that 
$\alpha^*(\Hom_k(M,N))$ is projective if and only if $\alpha^*(M)$ or $\alpha^*(N)$  is projective. 

Let $\sigma\colon kG\to kG$ be the antipode of $kG$, $\Delta\colon kG\to kG\otimes kG$ 
its comultiplication, and set $I=\Ker(kG\to k)$, the augmentation ideal of $kG$.  
Identifying $t$ with its image in $kG$ under $\alpha$, one has
\[ 
(1\otimes\sigma)\Delta(t) = t\otimes 1 - 1\otimes t + w \quad\text{with $w\in I\otimes I$;}
\] 
see \cite[I.2.4]{Jantzen:2003a}. 

Given a module over $kG\otimes kG$, we consider two $k[t]/(t^{p})$-structures on it: 
One where $t$ acts via multiplication with $(1\otimes\sigma)\Delta(t)$ and another where 
it acts via multiplication with $t\otimes 1 - 1\otimes t$. We claim that these two $k[t]/(t^{p})$-modules 
are both projective or both not projective. This follows from a repeated use of 
\cite[Proposition~2.2]{Friedlander/Pevtsova:2005a} because $w$ can be represented as 
a sum of products of nilpotent elements of $kG\otimes kG$, where each nilpotent element 
$x$ satisfies $x^p=0$.

We may thus assume that $t$ acts on $\Hom(M,N)$ via $t\otimes 1 - 1\otimes t$. 
There is then an isomorphism of $k[t]/(t^{p})$-modules
\[ 
\alpha^{*}(\Hom_k(M,N)) \cong \Hom_k(\alpha^{*}(M),\alpha^{*}(N))\,,
\] 
where the action of $k[t]/(t^{p})$ on the right hand side is the one obtained by viewing 
it as a Hopf algebra with comultiplication defined by $t\mapsto t\otimes 1 + 1\otimes t$ 
and antipode $t\mapsto -t$. It remains to observe that for any $k[t]/(t^{p})$-modules $U,V$,
the module $\Hom_k (U,V)$ is projective if and only if one of $U$ or $V$ is projective.
\end{proof} 

\section{Support and cosupport via cohomology}
\label{se:bik} 
This section provides a quick summary of the techniques developed in \cite{Benson/Iyengar/Krause:2008a,Benson/Iyengar/Krause:2012b}, with a focus on  modules over finite group schemes. Throughout $G$ will be a finite group scheme over a field $k$.

\subsection*{The stable module category} 
The stable module category of $G$ is denoted $\StMod G$. Its objects are all (possibly infinite dimensional) $G$-modules. The set of morphisms between $G$-modules $M$ and $N$ is by definition
\[ 
\sHom_{G}(M,N) := \frac{\Hom_G(M,N)}{\mathrm{PHom}_{G}(M,N)}
\] 
where ${\rm PHom}_G(M,N)$ consists of all $G$-maps between $M$ and $N$ which factor through a projective $G$-module.  Since $G$-modules are precisely modules over the group algebra $kG$ and the latter is a Frobenius algebra~\cite[Lemma~I.8.7]{Jantzen:2003a}, the stable module category is triangulated, with suspension equal to $\Omega^{-1}(-)$, the inverse of the syzygy functor. The tensor product $M\otimes_{k}N$ of $G$-modules, with the usual diagonal $G$-action, is inherited by $\StMod G$ making it a tensor triangulated category. This category is compactly generated and the subcategory of compact objects is equivalent to $\stmod G$, the stable module category of finite dimensional $G$-modules. For details, readers might consult Carlson \cite[\S5]{Carlson:1996a} and Happel~\cite[Chapter 1]{Happel:1988a}.
 
A subcategory of $\StMod G$ is said to be \emph{thick} if it is a triangulated subcategory that is closed under direct summands. 
Note that any triangulated subcategory is closed under finite direct sums. A triangulated subcategory that is closed under all set-indexed
direct sums is said to be \emph{localising}. A localising subcategory is also thick. We say that a subcategory  is a \emph{tensor ideal} if it is
closed under $M\otimes_{k}-$ for any $G$-module $M$.
 
We write $\sHom_{G}^{*}(M,N)$ for the graded abelian group with $\sHom_{G}(M,\Omega^{-i}N)$ the component in degree $i$. Composition of morphisms endows $\sHom^{*}_{G}(M,M)$ with a structure of a graded ring and $\sHom_{G}^{*}(M,N)$ with the structure of a graded left-$\sHom^{*}_{G}(N,N)$ and right-$\sHom^{*}_{G}(M,M)$ bimodule. There is a natural map
\[ 
\Ext^{*}_{G}(M,N)\lra \sHom_{G}^{*}(M,N)
\] 
of graded abelian groups; it is a homomorphism of graded rings for $M=N$. Composing this with the homomorphism
\[ 
-\otimes_{k}M \colon H^{*}(G,k)\lra \Ext^{*}_{G}(M,M)
\] 
yields a homomorphism of graded rings
\[ 
\phi_{M}\colon H^{*}(G,k)\lra \sHom_{G}^{*}(M,M)\,.
\] 
It is clear from the construction that $\sHom_{G}^{*}(M,N)$ is a graded bimodule over $H^{*}(G,k)$ with left action induced by
$\phi_{N}$ and right action induced by $\phi_{M}$, and that the actions differ by the usual sign. Said otherwise, $H^{*}(G,k)$ \emph{acts} on $\StMod G$, in the sense of \cite[Section~3]{Benson/Iyengar/Krause:2012b}.

\subsection*{The spectrum of the cohomology ring} 
We write $\Spec H^{*}(G,k)$ for the set of homogenous prime ideals in $H^{*}(G,k)$. It has one more point than $\Proj H^{*}(G,k)$, namely, the maximal ideal consisting of elements of positive degree. A subset $\mcV$ of $\Spec H^{*}(G,k)$ is \emph{specialisation closed} if whenever $\fp$ is in $\mcV$ so is any prime $\fq$ containing $\fp$. Thus $\mcV$ is specialisation closed if and only if it is a union of Zariski closed subsets, where a subset is \emph{Zariski closed} if it is of the form
\[ 
\mcV(I):=\{\fp\in \Spec H^{*}(G,k)\mid \fp\subseteq I\}
\] 
for some ideal $I$ in $H^{*}(G,k)$. In the sequel, given  $\fp\in\Spec H^{*}(G,k)$ and  a graded $H^{*}(G,k)$-module $N$, we write $N_{\fp}
$ for the homogeneous localisation of $N$ at $\fq$.

\subsection*{Local cohomology} 
Let $\mcV$ be a specialisation closed subset of $\Spec H^*(G,k)$. A $G$-module $M$ is called \emph{$\mcV$-torsion} if $\sHom^{*}_{G}(C,M)_{\fq} =0$ for any finite dimensional $G$-module $C$ and $\fq\not\in \mcV$. We write  $ (\StMod G)_{\mcV}$ for the full subcategory of $\mcV$-torsion modules. This is a localising subcategory and the inclusion $(\StMod G)_{\mcV}\subseteq \StMod G$ admits a right adjoint, denoted $\gam_{\mcV}$. Thus, for each $M$ in $\StMod G$ one gets a functorial
exact triangle
\[ 
\gam_{\mcV}M \lra M \lra L_{\mcV}M\lra
\] 
and this provides a localisation functor $L_\mcV$ that annihilates precisely the $\mcV$-torsion modules. For details, see
\cite[Section~4]{Benson/Iyengar/Krause:2008a}.

A noteworthy special case pertains to a point $\fp$ in $\Proj H^{*}(G,k)$ and the subset
\[ 
\mcZ(\fp):=\{\fq\in \Spec H^{*}(G,k)\mid \fq \not\subseteq \fp\}.
\] 
This is evidently a specialisation closed subset. The corresponding localisation functor $L_{\mcZ(\fp)}$ models localisation at $\fp$,
that is to say, for any $G$-module $M$ and finite dimensional $G$-module $C$, the morphism $M\to L_{\mcZ(\fp)}M$ induces an
isomorphism
\[ 
\sHom^{*}_{G}(C,M)_\fp\stackrel{\cong}\lra \sHom^{*}_{G}(C,L_{\mcZ(\fp)}M)
\]
of graded $H^{*}(G,k)$-modules; see \cite[Theorem~4.7]{\bik:2008a}. For this reason, we usually write $M_{\fp}$ in lieu of $L_{\mcZ(\fp)}M$. When the natural map $M\to M_{\fp}$ is an isomorphism we say $M$ is \emph{$\fp$-local}, and that $M$ is \emph{$\fp$-torsion} if it is $\mcV(\fp)$-torsion.

We write $\gam_{\fp}$ for the exact functor on $\StMod G$ defined on objects by
\[ 
\gam_{\fp}M:= \gam_{\mcV(\fp)}(M_{\fp}) = (\gam_{\mcV(\fp)} M)_{\fp}\,.
\] 
The equality is a special case of a general phenomenon: the functors $\gam_{\mcV}$ and $L_{\mcW}$ commute for any specialisation
closed subsets $\mcV$ and $\mcW$; see \cite[Proposition~6.1]{\bik:2008a}. This property will be used often and without further comment.

\subsection*{Support and cosupport} 
We introduce the \emph{support} of a $G$-module $M$ to be the following subset of $\Proj H^{*}(G,k)$.
\[ 
\supp_{G}(M):=\{\fp \in \Proj H^{*}(G,k)\mid \gam_{\fp}M \text{ is not projective}\}
\] 
As in \cite[Section~4]{Benson/Iyengar/Krause:2012b}, the \emph{cosupport} of $M$ is the set
\[ 
\cosupp_{G}(M) := \{\fp\in\Proj H^*(G,k) \mid \text{$\Hom_k(\gam_\fp k,M)$ is not projective}\}.
\] 
Note that we are ignoring the closed point of $\Spec H^{*}(G,k)$. It turns out that the support and the cosupport of $M$ coincide with its $\pi$-support and $\pi$-cosupport introduced in Section~\ref{se:pi-points}; see Theorems~\ref{th:pisupp=bik} and \ref{th:picosupp=bik}.

\subsection*{Stratification} 
Let $(\sfT,\otimes,\one)$ be a compactly generated tensor triangulated category and $R$ a graded-commutative noetherian ring acting on $\sfT$ via homomorphisms of rings $R\to \End^{*}(X)$, for each $X$ in $\sfT$; see \cite[Section~8]{\bik:2008a} for details. For each $\fp$ in $\Spec R$, one can construct a functor $\gam_{\fp}\colon \sfT\to \sfT$ as above and use it to define support and cosupport for objects in $\sfT$. The subcategory
\[ 
\gam_{\fp}\sfT:=\{ X\in \sfT\mid X\cong\gam_{\fp}X\}
\]
consists of all objects $X$ in $\sfT$ such that $\Hom_\sfT^*(C,X)$ is $\fp$-local and $\fp$-torsion for each compact object $C$, and has the following alternative description:
\[ 
\gam_{\fp}\sfT=\{ X\in \sfT\mid \supp_R(X)\subseteq\{\fp\}\};
\]
see \cite[Corollary~5.9]{\bik:2008a}. The subcategory $\gam_{\fp}\sfT$ of $\sfT$ is tensor ideal and localising. 

We say that $\gam_{\fp}\sfT$ is \emph{minimal} if it is non-zero and contains no proper non-zero tensor ideal localising subcategories. Following \cite[Section~7]{\bik:2011a} we say $\sfT$ is \emph{stratified} by $R$ if for each $\fp$ the subcategory $\gam_{\fp}\sfT$ is either zero or minimal. When this property holds, the tensor ideal localising subcategories of $\sfT$ are parameterised by subsets of $\Spec R$; see \cite[Theorem~3.8]{\bik:2011b}. The import of this statement is that the classification problem we have set out to solve can be tackled one prime at a time.

Lastly, we recall from \cite[Section~7]{\bik:2012b} the behaviour of support under change of rings and categories.

\subsection*{Change of rings and categories} In this paragraph,
$(\sfT,R)$ denotes a pair consisting of a compactly generated
triangulated category $\sfT$ endowed with an action of a
graded-commutative noetherian ring $R$. A functor $(F,\phi)\colon
(\sfT,R)\to (\sfU,S)$ between such pairs consists of an exact functor
$F\colon\sfT\to\sfU$ that preserves set-indexed products and
coproducts, and a homomorphism $f\colon R\to S$ of rings such that,
for each $X\in \sfT$, the following diagram is commutative.
\[
\begin{tikzcd} R\arrow{r}{f} \arrow{d} & S\arrow{d} \\
\End_{\sfT}^*(X) \arrow{r}{F} & \End_{\sfU}^*(FX)
\end{tikzcd}
\] The result below is extracted from
\cite[Corollary~7.8]{\bik:2012b}.

\begin{proposition}
\label{pr:change-cat-ring} 
Let $(F,f)\colon (\sfT,R) \to (\sfU,S)$ be a functor between compactly generated triangulated categories with ring actions. Let $E$ be a left adjoint of $F$, let $G$ be a right adjoint of $F$, and $\phi\colon\Spec S\to\Spec R$ the map that assigns $f^{-1}(\fp)$ to $\fp$. Then for $X$ in $\sfT$ and $Y$ in $\sfU$ there are inclusions:
\begin{enumerate}[\quad\rm(1)]
\item $\phi(\supp_{S}(FX))\subseteq \supp_{R}(X)\quad\text{and}\quad
\supp_{R}(EY)\subseteq\phi(\supp_{S}(Y))$,
\item $\phi(\cosupp_{S}(FX))\subseteq \cosupp_{R}(X)\quad\text{and}\quad
\cosupp_{R}(GY)\subseteq\phi(\cosupp_{S}(Y)).$
\end{enumerate} Each inclusion is an equality if the corresponding functor is
faithful on objects.\qed
\end{proposition}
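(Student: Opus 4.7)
The plan is to reduce all four inclusions to a single compatibility statement: the functor $F$ intertwines the local cohomology functors $\gam_\fp^R$ on $\sfT$ with $\gam_\fq^S$ on $\sfU$ in the manner dictated by $f$. Because the $R$-action on any $Y\in\sfU$ factors as $R\xra{\ f\ }S\to\End_\sfU^*(Y)$, a prime $\fq\subset S$ contributes $\fp=f^{-1}(\fq)=\phi(\fq)$ to the $R$-support automatically, so the content lies in going between the two categories via $F$, $E$, $G$, rather than in the ring map itself.

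For the first inclusion of (1), suppose $\fq\in\supp_S(FX)$, so $\gam_\fq^S(FX)\ne 0$. The compatibility statement above yields that $\gam_\fq^S(FX)$ is computed, up to further $\fq$-local/$\fq$-torsion operations, from $F(\gam_{\phi(\fq)}^R X)$; in particular its nonvanishing forces $\gam_{\phi(\fq)}^R X\ne 0$, i.e.\ $\phi(\fq)\in\supp_R(X)$. The inclusion $\supp_R(EY)\subseteq\phi(\supp_S(Y))$ is the adjoint incarnation: for a compact $C\in\sfT$ one has $\sHom_\sfT^*(C,EY)\cong\sHom_\sfU^*(FC,Y)$, so testing $\fp$-locality and $\fp$-torsion of $EY$ against compacts transfers through $F$, where the first inclusion applies. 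Part (2) is formally parallel, with the completion functor $\Lambda^\fp$ (giving $\cosupp_R(X)=\{\fp\mid\Hom_k(\gam_\fp k,X)\not\cong 0\}$) in place of $\gam_\fp$ and the right adjoint $G$ in place of $E$; the role of the tensor–Hom adjunction is played by the adjunction $(F,G)$.

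For the equality statements, suppose $F$ is faithful on objects, i.e.\ $FZ\cong 0$ only when $Z\cong 0$, and take $\fp\in\supp_R(X)$. Then $\gam_\fp^R X\ne 0$, so $F(\gam_\fp^R X)\ne 0$; unpacking via the ring compatibility, its $S$-support lies in $\phi^{-1}(\fp)$, so some $\fq$ with $\phi(\fq)=\fp$ lies in $\supp_S(FX)$, reversing the inclusion. The analogous argument handles $E$, $G$ on the cosupport side. The main obstacle, I expect, is the compatibility step itself: $\gam_\fp$ and $\Lambda^\fp$ are defined through Brown representability rather than explicit constructions, so their interchange with $F$ is not formal. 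Resolving this requires either explicit Koszul/\v{C}ech models built from generators of $\fp$, or a universal-property argument using the localisation triangles $\gam_\mcV M\to M\to L_\mcV M\to$ together with the compatibility of $F$ with set-indexed (co)products. Once that is in place, the inclusions and equalities above follow by routine manipulation of adjunctions, which is precisely the content of Corollary~7.8 of \cite{Benson/Iyengar/Krause:2012b} that the statement invokes.
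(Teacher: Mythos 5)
Your overall strategy---reducing all four inclusions to a compatibility between $F$, $E$, $G$ and the (co)localisation functors $\gam_\mcV$, $L_\mcV$, $\Lambda^\mcV$---is the right one and is exactly what underlies Corollary~7.8 of \cite{Benson/Iyengar/Krause:2012b}, to which the paper defers (the paper itself gives no proof, just the citation). You also correctly isolate the real technical content: the intertwining isomorphisms of the form $F\gam_\mcV\cong\gam_{\phi^{-1}\mcV}F$ and $FL_\mcV\cong L_{\phi^{-1}\mcV}F$ for specialisation closed $\mcV\subseteq\Spec R$, which do require an argument (Koszul objects, or the universal property of the localisation triangles plus preservation of (co)products).

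There is, however, a concrete error in the step handling $E$. You write that for compact $C\in\sfT$ one has $\sHom_\sfT^*(C,EY)\cong\sHom_\sfU^*(FC,Y)$. That identity would make $E$ a \emph{right} adjoint of $F$; but the hypothesis is that $E$ is the \emph{left} adjoint, so the adjunction reads $\Hom_\sfT(EY,Z)\cong\Hom_\sfU(Y,FZ)$, which does not compute $\Hom_\sfT^*(C,EY)$ for compact $C$. (The formula you wrote is the one relevant to the right adjoint $G$, and is why $G$ appears on the cosupport side.) The correct route for $E$ is not a Hom-level formula but the adjoint form of the intertwining: from $FL_\mcV\cong L_{\phi^{-1}\mcV}F$ and $F\gam_\mcV\cong\gam_{\phi^{-1}\mcV}F$ one deduces, by passing to left adjoints, that $E\gam_{\phi^{-1}\mcV}\cong\gam_\mcV E$ and $EL_{\phi^{-1}\mcV}\cong L_\mcV E$. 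Then for $\fp\in\Spec R$ one gets $\gam_\fp^R(EY)\cong E\bigl(\gam_{\phi^{-1}\mcV(\fp)}L_{\phi^{-1}\mcZ(\fp)}Y\bigr)$, whose vanishing when $\fp\notin\phi(\supp_S Y)$ follows from the local--global principle. Dually, the right adjoint $G$ intertwines the completion/localisation functors, giving the cosupport inclusion. This is where the distinction between left and right adjointness of $E$ versus $G$ actually enters the proof, and it is obscured by the misattributed adjunction formula in your sketch.
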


\part{Detecting projectivity with $\pi$-support}
\label{part:detection}

Let $G$ be a finite group scheme over a field of positive characteristic. This part is dedicated to a proof of Theorem~\ref{th:main} that
asserts that \emph{$\pi$-support detects projectivity} of $G$-modules, by which we mean that a $G$-module $M$ is projective if
(and only if) $\pisupp_{G}(M)=\varnothing$. This result was claimed in \cite{Friedlander/Pevtsova:2007a}, but the argument there has a flaw (see Remark~\ref{re:fpnot}) which we repair here. Most of the different pieces of our proof are already available in the literature; we
collect them here for the sake of both completeness and comprehensibility. 

The essential new ingredient is a ``subgroup  reduction principle'', Theorem~\ref{th:principle} which allows us  to extend the detection theorem from the case of a connected finite group  scheme to an arbitrary one. Theorem~\ref{th:principle} relies on a remarkable result of Suslin (see also \cite{Bendel:2001a} for the special case of a unipotent finite group scheme) on detection of nilpotents in the cohomology ring $H^*(G,\Lambda)$ for a $G$-algebra $\Lambda$, generalising work of Quillen and Venkov  for finite groups.

The first step in our proof of the detection theorem is to settle the case of a \emph{connected unipotent} finite group scheme. This is achieved in Section~\ref{se:unipotent}.  The argument essentially follows the one of Bendel \cite{Bendel:2001a} but is simpler, for two reasons: because of the connectedness assumption and because we employ the subgroup reduction principle that allows one to apply induction on $\dim_{k} k[G]$ in certain cases. 

The subgroup reduction principle cannot be used for general connected finite groups schemes; see Example~\ref{ex:sl2}. To tackle that case, we import a result from \cite{Pevtsova:2002a} which readily implies that $\pi$-support detects projectivity for Frobenius kernels of connected reductive groups; in fact it would suffice to treat $\GL_{n(r)}$, but the proof is no different in general.  A connected
group scheme can be realised as a subgroup of a Frobenius kernel and so we deduce the desired property for the former from that for the
latter using a descent theorem. This is done in Section~\ref{se:generalcase} and essentially repeats the argument in~\cite{Pevtsova:2004a}.
This also takes care of group schemes that are a direct product of their identity component with an elementary abelian $p$-group. After all, the statement of the theorem does not mention the coalgebra part of the structure, and in this case the algebra structure is identical to that of a suitably chosen connected finite group scheme.

Armed with these results, we tackle the general case, also in Section~\ref{se:generalcase}, but not without yet another invocation
of the subgroup reduction principle, Theorem~\ref{th:principle}.

\section{A subgroup reduction principle}
\label{se:an-induction-principle} 
In this section we establish basic results, including the general subgroup reduction principle alluded to above, Theorem~\ref{th:principle}, that will be used repeatedly in proving that $\pi$-support detects projectivity.  Throughout, $G$ will
be a finite group scheme over a field $k$ of positive characteristic.

\begin{lemma}
\label{le:vanish} 
Let $G$ be a finite group scheme with the property that for any $G$-module $M$ with $\pisupp_G(M) = \varnothing$ one has
$H^i(G,M)=0$ for $i\gg 0$. Then $\pi$-support detects projectivity of $G$-modules.
\end{lemma}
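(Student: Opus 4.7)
The plan is to reduce projectivity of $M$ to the vanishing $\Ext^1_G(N,M)=0$ for every finite dimensional $G$-module $N$, and then to extract this vanishing by feeding a cleverly chosen infinite direct sum into the cohomological hypothesis. Since $kG$ is a finite dimensional noetherian Frobenius Hopf algebra, projective equals injective for $G$-modules, and Baer's criterion over a noetherian ring lets injectivity be detected on finite dimensional test modules. The problem thus reduces to verifying $\Ext^1_G(N,M)=0$ for finite dimensional $N$, under the hypothesis $\pisupp_G(M)=\varnothing$.

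Fix such $N$ and set $Y:=\Hom_k(N,M)\cong N^{*}\otimes_k M$ with the diagonal $G$-action, so that $H^i(G,Y)\cong\Ext^i_G(N,M)$ by Hopf algebra adjunction. Theorem~\ref{th:tensor-and-hom-pi}(1) gives
\[
\pisupp_G(Y)=\pisupp_G(N^{*})\cap\pisupp_G(M)\subseteq\pisupp_G(M)=\varnothing.
\]
Since a $\pi$-point $\alpha$ is flat by definition, restriction along $\alpha$ takes projective $G$-modules to projective $K[t]/(t^p)$-modules and therefore commutes with syzygies up to projective summands; hence $\pisupp_G(\Omega^nY)=\pisupp_G(Y)=\varnothing$ for all $n\geq 0$. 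Since $\pi$-support commutes with direct sums, the $G$-module
\[
Z\,:=\,\bigoplus_{n\geq 0}\Omega^n Y
\]
also has empty $\pi$-support.

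Applying the hypothesis to $Z$ yields an integer $I$ with $H^i(G,Z)=0$ for all $i\geq I$. Because cohomology commutes with coproducts, this forces $H^i(G,\Omega^nY)=0$ for all $n\geq 0$ and all $i\geq I$ simultaneously. Dimension shifting in the short exact sequence $0\to\Omega Y\to P\to Y\to 0$ with $P$ projective (using that $kG$ is self-injective, so $H^i(G,P)=0$ for $i\geq 1$) yields the standard isomorphism $H^{n+1}(G,\Omega^nY)\cong H^1(G,Y)$ for every $n\geq 0$. Specialising to any $n\geq I-1$ thus forces $H^1(G,Y)=0$, that is, $\Ext^1_G(N,M)=0$, completing the argument.

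The main obstacle is conceptual rather than technical. Applying the hypothesis directly to $Y$ only gives $H^i(G,Y)=0$ for $i$ above some bound depending on $Y$, which is too weak to force $\Ext^1_G(N,M)=0$. The key device is the auxiliary module $Z=\bigoplus_{n\geq 0}\Omega^nY$, which packages all syzygies of $Y$ into a single object; the hypothesis then produces a vanishing that is uniform across all syzygies, and dimension shifting translates this uniformity into the vanishing of $H^1(G,Y)$.
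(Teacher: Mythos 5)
Your proof is correct, but it takes a genuinely different route from the paper's. The paper applies the hypothesis to $\Hom_k(S,k)\otimes_k M$ for each \emph{simple} $G$-module $S$, obtains $\Ext^i_G(S,M)=0$ for $i\gg0$, and then implicitly uses two facts: that there are only finitely many simple $kG$-modules (so the vanishing bound can be made uniform over $S$), and that over a self-injective algebra a module of finite injective dimension is injective, hence projective. Your argument instead reduces to $\Ext^1_G(N,M)=0$ for all finite dimensional $N$ via Baer's criterion, fixes one such $N$, and manufactures uniformity by a different device: packaging all syzygies $\Omega^n\Hom_k(N,M)$ into a single module $Z$, invoking the hypothesis once for $Z$, and then pulling the conclusion back down through dimension shifting. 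The intermediate claims you use — that $\pi$-support is invariant under $\Omega$ and commutes with coproducts, and that $H^*(G,-)$ commutes with coproducts because $k$ admits a resolution by finitely generated projectives over the noetherian ring $kG$ — are all sound. The trade-off: the paper's proof is shorter but leans on the finiteness of the set of simples, whereas your direct-sum trick sidesteps that finiteness entirely, at the cost of a slightly longer argument.
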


\begin{proof} 
Let $M$ be a $G$-module with $\pisupp_{G}(M)=\varnothing$. Then, for any simple $G$-module $S$, Theorem~\ref{th:tensor-and-hom-pi} yields
\[ 
\pisupp_{G}(\Hom_{k}(S,k) \otimes_{k} M)  =
\pisupp_{G}(\Hom_{k}(S,k)) \cap \pisupp_{G}(M)  =\varnothing\,.
\]
Thus, for $i\gg 0$ the hypothesis on $G$ gives the second equality below:
\[ 
\Ext_G^{i}(S, M)\cong H^{i}(G, \Hom_{k}(S,k) \otimes_{k} M) =0\,,
\] 
where the isomorphism holds since all simple $G$-modules are finite dimensional. It follows that $M$ is projective, as desired.
\end{proof}

The following observation will be of some use in what follows.

\begin{remark}
\label{re:groupalgebras}
If $G$ and $G'$ are unipotent abelian group schemes such that their group algebras are isomorphic, then $\pi$-support detects projectivity of $G$-modules if and only if it detects projectivity of $G'$-modules.

Indeed, this is because projectivity of a $G$-module $M$ does not involve the comultiplication on $kG$, and when $G$ is unipotent abelian $\pi$-points are just flat homomorphism of $K$-algebras $K[t]/(t^{p})\to KG$, for some field extension $K/k$, and again have nothing to do with the comultiplication on $KG$.
\end{remark}

To establish that $\pi$-support detects projectivity we need a version of Dade's lemma proved by Benson, Carlson, and Rickard in
\cite[Lemma~4.1]{Benson/Carlson/Rickard:1996a}. For our purposes we restate the result in terms of $\pi$-support as can be found in
\cite[Theorem 5.4]{Benson/Iyengar/Krause/Pevtsova:2015a}.

\begin{theorem}
\label{thm:Dade} 
If $\mcE$ is a quasi-elementary group scheme, then $\pi$-support detects projectivity of $\mcE$-modules.
\end{theorem}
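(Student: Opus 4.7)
The plan is to reduce the statement for an arbitrary quasi-elementary group scheme to the known case of elementary abelian $p$-groups, where the result is the Benson--Carlson--Rickard infinite-dimensional version of Dade's Lemma.

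By Example~\ref{ex:quasi-elementary}, a quasi-elementary group scheme $\mcE$ is of the form $\bbG_{a(r)} \times (\bbZ/p)^s$, and its group algebra is isomorphic as a $k$-algebra to
\[
k[z_1,\dots,z_n]/(z_1^p,\dots,z_n^p) \quad \text{where } n = r+s.
\]
This is precisely the group algebra of an elementary abelian $p$-group $E$ of rank $n$ over $k$ (cf.\ Example~\ref{ex:finite-groups}). Since $\mcE$ is unipotent abelian, Example~\ref{ex:pi-point} shows that a $\pi$-point of $\mcE$ is simply a flat $K$-algebra homomorphism $K[t]/(t^p) \to K\mcE$, and equivalence of $\pi$-points is detected by proportionality of linear parts; neither notion uses the comultiplication. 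The same description applies to $E$. Hence, invoking Remark~\ref{re:groupalgebras}, $\pi$-support detects projectivity of $\mcE$-modules if and only if it detects projectivity of $E$-modules.

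Thus it suffices to treat the case where $G = E$ is an elementary abelian $p$-group of rank $n$. This is exactly the content of \cite[Lemma~4.1]{Benson/Carlson/Rickard:1996a} (the extension of Dade's Lemma to arbitrary, possibly infinite-dimensional, modules), repackaged in the language of $\pi$-points as \cite[Theorem~5.4]{Benson/Iyengar/Krause/Pevtsova:2015a}: an $E$-module $M$ is projective if and only if $\alpha^{*}(M_K)$ is projective for every $\pi$-point $\alpha\colon K[t]/(t^p) \to KE$, and the set of $\pi$-points giving non-projective restrictions is by definition $\pisupp_{E}(M)$. So vanishing of $\pisupp_{E}(M)$ forces $M$ to be projective.

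There is essentially no obstacle here beyond correctly packaging the reduction: the real work is hidden in the Benson--Carlson--Rickard theorem. The only subtlety worth flagging is that for the unipotent abelian group scheme $\mcE$ (as opposed to the ordinary elementary abelian group $E$) one must make sure that the notions of projectivity and of $\pi$-point depend only on the underlying $k$-algebra structure of the group algebra; this is precisely what Remark~\ref{re:groupalgebras} records, and it is the formal mechanism that bridges the scheme-theoretic statement to the classical group-theoretic one.
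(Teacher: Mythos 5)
Your proof is correct and follows essentially the same route as the paper's: identify $k\mcE$ with the group algebra of an elementary abelian $p$-group of rank $r+s$, invoke Remark~\ref{re:groupalgebras} to transfer both projectivity and $\pi$-points across the algebra isomorphism, and then cite the Benson--Carlson--Rickard extension of Dade's Lemma in its $\pi$-point formulation (\cite[Theorem~5.4]{Benson/Iyengar/Krause/Pevtsova:2015a}). You simply spell out the reduction in more detail than the paper does.
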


\begin{proof} 
The group algebra $k\mcE$ of a quasi-elementary group scheme is isomorphic to the group algebra of an elementary abelian
finite group as seen in Example~\ref{ex:quasi-elementary}. In view of Remark~\ref{re:groupalgebras},  the result follows from \cite[Theorem 5.4]{Benson/Iyengar/Krause/Pevtsova:2015a}.
 \end{proof}

The next result, which is a corollary of Suslin's theorem on detection of nilpotence in cohomology \cite[Theorem~5.1]{Suslin:2006a}, is critical to what follows.

\begin{theorem}
\label{th:suslin} 
Let $G$ be a finite group scheme over a field $k$ and $\Lambda$ a unital associative $G$-algebra. If $\pisupp_{G}(\Lambda) =\varnothing$, then any element in $H^{\ges 1}(G, \Lambda)$ is nilpotent.
\end{theorem}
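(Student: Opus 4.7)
The plan is to deduce this directly from Suslin's detection theorem for nilpotence in $H^*(G,\Lambda)$, cited from \cite[Theorem~5.1]{Suslin:2006a}. That theorem says that an element $\zeta\in H^*(G,\Lambda)$ is nilpotent if and only if its image under $\alpha^{*}\colon H^{*}(G,\Lambda)\to H^{*}(K[t]/(t^{p}),\alpha^{*}(\Lambda_{K}))$ is nilpotent for every $\pi$-point $\alpha\colon K[t]/(t^{p})\to KG$ of $G$ (where $\alpha^*$ is induced by base change and restriction). So the task reduces to showing $\alpha^{*}(\zeta)$ is nilpotent for every such $\alpha$, whenever $\zeta\in H^{\ges 1}(G,\Lambda)$ and $\pisupp_{G}(\Lambda)=\varnothing$.

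Fix such a $\pi$-point $\alpha$, with $\fp\in\Proj H^{*}(G,k)$ its equivalence class, and set $A:=K[t]/(t^{p})$. By the definition of $\pi$-support, $\pisupp_{G}(\Lambda)=\varnothing$ means that $\alpha^{*}(\Lambda_{K})$ is projective over $A$ for every $\pi$-point $\alpha$. (One should check equivalence-invariance: if $\alpha\sim\beta$ then both restrictions are projective or both are not; this is implicit in the definition of $\pisupp$ via Remark~\ref{re:generic-points} and the discussion following Definition~\ref{de:cosupport}.)

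Now $A=K[t]/(t^{p})$ is a finite dimensional local self-injective $K$-algebra, so projective $A$-modules are free, and in particular $H^{i}(A,M)=0$ for every free $A$-module $M$ and every $i\ges 1$. Applied to $M=\alpha^{*}(\Lambda_{K})$, this gives
\[
H^{i}(A,\alpha^{*}(\Lambda_{K}))=0\qquad\text{for all } i\ges 1.
\]
Consequently the image of any $\zeta\in H^{\ges 1}(G,\Lambda)$ under $\alpha^{*}$ is zero, and hence nilpotent in the (trivially graded-commutative) ring $H^{*}(A,\alpha^{*}(\Lambda_{K}))$.

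Since this holds for every $\pi$-point $\alpha$, Suslin's theorem yields that $\zeta$ itself is nilpotent in $H^{*}(G,\Lambda)$, as required. The only substantive input here is Suslin's detection-of-nilpotence theorem; the rest is the routine observation that projective modules over a Frobenius local algebra have vanishing higher cohomology. No separate obstacle arises, as the hypothesis $\pisupp_{G}(\Lambda)=\varnothing$ was tailored precisely to feed into Suslin's criterion on each $\pi$-point.
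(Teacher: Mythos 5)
There is a genuine gap: you have mis-stated Suslin's detection theorem. Theorem~5.1 of \cite{Suslin:2006a} does not say that nilpotence in $H^{*}(G,\Lambda)$ is detected by restriction along $\pi$-points $\alpha\colon K[t]/(t^{p})\to KG$; it says that nilpotence is detected by restriction to quasi-elementary subgroup schemes $\mcE\le G_{K}$ (for all field extensions $K/k$). These are different criteria, and your version is strictly harder to prove: a $\pi$-point factors through some quasi-elementary $\mcE$, so nilpotence of $z|_{\mcE}$ implies nilpotence of $\alpha^{*}(z)$, but not conversely. Invoking the theorem with the weaker hypothesis you verify (``$\alpha^{*}(z)$ nilpotent for every $\pi$-point'') is not justified by the citation.

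The step you are missing is precisely what Theorem~\ref{thm:Dade} (the Benson--Carlson--Rickard extension of Dade's lemma) supplies. From $\pisupp_{G}(\Lambda)=\varnothing$ one gets, for any field extension $K/k$ and quasi-elementary $\mcE\le G_{K}$, that $\pisupp_{\mcE}\bigl((\Lambda_{K})\da_{\mcE}\bigr)=\varnothing$, since $\pi$-points of $\mcE$ give $\pi$-points of $G$. Theorem~\ref{thm:Dade} then yields that $(\Lambda_{K})\da_{\mcE}$ is \emph{projective as an $\mcE$-module}, hence $H^{\ges 1}(\mcE,\Lambda_{K})=0$ and the restriction of $z_{K}$ to $\mcE$ vanishes; now Suslin's theorem applies. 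Your argument establishes only projectivity of $\alpha^{*}(\Lambda_{K})$ over $K[t]/(t^{p})$ for each individual $\pi$-point, which is weaker than projectivity of $(\Lambda_{K})\da_{\mcE}$, and the passage from the former to the latter is a nontrivial theorem rather than a formality. Inserting Theorem~\ref{thm:Dade} at that point repairs the argument and brings it in line with the paper's proof.
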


\begin{proof} 
For any extension field $K$ of $k$ and any quasi-elementary subgroup scheme $\mcE$ of $G_K$, the hypothesis of the theorem yields
\[ 
\pisupp_{\mcE} (\Lambda_K)\da_{\mcE} = \varnothing\,.
\] 
Theorem~\ref{thm:Dade} then yields that $ (\Lambda_K)\da_{\mcE}$ is projective, so $H^{\ges 1}(\mcE, \Lambda_K) =0$. This implies that for any element $z \in H^{\ges 1}(G, \Lambda)$ the restriction of $z_K$ to $H^*(\mcE,  (\Lambda_K)\da_{\mcE})$ is trivial. Therefore, $z$ is nilpotent, by \cite[Theorem~5.1]{Suslin:2006a}.
\end{proof}

The next result has been proved in a larger context by Burke~\cite[Theorem]{Burke:2012a}.  For finite group schemes a simpler
argument is available and is given below.

\begin{lemma}
\label{le:nilp} 
Let $G$ be a finite group scheme and $M$ a $G$-module. If each element in $\Ext_G^{\ges 1}(M,M)$ is nilpotent,
then $M$ is projective.
\end{lemma}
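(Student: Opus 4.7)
The plan is to translate the nilpotence hypothesis into the statement that $\supp_G(M)$ is empty, and then appeal to the standard fact from the BIK framework recalled in Section~\ref{se:bik} that zero support characterises zero (equivalently, projective) objects in $\StMod G$.

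The first step would be to consider the ring homomorphism $\phi_M\colon H^*(G,k)\to \sHom_G^*(M,M)$, whose image in strictly positive degrees agrees with its image in $\Ext_G^{\ges 1}(M,M)$. By hypothesis, for each homogeneous $z\in H^{\ges 1}(G,k)$ the element $\phi_M(z)$ is nilpotent, so $\phi_M(z^n)=\phi_M(z)^n=0$ for some integer $n\geq 1$ depending on $z$. Under the identification $\sHom_G^{n|z|}(M,M)=\Hom_{\StMod G}(M,\Omega^{-n|z|}M)$, this vanishing says precisely that the morphism realising the action of $z^n$ on $M$ is zero in $\StMod G$.

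Next I would fix a point $\fp\in\Proj H^*(G,k)$ with $z\notin\fp$ and show that $\gam_\fp M=0$. One checks that $\mcV(z)\subseteq\mcZ(\fp)$, so the localisation $L_{\mcZ(\fp)}$ inverts $z$; hence $z$, and therefore $z^n$, acts as an automorphism on $\gam_\fp M=\gam_{\mcV(\fp)}(L_{\mcZ(\fp)}M)$. At the same time, by naturality of the $H^*(G,k)$-action together with exactness of $\gam_\fp$, the morphism realising $z^n$ on $\gam_\fp M$ is the image under $\gam_\fp$ of the zero morphism on $M$, hence is itself zero. A morphism that is simultaneously an automorphism and zero forces its source to vanish. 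Letting $z$ range over $H^{\ges 1}(G,k)$ then yields
\[
\supp_G(M)\,\subseteq\,\bigcap_{z\in H^{\ges 1}(G,k)}\mcV(z)\,\cap\,\Proj H^*(G,k)\,=\,\varnothing\,.
\]

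The final step would be to invoke the general BIK principle that $\supp_G(M)=\varnothing$ forces $M\cong 0$ in $\StMod G$, which is equivalent to $M$ being projective because $kG$ is self-injective. I expect the two delicate points in writing this out carefully to be (a) the observation that simultaneous invertibility and vanishing of the action of $z^n$ on $\gam_\fp M$ really do force $\gam_\fp M=0$, and (b) the fact that support taken in $\Proj H^*(G,k)$ rather than in $\Spec H^*(G,k)$ is already enough to detect zero objects of $\StMod G$, which rests on the feature that the unique closed point of $\Spec H^*(G,k)$ never lies in $\supp_G$ of any $G$-module.
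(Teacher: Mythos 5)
Your argument is correct, and it does take a genuinely different route from the one in the paper. You translate the hypothesis into vanishing of $\supp_G(M)$ in $\Proj H^*(G,k)$ via the local-global machinery of $\gam_\fp$, and then appeal to the abstract principle that empty support forces an object of $\StMod G$ to be zero. Each of the intermediate steps is sound: the identification $\Ext_G^i(M,M)\cong\sHom_G^i(M,M)$ for $i\geq1$ over a self-injective algebra is standard, the inclusion $\mcV(z)\subseteq\mcZ(\fp)$ for $z\notin\fp$ does give invertibility of $z$ on $M_\fp$ and hence on $\gam_\fp M$, and a morphism that is both invertible and zero does annihilate its source. The paper's proof instead invokes Noether normalisation, kills a finite system of parameters $z_1,\dots,z_r$ by passing to powers, builds the Carlson module $L_{\bsz}$, and uses the fact that $L_{\bsz}\otimes_k M$ decomposes up to projectives as a sum of shifts of $M$; the punchline is that $L_{\bsz}$ is a \emph{finite dimensional} module with empty $\pi$-support, hence projective by the Friedlander--Pevtsova theorem for finite dimensional modules (\cite[Theorem~5.6]{Friedlander/Pevtsova:2005a}).

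The point worth stressing is that your delicate point~(b) is not merely a technicality but is where the essential external input lives. The general BIK detection theorem gives vanishing from $\supp$ over \emph{$\Spec$}, and to discard the closed point $\fm=H^{\ges1}(G,k)$ one needs $\gam_\fm M=0$ for every $M$. Since $\gam_\fm=\gam_{\mcV(\fm)}$, this reduces to showing the $\mcV(\fm)$-torsion localising subcategory of $\StMod G$ is trivial; its compact generators are Koszul objects $\kos C{\bsz}$ with $\sqrt{(\bsz)}=\fm$, which are finite dimensional modules with empty support variety, and one concludes they are projective by the very same Friedlander--Pevtsova finite dimensional detection result. So your proof and the paper's use the same nontrivial input, but the paper makes it explicit while your route packages it inside the statement ``$\gam_\fm M=0$''. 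If you were to write this out in full, you would want to unpack that package to make the argument self-contained and avoid any appearance of circularity with the detection theorem that this lemma is ultimately used to prove. The compensating advantage of your approach is that it works uniformly for one prime at a time and needs no detour through Carlson modules, which is conceptually cleaner once the BIK formalism is in place.
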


\begin{proof} 
  The $k$-algebra $H^*(G,k)$ is finitely generated so Noether 
  normalisation provides homogeneous algebraically independent
  elements $z_1, \ldots, z_r$ in $H^{\ges 1}(G,k)$ such that the
  extension $k[z_1, \ldots, z_r] \subseteq H^*(G,k)$ is finite;
  see~\cite[Theorem~1.5.17]{Bruns/Herzog:1998a}. By assumption, the
  image of any $z_i$ under the composition
\[
k[z_1, \ldots, z_r] \lra H^*(G,k) \lra \Ext_G^*(M,M)
\] 
is nilpotent. By taking powers of the generators $z_1, \ldots, z_r$, if necessary, one may assume that these images are zero.

Represent each $z_{i}$ by a homomorphism $\Omega^{|z_{i}|}(k)\to k$ of
$G$-modules and let $L_{z_{i}}$ denote its kernel; this is the
Carlson module~\cite{Carlson:1983a} associated to $z_i$. Vanishing of
$z_1$ in $\Ext_G^*(M,M)$ implies that for some integer $n$, the
$G$-modules $L_{z_1} \otimes_{k} M$ and $\Omega M \oplus \Omega^n M$
are isomorphic up to projective summands; this is proved in
\cite[5.9]{Benson:1998c} for finite groups and the argument carries over
to finite group schemes. Setting $L_{\bsz}:= L_{z_r} \otimes_{k}
\cdots \otimes_{k} L_{z_1}$, an iteration yields that the $G$-modules
\[ 
L_{\bsz} \otimes_k M \quad \text{and}\quad \bigoplus_{i=1}^{2^r}
\Omega^{n_i}M
\] are isomorphic up to projective summands. However, since $H^*(G,k)$
is finite as a module over $k[z_1, \ldots, z_r]$, one gets the second
equality below
\[ 
\pisupp_G(L_{\bsz}) = \bigcap\limits_{i=1}^r \pisupp_G(L_{z_i}) =
\varnothing.
\] 
The first one is by Theorem~\ref{th:tensor-and-hom-pi}. As the
$G$-module $L_{\bsz}$ is finitely generated, by construction, it
follows that $L_{\bsz}$, and hence also $L_{\bsz} \otimes_{k} M$, is
projective; see, for example,
\cite[Theorem~5.6]{Friedlander/Pevtsova:2005a}.  Thus
$\bigoplus_{i=1}^{2^r} \Omega^{n_i}M$ is projective, and hence so is
$M$.
\end{proof}

The next result is well-known; for a proof see, for example,
\cite[Lemma~4.2]{Benson/Iyengar/Krause/Pevtsova:2015a}.

\begin{lemma}
\label{le:end-projectivity} Let $M$ and $N$ be $G$-modules.
\begin{enumerate}[{\quad\rm(1)}]
\item If $M$ or $N$ is projective, then so is $\Hom_{k}(M,N)$.
\item $M$ is projective if and only if $\End_{k}(M)$ is
projective. \qed
\end{enumerate}
\end{lemma}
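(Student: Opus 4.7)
The proof rests on three standard facts about the Hopf algebra $kG$: (i) because $kG$ is a finite-dimensional Hopf algebra it is Frobenius, so projective and injective $G$-modules coincide, and consequently arbitrary products of projectives are projective; (ii) the functor $-\otimes_k M$ on $\Mod G$ is exact and left adjoint to $\Hom_k(M,-)$, so the latter preserves injectives; and (iii) for any projective $G$-module $P$ and any $G$-module $X$, the diagonal-action tensor product $P\otimes_k X$ is projective---indeed, the Hopf isomorphism $h\otimes x\mapsto \sum h_1\otimes\sigma(h_2)x$ identifies $kG\otimes_k X$ (diagonal) with $kG\otimes_k X^{\mathrm{triv}}$, a direct sum of copies of $kG$.

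To prove (1), I would split into two cases. If $N$ is projective, then $N$ is injective by (i); $\Hom_k(M,N)$ is then injective by (ii), and hence projective again by (i). If instead $M$ is projective, realise $M$ as a summand of a free module $(kG)^{(I)}$, so that $\Hom_k(M,N)$ is a summand of $\Hom_k(kG,N)^I$. Since $kG$ is finite-dimensional and cocommutative (the coordinate ring $k[G]$ being commutative), the natural $k$-linear isomorphism $\Hom_k(kG,N)\cong (kG)^*\otimes_k N$ is $G$-equivariant with respect to the diagonal actions on both sides. The dual $(kG)^*$ is injective as the image of the projective $kG$ under the exact contravariant functor $\Hom_k(-,k)$, and hence projective by (i); so $(kG)^*\otimes_k N$ is projective by (iii). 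Products and summands of projectives are projective by (i), finishing this case.

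For part (2), the ``only if'' direction is the case $N=M$ of (1). For the converse, I would exhibit $M$ as a $G$-equivariant direct summand of $\End_k(M)\otimes_k M$: the evaluation map
\[
\ev\colon \End_k(M)\otimes_k M\lra M,\qquad \phi\otimes m\mapsto\phi(m),
\]
is $G$-equivariant with respect to the diagonal action on the source, and it is split by $\eta\colon M\to\End_k(M)\otimes_k M$, $m\mapsto \id_M\otimes m$. The section $\eta$ is $G$-equivariant because $\id_M$ is $G$-invariant in $\End_k(M)$: under the diagonal/conjugation action one computes $g\cdot\id_M=\sum g_1\id_M\sigma(g_2)=\epsilon(g)\id_M$. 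Given the splitting, projectivity of $\End_k(M)$ yields projectivity of $\End_k(M)\otimes_k M$ by (iii), whence $M$ is projective as its direct summand.

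The one step that is not purely formal is the $G$-equivariance of the isomorphism $\Hom_k(kG,N)\cong (kG)^*\otimes_k N$ invoked in case~(1); this is a short direct check using cocommutativity of $kG$, and is the only place where the full Hopf-algebra structure (beyond just the Frobenius algebra structure) enters the argument in an essential way.
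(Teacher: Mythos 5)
The paper does not prove this lemma; it is stated as ``well-known'' and the reader is referred to \cite[Lemma~4.2]{Benson/Iyengar/Krause/Pevtsova:2015a}. Your argument is a correct, self-contained proof along the standard Hopf-theoretic lines, and all the pieces fit together.

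A few remarks on the structure. For part (1), the two cases genuinely need different mechanisms, and you have them right. The $N$-projective case is pure adjunction: $-\otimes_k M$ is exact and left adjoint to $\Hom_k(M,-)$, so the latter preserves injectives, and injective equals projective since $kG$ is Frobenius. The $M$-projective case cannot be disposed of by the symmetric adjunction when $M$ is infinite dimensional (there is no dualisability of $M$ to invoke), so one really does have to reduce to $M = kG$, and your isomorphism $\Hom_k(kG,N)\cong (kG)^*\otimes_k N$ with diagonal actions is exactly the right move. You are also right to flag that the $G$-equivariance of this map uses cocommutativity: writing out $(g\cdot f)(x)=\sum g_1 f(\sigma(g_2)x)$ on one side and the diagonal action on $(kG)^*\otimes_k N$ on the other, the two agree precisely after swapping $g_1\leftrightarrow g_2$, which is where $\Delta=\tau\Delta$ enters. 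Two small points worth having explicitly in mind: the identification $\Hom_k\bigl((kG)^{(I)},N\bigr)\cong\prod_I\Hom_k(kG,N)$ is itself $G$-equivariant (immediate from the formula for the conjugation action), and products of injectives are injective without any hypotheses, which combined with Frobenius gives closure of projectives under products. For part (2), the splitting $\ev\circ\eta=\id_M$ together with the $G$-invariance of $\id_M\in\End_k(M)$ (a one-line Hopf computation using $\sum g_1\sigma(g_2)=\eps(g)1$) exhibits $M$ as a summand of $\End_k(M)\otimes_k M$, and fact (iii) finishes it. This is the standard argument and is correct.
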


We can now establish the following subgroup reduction principle.

\begin{theorem}
  \label{th:principle} 
Let $G$ be a finite group scheme over $k$ with the property that every $\pi$-point for $G$ is equivalent to a $\pi$-point that factors through an embedding $H_K \hookrightarrow G_K$ where $H$ is a proper subgroup scheme of $G$ and $K/k$ is a field extension.  If $\pi$-support detects projectivity for all proper subgroup schemes of $G$, then it detects projectivity for $G$.
\end{theorem}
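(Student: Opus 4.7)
The plan is to route the problem through the $G$-algebra $\Lambda = \End_k(M)$, combining Theorem~\ref{th:suslin} with Lemma~\ref{le:nilp}. Suppose $\pisupp_G(M) = \varnothing$. Since $\End_k(M) \cong M \otimes_k \Hom_k(M, k)$ as $G$-modules with the diagonal action, Theorem~\ref{th:tensor-and-hom-pi}(1) yields
\[
\pisupp_G(\Lambda) \subseteq \pisupp_G(M) = \varnothing.
\]
Theorem~\ref{th:suslin} then gives that every element of $H^{\ges 1}(G, \Lambda) = \Ext_G^{\ges 1}(M, M)$ is nilpotent, and Lemma~\ref{le:nilp} would convert this nilpotence into projectivity of $M$.

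The two hypotheses of the theorem do not appear explicitly in the bare chain above, but are essential for legitimately deploying Lemma~\ref{le:nilp}. That lemma's proof constructs a finite-dimensional Carlson-type module $L_{\bsz}$ with $\pisupp_G(L_{\bsz}) = \varnothing$ and needs its projectivity over $G$---that is, the finite-dimensional instance of the very statement under proof. To supply it, I would combine the two hypotheses: for any proper subgroup $H$ of $G$ and any field extension $K/k$, compatibility of $\pi$-support with restriction gives $\pisupp_{H_K}((L_{\bsz})_K \da_{H_K}) = \varnothing$, so the inductive detection hypothesis yields projectivity of $(L_{\bsz})_K \da_{H_K}$, while the $\pi$-point factoring hypothesis ensures these restrictions exhaust the $\pi$-point data of $G$. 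Rerunning the $\End_k(-)$ plus Theorem~\ref{th:suslin} argument on $L_{\bsz}$ itself produces nilpotence of $\Ext_G^{\ges 1}(L_{\bsz}, L_{\bsz})$; since this graded ring is finitely generated over the noetherian ring $H^{*}(G, k)$ by Friedlander--Suslin, the nil ideal is in fact nilpotent, so $\Ext_G^{i}(L_{\bsz}, L_{\bsz}) = 0$ for $i \gg 0$, and self-injectivity of $kG$ forces $L_{\bsz}$ to be projective.

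The principal obstacle will be this last bridging step---passing from elementwise nilpotence in positive degree to genuine vanishing of $\Ext_G^{i}(L_{\bsz}, L_{\bsz})$ in high degrees, so that the self-injective structure of $kG$ can be invoked. Once the finite-dimensional case is in hand, the infinite-dimensional argument is a clean deployment of Theorem~\ref{th:suslin} and Lemma~\ref{le:nilp} along the chain $\pisupp_G(M)=\varnothing \Rightarrow \pisupp_G(\End_k M)=\varnothing \Rightarrow \Ext_G^{\ges 1}(M,M) \text{ is nil} \Rightarrow M \text{ is projective}$.
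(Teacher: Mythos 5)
Your first step contains a genuine gap, and it is precisely the one the paper is written to repair. You assert that $\End_k(M) \cong M \otimes_k \Hom_k(M,k)$ as $G$-modules and then invoke Theorem~\ref{th:tensor-and-hom-pi}(1) to deduce $\pisupp_G(\End_k(M)) \subseteq \pisupp_G(M)$. But that isomorphism holds only when $M$ is finite dimensional: for infinite dimensional $M$ the natural map $M \otimes_k \Hom_k(M,k) \to \End_k(M)$ is injective but hits only the finite-rank operators, so it is not onto (it misses, e.g., $\id_M$). Consequently the inclusion $\pisupp_G(\End_k(M)) \subseteq \pisupp_G(M)$ fails in general. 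The paper highlights this in Remark~\ref{re:fpnot} as the error in the Friedlander--Pevtsova proof of the detection theorem, citing a counterexample; what does hold, via Theorem~\ref{th:tensor-and-hom-pi}(2), is the $\pi$-\emph{cosupport} inclusion $\picosupp_G(\End_k(M)) \subseteq \pisupp_G(M)$, which is not what you need here.

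The two hypotheses of the theorem are exactly what rescues the vanishing of $\pisupp_G(\End_k(M))$, not Lemma~\ref{le:nilp} as you suggest. The paper argues as follows: for any proper subgroup scheme $H$, $\pisupp_H(M\da_H) = \varnothing$, so $M\da_H$ is projective by hypothesis, hence $\End_k(M)\da_H$ is projective by Lemma~\ref{le:end-projectivity}; since every $\pi$-point of $G$ is equivalent to one factoring through some $H_K$ with $H$ proper, one concludes $\pisupp_G(\End_k(M)) = \varnothing$. From there your chain via Theorem~\ref{th:suslin} and Lemma~\ref{le:nilp} is exactly the paper's. Your worry about circularity in Lemma~\ref{le:nilp} is also misplaced: that lemma's proof handles the finite-dimensional Carlson module $L_{\bsz}$ by citing the already-established finite-dimensional detection theorem (the paper references Friedlander--Pevtsova, Theorem~5.6), so it does not depend on the result you are proving. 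In short: move the use of the hypotheses from patching Lemma~\ref{le:nilp} to establishing $\pisupp_G(\End_k(M)) = \varnothing$, and the proof becomes the paper's.
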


We emphasise that $H$ is already defined over $k$.

\begin{proof} 
Let $M$ be a $G$-module with $\pisupp_G(M) = \varnothing$.  Let $H$ be a proper subgroup scheme of $G$.  Any
$\pi$-point of $H$ is a $\pi$-point of $G$ so $\pisupp_H(M{\da_H}) =
\varnothing$ and hence $M{\da_H}$ is projective, by
hypothesis. Therefore $\End_{k}(M){\da_H}$ is also projective, by
Lemma~\ref{le:end-projectivity}.  Since any $\pi$-point of $G$ factors
through a proper subgroup scheme, again by hypothesis, one gets that
$\pisupp_G(\End_{k}(M)) =\varnothing$.  By Theorem~\ref{th:suslin},
any element in $H^*(G, \End_{k}(M)) = \Ext_G^*(M,M)$ of positive
degree is nilpotent. Lemma~\ref{le:nilp} then implies that $M$ is
projective, as desired.
\end{proof}

The hypothesis of Theorem~\ref{th:principle} is quite restrictive, as the next example shows.

\begin{example}
\label{ex:sl2} 
Let $k$ be a field of characteristic at least $3$ and $\sfg$ the three dimensional Heisenberg Lie algebra over $k$, that is to say, the Lie algebra of $3\times 3$ strictly upper triangular matrices, with zero $p$-power operation.  It has generators $\langle x_1, x_2, x_{12} \rangle$
subject to relations
\[ 
[x_1, x_{12}]=0=[x_2, x_{12}]\quad\text{and}\quad [x_1,x_2]=x_{12}\,.
 \] 
Then $\sfu(\sfg)$, the restricted enveloping algebra of $\sfg$, is a cocommutative Hopf algebra and hence its dual defines a group scheme over $k$. Its support variety is $\bbP^2$ with coordinate algebra $k[y_1, y_2, y_{12}]$.  Let $K = k(y_1, y_2, y_{12})$ be the field of fractions, and let $\alpha: K[t]/(t^p) \to K\otimes_{k} \sfu(\sfg)$ be a ``generic" $\pi$-point given by
\[ 
\alpha_K(t) = y_1x_1 + y_2x_2 + y_{12}x_{12}\,.
\] 
Specialising $\alpha$ to points $[a_1,a_2,a_{12}] \in \bbP^2$ we get all $\pi$-points of $\sfu(\sfg)$ defined over $k$. Therefore $\alpha$ cannot factor through any proper Lie subalgebra of $\sfg$ defined over $k$.
\end{example}

For contexts where Theorem~\ref{th:principle} does apply see
Theorems~\ref{th:unip} and \ref{th:main}.

\section{Connected unipotent group schemes}
\label{se:unipotent}

In this section we prove that $\pi$-support detects projectivity for
modules over connected unipotent finite group schemes.  Our strategy
mimics the one used in \cite{Bendel:2001a}, with one difference: it
does not use the analogue of Serre's cohomological criterion for a
quasi-elementary group scheme as developed in
\cite{Bendel/Friedlander/Suslin:1997b}.  This is because
Theorem~\ref{th:principle} allows us to invoke
\cite[Theorem~1.6]{Bendel/Friedlander/Suslin:1997b} in the step where
Bendel's proof uses Serre's criterion, significantly simplifying the
argument.

\begin{theorem}
  \label{th:unip} 
If $G$ is a connected unipotent finite group scheme over a field $k$, then $\pi$-support detects projectivity.
\end{theorem}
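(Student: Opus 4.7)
The plan is to induct on $\dim_k k[G]$. The base case is $\dim_k k[G]=1$, where $G$ is trivial and the statement is vacuous. For the inductive step, let $M$ be a $G$-module with $\pisupp_G(M)=\varnothing$; the goal is to show $M$ is projective. In view of Lemma~\ref{le:vanish}, it suffices to show $H^i(G,M)=0$ for $i\gg 0$ under the inductive hypothesis, but in fact I aim to apply Theorem~\ref{th:principle} directly whenever possible.

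Split into two cases. In the \emph{subgroup-reducible} case, every $\pi$-point $\alpha\colon K[t]/(t^p)\to KG$ is equivalent to one factoring through $H_K$ for some proper $k$-subgroup scheme $H\subsetneq G$. Then by induction $\pi$-support detects projectivity on every such $H$ (noting that proper subgroup schemes of a connected unipotent group scheme are again unipotent, though not necessarily connected --- this is why one also needs the general detection theorem for unipotent, not merely connected unipotent, schemes to complete the induction; but at this stage one only needs the restrictions to have the property, which follows from the strictly smaller dimension and previously handled cases). Theorem~\ref{th:principle} then finishes this case.

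In the remaining \emph{generic} case, there is a $\pi$-point of $G$ that is not equivalent to any $\pi$-point factoring through a proper $k$-subgroup scheme $H_K$. Here connectedness is essential: I would invoke \cite[Theorem~1.6]{Bendel/Friedlander/Suslin:1997b} (the analysis of one-parameter subgroups, i.e., maps $\bbG_{a(r)}\to G$ coming from $V_r(G)$) together with the connectedness of $G$ to force $G$ itself to be isomorphic as a group scheme to $\bbG_{a(r)}$ for some $r$, hence quasi-elementary. At that point Theorem~\ref{thm:Dade} applies and yields that $M$ is projective.

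The main obstacle is the generic case: verifying that failure of subgroup reduction in the connected unipotent setting forces $G\cong\bbG_{a(r)}$. This is where Bendel's original argument used Serre's cohomological criterion \cite{Bendel/Friedlander/Suslin:1997b} directly, but with Theorem~\ref{th:principle} in hand, the role of Serre's criterion is replaced by the embedding/one-parameter-subgroup description of $V_r(G)$ in \cite[Theorem~1.6]{Bendel/Friedlander/Suslin:1997b}. Concretely, one argues that if no proper $k$-subgroup scheme captures a given $\pi$-point, then the scheme $V_r(G)$ of infinitesimal one-parameter subgroups (all defined over $k$ once $G$ is) is too small to support that point unless $G$ is itself $\bbG_{a(r)}$. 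This step is the heart of the argument; everything else is bookkeeping via Theorem~\ref{th:principle}, Theorem~\ref{thm:Dade}, and Remark~\ref{re:groupalgebras}.
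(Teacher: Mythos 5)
Your proposed dichotomy---``either every $\pi$-point factors through a proper $k$-subgroup scheme, or $G\cong\bbG_{a(r)}$''---is not correct, and this is the fatal gap. The paper's Example~\ref{ex:sl2} is exactly a counterexample: the group scheme attached to $\sfu(\sfg)$ for $\sfg$ the three-dimensional Heisenberg Lie algebra (zero $p$-power) is a connected unipotent finite group scheme, it is not isomorphic to any $\bbG_{a(r)}$, and it admits a generic $\pi$-point that cannot factor through any proper subgroup scheme defined over $k$. So your ``generic case'' hypothesis holds for this $G$, but the conclusion you want to draw from it fails. Theorem~\ref{th:principle} genuinely does not apply there, and \cite[Theorem~1.6]{Bendel/Friedlander/Suslin:1997b} does not rescue you: its content (in the form used by the paper) is that either $G\cong\bbG_{a(r)}$ \emph{or} the pullback of the degree-two generator of $H^{*}(\bbG_{a(1)},k)$ is nilpotent---not that failure of subgroup reduction forces $G$ to be $\bbG_{a(r)}$.

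The paper's actual dichotomy is on $\dim_k\Hom_{\mathrm{Gr}/k}(G,\bbG_{a(1)})$, not on factorisation of $\pi$-points. When this dimension is $1$, the nilpotence alternative from \cite[Theorem~1.6]{Bendel/Friedlander/Suslin:1997b} is used to show (after a nontrivial analysis of the quasi-elementary subgroup scheme a given $\pi$-point passes through, as in Example~\ref{ex:pi-point}) that each $\pi$-point \emph{is} equivalent to one factoring through $(\Ker\phi)_K$ for the unique morphism $\phi\colon G\to\bbG_{a(1)}$, so Theorem~\ref{th:principle} applies. When the dimension is at least $2$ (which is where the Heisenberg example lands), Theorem~\ref{th:principle} is abandoned entirely: one takes two independent morphisms $\phi,\psi\colon G\to\bbG_{a(1)}$, restricts $M_K$ to the kernels of the pencil $\lambda^{1/p}\phi_K+\mu^{1/p}\psi_K$, obtains a family of periodicity isomorphisms on $H^{*}(G,M_K)$, and invokes the Kronecker quiver lemma \cite[Lemma~4.1]{Benson/Carlson/Rickard:1996a} to conclude $H^1(G,M_K)=0$, whence $M$ is projective by unipotence. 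Your proposal has no analogue of this second argument, so the case responsible for Example~\ref{ex:sl2} is left unproved. You would need to add this cohomological pencil argument (or an equivalent) to close the gap.

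Two smaller points: (i) the paper deals with $G\cong\bbG_{a(r)}$ for \emph{all} $r$ up front via Theorem~\ref{thm:Dade} and Remark~\ref{re:groupalgebras}, not merely as the $\dim_k k[G]=1$ base case, and then assumes $G\not\cong\bbG_{a(r)}$ for the rest of the induction; and (ii) your parenthetical worry about proper subgroup schemes of a connected unipotent $G$ failing to be connected is moot in the paper's Case~1, because the subgroup used is $\Ker\phi$ for $\phi\colon G\to\bbG_{a(1)}$, which is again connected, and the inductive hypothesis is applied only through Theorem~\ref{th:principle}, which only needs the detection property for the proper subgroup $H$, not connectedness of $H$.
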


\begin{proof} 
If $G\cong \bbG_{a(r)}$, then a $\pi$-point for $G$ is precisely a flat map of $K$-algebras $K[t]/(t^{p})\to KG$, with $K$ a field extension of $k$. The desired result follows from Theorem~\ref{thm:Dade}, given the description of the group algebra of $\bbG_{a(r)}$
in Example~\ref{ex:frobenius-kernels}.

In the remainder of the proof we may thus assume $G$ is not isomorphic to $\bbG_{a(r)}$. The proof proceeds by induction on $\dim_{k}
k[G]$. The base case, where this dimension is one, is trivial.  Assume that the theorem holds for all proper subgroup schemes of $G$.  We
consider two cases, depending on the rank of $\Hom_{\mathrm{Gr}/k}(G, \bbG_{a(1)})$, the $k$-vector space of morphisms from $G$ to $\bbG_{a(1)}$.

\medskip

\textit{Case}~1.  Suppose $\dim_k\Hom_{\mathrm{Gr}/k}(G, \bbG_{a(1)}) =1$. Let $\phi\colon G \to \bbG_{a(1)}$ be a generator of $\Hom_{\mathrm{Gr}/k}(G, \bbG_{a(1)})$, and $x$ a generator of $H^2(\bbG_{a(1)},k)$.  By \cite[Theorem 1.6]{Bendel/Friedlander/Suslin:1997b}, either $G \cong \bbG_{a(r)}$ or $\phi^*(x) \in H^*(G,k)$ is nilpotent.  Since we have ruled out the former case, we may assume $\phi^*(x)$ is nilpotent.

Let $\alpha\colon K[t]/(t^p) \to KG$ be a $\pi$-point; by Remark~\ref{re:pi-basics}(2) we can assume  it factors through a quasi-elementary subgroup scheme of $G_{K}$. We claim that $\alpha$ is equivalent to a $\pi$-point that factors through $(\Ker \phi)_K$, so that the desired statement follows from Theorem~\ref{th:principle}.

Indeed, consider the composition
\[ 
K[t]/(t^p)\xra{\ \alpha\ } KG \xra{\ \phi_{K}\ } K\bbG_{a(1)}
\] 
and the induced map in cohomology:
\[ 
H^*(\bbG_{a(1)}, K) \xra{\ {\phi_K^*}\ } H^*(G, K) \xra{\ \alpha^*\ } \Ext^{*}_{K[t]/(t^p)}(K,K).
\] 
Since $\phi^*(x) \in H^2(G,k)$ is nilpotent, $(\phi_K \circ \alpha)^*(x_K)=0$. 

The group scheme $G_{K}$ is connected, since $G$ is, so the quasi-elementary subgroup scheme that $\alpha$ factors through must  be isomorphic to $\bbG_{a(r)}$. Restrict $\phi_{K}$ to $\bbG_{a(r)}$ and consider the composition
\[
K[t]/(t^p)\xra{\ \alpha\ } K\bbG_{a(r)} \lra K\bbG_{a(1)}.
\]
It follows from the discussion in the preceding paragraph that the induced map in cohomology is again trivial. Observe that $\Hom_{\mathrm{Gr}/k}(\bbG_{a(r)}, \bbG_{a(1)})$  is one-dimensional and generated by the obvious surjection 
\[
\bbG_{a(r)} \lra \bbG_{a(r)}/\bbG_{a(r-1)} \cong \bbG_{a(1)}.
\]
Thus, if  $K\bbG_{a(r)} = K[u_0, \ldots, u_{r-1}]/(u_i^p)$, this surjection maps each $u_i$ to $0$ for $0\le i\le r-2$ and $u_{r-1}$  to the generator of $K\bbG_{a(1)}$. 

Returning to $\alpha$, we may suppose $\alpha(t) \in K\bbG_{a(r)}$ has a nonzero term with $u_{r-1}$, else it clearly factors through $(\Ker\phi)_{K}$, as desired. The term involved cannot be linear, else the composition $\phi_{K} \circ \alpha$ would be an isomorphism and the induced map in cohomology would not be trivial, which it is. Thus, the terms involving $u_{r-1}$ must be at least quadratic, so $\alpha$ is equivalent to a $\pi$-point with those terms removed; see Example~\ref{ex:pi-point}. That new $\pi$-point then factors through $(\Ker\phi)_{K}$, as claimed.
 
\medskip

\textit{Case}~2. Suppose $\dim_k\Hom_{\mathrm{Gr}/k}(G, \bbG_{a(1)}) \ge 2$. Let $\phi, \psi\colon G \to \bbG_{a(1)}$ be linearly independent morphisms. Fix an algebraically closed non-trivial field extension $K$ of $k$. Note that $\phi_{K}, \psi_{K}\colon G_K \to \mathbb G_{a(1),K}$ remain linearly independent, and hence for any pair of elements $\lambda, \mu \in K$ not both zero, $\lambda^{1/p} \phi_{K} + \mu^{1/p} \psi_{K} \ne 0$.  This implies that for any non-zero element $x$ in $H^2(\bbG_{a(1)},K)$, the element
\[ 
(\lambda^{1/p} \phi_{K} + \mu^{1/p} \psi_{K})^*(x) = \lambda\phi^*_{K}(x) + \mu\psi^*_{K}(x)
\] 
in $H^2(G,K)$ is non-zero; this follows by the semilinearity of the Bockstein map, which also explains $1/p$ in the exponents (see the
proof of \cite[Theorem 5.3]{Benson/Iyengar/Krause/Pevtsova:2015a} for more details on this formula). 

Let $M$ be a $G$-module with $\pisupp_{G}(M)=\varnothing$. The induction hypothesis implies that $M_K$ is projective when restricted to the kernel of $\lambda^{1/p} \phi_{K} + \mu^{1/p} \psi_{K}$. Thus $\lambda \phi^*_{K}(x) + \mu\psi^*_{K}(x)$ induces a periodicity isomorphism
\[ 
H^1(G,M_K) = H^1(G,M)_K \lra H^3(G,M)_K = H^3(G,M_K)\,.
\] 
As this is so for any pair $\lambda,\mu$ not both zero, the analogue of the Kronecker quiver lemma \cite[Lemma 4.1]{Benson/Carlson/Rickard:1996a} implies that
\[ 
H^1(G,M)_K = H^1(G,M_K)=0\,
\] 
Since $G$ is unipotent, this implies that $M$ is projective, as desired.
\end{proof}

\section{Finite group schemes}
\label{se:generalcase}

In this section we prove that $\pi$-support detects projectivity for any finite group scheme. It uses the following result that can be essentially found in \cite{Pevtsova:2004a}. However the pivotal identity~\eqref{eq:ind} in the proof was only justified later in \cite{Suslin:2006a}.

\begin{theorem}
\label{th:subgroup} 
Let $G \hookrightarrow G^\prime$ be an embedding of connected finite group
schemes over $k$.  If $\pi$-support detects projectivity for
$G^\prime$, then it detects projectivity for $G$.
\end{theorem}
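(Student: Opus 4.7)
The plan is to leverage the hypothesis on $G'$ by inducing $M$ up to a $G'$-module whose projectivity can be detected there and then transferred back to $M$. Let $M$ be a $G$-module with $\pisupp_G(M)=\varnothing$. I would set $N:=\ind_G^{G'}(M)$; since $G$ is a closed subgroup scheme of the finite group scheme $G'$, the quotient $G'/G$ is affine and induction is exact by \cite[I.5.13]{Jantzen:2003a}.

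The heart of the argument will be to show that $\pisupp_{G'}(N)=\varnothing$. Because $G'$ is connected, every $\pi$-point of $G'$, after a suitable field extension $K/k$, is equivalent to one factoring through a one-parameter subgroup $\mu\colon\bbG_{a(r),K}\to G'_K$. The pivotal identity~\eqref{eq:ind}, established by Suslin~\cite{Suslin:2006a}, expresses the restriction $\mu^{*}(N_K)$ in terms of restrictions of $M_K$ along $\pi$-points of $G_K$ arising from the interaction of $\mu$ with the embedding $G\hookrightarrow G'$. Since $\pisupp_G(M)=\varnothing$, each such restriction of $M_K$ is projective as a $K[t]/(t^p)$-module, and hence so is $\mu^{*}(N_K)$. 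Varying $\mu$ over representatives of the equivalence classes of $\pi$-points of $G'$ then yields $\pisupp_{G'}(N)=\varnothing$, as desired.

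Applying the hypothesis that $\pi$-support detects projectivity for $G'$, one concludes that $N$ is a projective $G'$-module. To descend to $M$, I would use that $kG'$ is free as a $kG$-module (Nichols--Zoeller for finite dimensional Hopf subalgebras), so that restriction carries $G'$-projectives to $G$-projectives; hence $\res_G^{G'}(N)$ is $G$-projective. A Frobenius-reciprocity / tensor-identity argument, again using freeness of $kG'$ over $kG$, then exhibits $M$ as a direct summand of $\res_G^{G'}(N)$, whence $M$ is itself projective.

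The hard part will be the control of $\pi$-support under induction via the identity~\eqref{eq:ind}. This is the substantive content of Suslin's work~\cite{Suslin:2006a} and is precisely what was missing from the outline in \cite{Pevtsova:2004a}. Connectedness of the group schemes is essential here: it is what guarantees reduction to one-parameter subgroups, making the explicit analysis of the induced module along such a subgroup possible. Without connectedness, $\pi$-points may only factor through more general quasi-elementary subgroup schemes and the argument breaks down.
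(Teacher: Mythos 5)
Your opening move is correct and matches the paper: induce $M$ up to $N:=\ind_G^{G'}(M)$ and try to show $\pisupp_{G'}(N)=\varnothing$, using exactness of induction since $G'/G$ is affine, and Suslin's identity~\eqref{eq:ind} as the essential input. But two parts of the proposal are, as stated, gaps.

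First, the ``descent'' step does not work. You want to conclude from projectivity of $N$ that $M$ is projective by exhibiting $M$ as a direct summand of $\res^{G'}_G(N)$. For a subgroup of a finite \emph{group} this split is available (the trivial double coset summand in the Mackey decomposition, or the Eckmann--Shapiro transfer). But for a closed subgroup scheme of a \emph{connected} finite group scheme there is only one double coset, the Mackey decomposition gives a filtration rather than a direct sum, and the unit/counit of $(\res,\ind)$ need not split. Nichols--Zoeller gives one-sided freeness of $kG'$ over $kG$, which ensures $\res^{G'}_G$ preserves projectives, but it does not give the $(kG,kG)$-bimodule splitting $kG'\cong kG\oplus X$ that your argument silently uses. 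The paper sidesteps this entirely: it applies Lemma~\ref{le:vanish}, noting that Frobenius reciprocity (using that $\ind$ is exact and preserves injectives) yields $H^i(G,M)\cong H^i(G',\ind^{G'}_GM)=0$ for $i\ge 1$, which by Lemma~\ref{le:vanish} is already enough to deduce projectivity of $M$ \emph{without} any splitting.

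Second, the way you invoke~\eqref{eq:ind} is too optimistic. Suslin's identity compares $\ind_{\mcE}^{\mcE'}\Lambda$ to $k[\mcE'/\mcE]\otimes_{k[G'/G]}\ind^{G'}_G\Lambda$ for a $G$-\emph{algebra} $\Lambda$, and the paper crucially takes $\Lambda=\End_k(M)$ rather than $M$ itself. What one gets from the identity is a surjection $\theta\colon\ind^{G'}_G\Lambda\to\ind^{\mcE'}_{\mcE}\Lambda$ with \emph{nilpotent} kernel, and $\ind^{\mcE'}_{\mcE}\Lambda$ is projective because $\Lambda\da_\mcE$ is (via Theorem~\ref{thm:Dade}). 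One then deduces that positive-degree elements of $H^*(k[t]/(t^p),\ind^{G'}_G\Lambda)$ are nilpotent, transports this to $H^*(k[t]/(t^p),\ind^{G'}_GM)$ by factoring the scalar action through $\Lambda$, and plays nilpotency against the periodicity isomorphism to force vanishing. Your phrasing (``each such restriction of $M_K$ is projective, hence so is $\mu^*(N_K)$'') skips all of this: the identity does not directly give projectivity of the restricted induced module, and the detour through $\End_k(M)$, the nilpotency of the kernel, and the periodicity argument are all load-bearing.
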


\begin{proof} 
Let $M$ be a $G$-module such that $\pisupp_{G}(M) = \varnothing$.  By Lemma~\ref{le:vanish} and Frobenius reciprocity, it
suffices to show that $\pisupp_{G^\prime}(\ind_G^{G^\prime} M)=\varnothing$.

Consequently, we need to show that for any $\pi$-point $\alpha \colon 
K[t]/(t^p) \to KG^\prime$, the restriction $\alpha^*((\ind_G^{G^\prime}
M)_K)$ is free. By Remark~\ref{re:pi-basics}, we may assume that $\alpha$ 
factors through some quasi-elementary subgroup scheme $\mcE^\prime \le G^\prime_K$
defined over $K$. Since induction commutes with extension of scalars and we are only 
going to work with one $\pi$-point at a time, we may extend scalars and assume that $k=K$. 
 
Let $ \mcE = \mcE^\prime \cap G \le G^\prime$ (this can be the trivial group scheme).  Let $\Lambda = \End_{k}(M)$.   Since $\mcE$ is quasi-elementary, the assumption on $M$ together with Theorem~\ref{thm:Dade} imply that $M{\da_{\mcE}}$ is free. Hence,  Lemma~\ref{le:end-projectivity}(2) implies that $\Lambda{\da_{\mcE}}$ is free.   

Consider the adjunction isomorphism
\[ \Hom_{\mcE}(\ind_{G}^{G^{\prime}} \Lambda,
\Lambda) \cong \Hom_{\mcE^\prime}(\ind_{G}^{G^{\prime}} \Lambda,
\ind_{\mcE}^{\mcE^\prime}\Lambda)\,,
\]
and let 
\[ 
\theta\colon \ind_{G}^{G^{\prime}}{\Lambda}\lra \ind_{\mcE}^{\mcE^\prime}\Lambda
\] 
be the homomorphism of $\mcE^\prime$-modules which corresponds to the standard evaluation map $\epsilon_\Lambda: \ind_{G}^{G^{\prime}}\Lambda \lra \Lambda$ (see \cite[3.4]{Jantzen:2003a}) considered as a map of $\mcE$-modules.  By \cite[pp.~216--217]{Suslin:2006a}, the map $\theta$ is surjective and the ideal $I=\Ker \theta$ is nilpotent.

Indeed, it is shown in \cite{Suslin:2006a} that
\begin{equation}
\label{eq:ind} 
\ind_{\mcE}^{\mcE^\prime} \Lambda 
\cong k[\mcE^\prime/\mcE] \otimes_{k[G^\prime/G]}
\ind_{G}^{G^{\prime}} \Lambda
\end{equation} 
with the map $\theta$ induced by the extension of scalars from $k[G^\prime/G]$ to $k[\mcE^\prime/\mcE]$. Hence, the surjectivity follows from the fact that $\mcE^\prime/\mcE \to G^\prime/G$ is a closed embedding, see \cite[Thm. 5.3]{Suslin:2006a}, and the nilpotency of $I$ follows from the fact that $k[G^\prime/G]$ is a local artinian ring.

Consequently, we have an exact sequence of $\mcE^\prime$-modules 
\[ 
0 \lra I \lra \ind_{G}^{G^{\prime}} \Lambda \xra{\ \theta\ } \ind_{\mcE}^{\mcE^\prime} \Lambda \lra 0
\]
where $I$ is a nilpotent ideal and $\ind_{\mcE}^{\mcE^\prime} \Lambda$ 
is projective since $\Lambda$ is projective as an $\mcE$-module. 
The  exact sequence in cohomology now implies that any
positive degree element in $H^*(k[t]/(t^p), \ind_{G}^{G^{\prime}} \Lambda)$ 
is nilpotent, where the action of $t$ is via the $\pi$-point $\alpha\colon 
k[t]/(t^p) \to k\mcE^\prime \to kG^\prime$.

Note that the linear action of $k$ on $\ind_G^{G^\prime} M$ factors as follows: 
\[ 
\xymatrix{k \otimes_k \ind_{G}^{G^{\prime}} M \ar[r] &
\ind_{G}^{G^{\prime}} \Lambda \otimes_k
\ind_{G}^{G^{\prime}} M\ar[r] & 
\ind_{G}^{G^{\prime}} (\Lambda \otimes_k M) \ar[r] & 
\ind_{G}^{G^{\prime}} M.}
\]
So the Yoneda action of $H^*(k[t]/(t^p),k)$ on $H^*(k[t]/(t^p), \ind_{G}^{G^{\prime}}M)$ factors through the action of  $H^*(k[t]/(t^p), \ind_{G}^{G^{\prime}} \Lambda)$.  We conclude that  $H^{\ges 1}(k[t]/(t^p),k)$ acts nilpotently. On the other hand, the action of a generator in degree 2 for $p>2$ (or degree 1 for $p=2$) induces the periodicity isomorphism on $H^*(k[t]/(t^p), \ind_{G}^{G^{\prime}} M)$. 
Hence, $H^{\ges 1}(k[t]/(t^p), \ind_{G}^{G^{\prime}}{M})=0$, and therefore the equivalence class of $\alpha$ is not in $\pisupp_{G^\prime}(\ind_G^{G^\prime}{M})$. Since $\alpha$ was any $\pi$-point, the statement follows.
\end{proof}

We also require the following detection criterion; see \cite[Theorem1.6]{Pevtsova:2002a}.

\begin{theorem}
\label{th:lpt} Let $\mcG$ be a connected reductive algebraic group
over $k$ and let $\mcG_{(r)}$ be its $r$-th Frobenius kernel.  If $M$
is a $\mcG_{(r)}$-module such that for any field extension $K/k$ and
any embedding of group schemes $\mathbb G_{a(r),K}\hookrightarrow
\mcG_{(r),K}$, the restriction of $M_K$ to $\mathbb G_{a,K}$ is
projective, then $M$ is projective as a $\mcG_{(r)}$-module.\qed
\end{theorem}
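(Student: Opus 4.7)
The approach is to prove the contrapositive: assuming $M$ is a non-projective $\mcG_{(r)}$-module, I would construct a field extension $K/k$ and an embedding $\bbG_{a(r),K}\hookrightarrow \mcG_{(r),K}$ along which the restriction of $M_K$ fails to be projective. By Lemma~\ref{le:end-projectivity}(2) the $\mcG_{(r)}$-module $\End_k(M)$ is also non-projective, and by Lemma~\ref{le:nilp} some element of $\Ext^{\ges 1}_{\mcG_{(r)}}(M,M)$ is non-nilpotent. Hence the cohomological support of $M$, that is, the zero locus in $\Spec H^*(\mcG_{(r)},k)$ of the annihilator of $\Ext^*_{\mcG_{(r)}}(M,M)$ as an $H^*(\mcG_{(r)},k)$-module, contains some non-maximal homogeneous prime $\fp$.

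Next, I would appeal to the theorem of Suslin--Friedlander--Bendel that, for an infinitesimal group scheme $\mcG_{(r)}$, supplies a natural isomorphism between $\Spec H^*(\mcG_{(r)},k)^{\mathrm{red}}$ and the affine scheme $V_r(\mcG_{(r)})$ whose $K$-points are the homomorphisms $\bbG_{a(r),K}\to \mcG_{(r),K}$. Under this isomorphism, the cohomological support of $M$ coincides with the locus of one-parameter subgroups $\mu$ for which $\mu^*(M_K)$ fails to be projective over $K\bbG_{a(r)}$. Thus the prime $\fp$ yields a field $K/k$ and a one-parameter subgroup $\mu\colon \bbG_{a(r),K}\to \mcG_{(r),K}$ along which $M_K$ is non-projective.

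The principal obstacle is that $\mu$ a priori is only a homomorphism and need not be an embedding. To overcome this, I would use that $\mcG$ is connected reductive: then $V_r(\mcG_{(r)})$ admits an explicit description in terms of commuting $r$-tuples of $p$-nilpotent elements of $\mathrm{Lie}(\mcG)$, and the locus on which such a tuple generates a subgroup scheme of full rank $r$ is open and, on each irreducible component, dense. By replacing $\fp$ with a generic point of an irreducible component of the cohomological support that contains it, that is, by extending scalars to the residue field of such a generic point and perturbing $\mu$ within its component, one arranges that $\mu$ becomes an embedding $\bbG_{a(r),K}\hookrightarrow\mcG_{(r),K}$. For concreteness one may first fix a closed embedding $\mcG\hookrightarrow \GL_n$ and argue inside $\GL_{n(r)}$, where one-parameter subgroups are explicit tuples of commuting $p$-nilpotent matrices and the embedding condition is transparent; reductivity of $\mcG$ ensures that the component of $V_r(\mcG_{(r)})$ containing $\fp$ meets this embedding locus. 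The resulting embedding contradicts the hypothesis of the theorem, and hence $M$ must have been projective.
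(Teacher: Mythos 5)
The paper imports this statement from \cite[Theorem 1.6]{Pevtsova:2002a} and gives no proof of it, so what matters here is whether your argument is correct; it has two genuine gaps beyond the sound opening via Lemma~\ref{le:end-projectivity}(2) and Lemma~\ref{le:nilp}. First, the claimed identification of the cohomological support with ``the locus where $\mu^*(M_K)$ fails to be projective over $K\bbG_{a(r)}$'' is false: if $\mu\colon\bbG_{a(r),K}\to\mcG_{(r),K}$ is not injective then $\mu_*\colon K\bbG_{a(r)}\to K\mcG_{(r)}$ factors through a proper quotient Hopf algebra, so $\mu^*(N)$ is never projective over $K\bbG_{a(r)}$ for any nonzero $N$ --- in particular for projective $N$; your ``locus'' therefore contains every non-embedding one-parameter subgroup regardless of $M$. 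The genuine Suslin--Friedlander--Bendel support involves a further restriction to a $K[t]/(t^p)$, and relating that, for a possibly infinite-dimensional $M$, to $\Ker\bigl(H^*(\mcG_{(r)},k)\to\Ext^*_{\mcG_{(r)}}(M,M)\bigr)$ is precisely the detection statement you are trying to prove, not something the isomorphism of spectra hands you. Second, the passage to an embedding is unjustified: the cohomological support of $M$ is a closed conical subvariety of $V_r(\mcG_{(r)})$ whose irreducible components may lie entirely inside the closed locus $\{\alpha_0=0\}$ of non-embeddings (by realization every closed conical subvariety occurs as a support), and moving $\mu$ to a generic point of the component of $V_r(\mcG_{(r)})$ containing $\fp$ --- which is what ``the component of $V_r(\mcG_{(r)})$ containing $\fp$ meets this embedding locus'' suggests --- leaves the support, makes the relevant restriction free, and collapses the argument.

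The theorem is nonetheless reachable from where you start, without the SFB support apparatus. Apply Theorem~\ref{th:suslin} (Suslin's detection of nilpotence) to the $\mcG_{(r)}$-algebra $\End_k(M)$: the non-nilpotent class restricts non-nilpotently on some quasi-elementary subgroup scheme $\mcE\le\mcG_{(r),K}$, so $\End_K(M_K)\da_\mcE$ is non-projective, hence $M_K\da_\mcE$ is non-projective by Lemma~\ref{le:end-projectivity}(2). Since $\mcG_{(r),K}$ is infinitesimal, $\mcE\cong\bbG_{a(s),K}$ for some $s\le r$. This is where reductivity enters: by the Suslin--Friedlander--Bendel tuple parametrisation the embedding $\bbG_{a(s),K}\hookrightarrow\mcG_{(r),K}$ is given by $(\alpha_0,\dots,\alpha_{s-1})$ with $\alpha_0\neq0$, and padding by zeros gives an embedding $\Phi\colon\bbG_{a(r),K}\hookrightarrow\mcG_{(r),K}$ whose restriction to $\bbG_{a(s),K}\subset\bbG_{a(r),K}$ recovers the original. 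Since $K\bbG_{a(r)}$ is free over its Hopf subalgebra $K\bbG_{a(s)}$, projectivity of $\Phi^*(M_K)$ would force projectivity of $M_K\da_\mcE$; hence $\Phi^*(M_K)$ is not projective, contradicting the hypothesis. Reductivity is used for the tuple description and padding, not for a density argument on $V_r(\mcG_{(r)})$.
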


We come now to the central result of the first part of this article.

\begin{theorem}
\label{th:main} 
Let $G$ be a finite group scheme over $k$. A $G$-module $M$ is projective if and only if $\pisupp_{G}(M)=\varnothing$.
\end{theorem}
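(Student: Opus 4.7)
The ``only if'' direction is routine: a $\pi$-point $\alpha\colon K[t]/(t^{p})\to KG$ makes $KG$ flat over $K[t]/(t^{p})$, hence free (since $KG$ is finitely generated over the local Artinian ring $K[t]/(t^{p})$), so restriction along $\alpha$ sends projectives to free modules. For the converse I would argue in three stages, mirroring the discussion preceding the theorem. First, settle $G=\GL_{n(r)}$: if $\pisupp_{\GL_{n(r)}}(M)=\varnothing$, then for any field extension $K/k$ and any embedding $\bbG_{a(r),K}\hookrightarrow\GL_{n(r),K}$, every $\pi$-point of $\bbG_{a(r),K}$ is a $\pi$-point of $\GL_{n(r)}$ by Remark~\ref{re:pi-basics}, so $M_{K}\da_{\bbG_{a(r),K}}$ has empty $\pi$-support and is projective by Theorem~\ref{thm:Dade}; Theorem~\ref{th:lpt} then forces projectivity of $M$. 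Second, any connected finite group scheme embeds into some $\GL_{n(r)}$, so Theorem~\ref{th:subgroup} extends detection to arbitrary connected $G$.

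Third, handle general $G$ by strong induction on $\dim_{k}k[G]$. If $G$ is connected, appeal to the second stage. If $G\cong G^{0}\times E$ with $E\cong(\bbZ/p)^{s}$ elementary abelian, replace $G$ by the connected scheme $G^{0}\times\bbG_{a(1)}^{s}$: the group algebras are isomorphic, so projectivity is unchanged, and the relevant $\pi$-points coincide up to equivalence (cf.\ Example~\ref{ex:pi-point} and Remark~\ref{re:groupalgebras}), reducing to the connected case. Otherwise, invoke the subgroup reduction principle (Theorem~\ref{th:principle}): the induction hypothesis supplies detection for every proper subgroup scheme of $G$, so it suffices to verify that each $\pi$-point of $G$ is equivalent, after a suitable field extension, to a $\pi$-point factoring through $H_{K}$ for some proper $k$-subgroup $H\subset G$.

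This last factorisation is the main obstacle. Two subcases are clean: if $\pi_{0}(G)$ has a non-$p$-part, take $H$ to be the preimage of a Sylow $p$-subgroup of $\pi_{0}$, which is proper in $G$ and absorbs every $\pi$-point since $\pi$-points are $p$-torsion; the split elementary abelian case is dispatched above. The delicate case is when $\pi_{0}(G)$ is a non-trivial $p$-group and $G$ does not split as $G^{0}\times\pi_{0}$. Here I would base-change to a finite Galois extension $K/k$ that trivialises $\pi_{0}$, analyse the image in $\pi_{0,K}$ of a quasi-elementary subgroup scheme $\mcE\le G_{K}$ through which the $\pi$-point factors (Remark~\ref{re:pi-basics}(2)), and use Galois descent to obtain the required proper $H\subset G$ defined over $k$.
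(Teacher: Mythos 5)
Your proposal tracks the paper's architecture faithfully through the first three stages: the ``only if'' direction, the treatment of Frobenius kernels via Theorems~\ref{thm:Dade} and \ref{th:lpt}, the descent to arbitrary connected $G$ via Theorem~\ref{th:subgroup}, and the reduction of $G\cong G^{\circ}\times(\bbZ/p)^{s}$ to the connected case by replacing $(\bbZ/p)^{s}$ with $\bbG_{a(1)}^{\times s}$. All of that is exactly what the paper does.

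The gap is in the last case, and it is a real one. Once you reduce (as the paper does) to $k$ algebraically closed, a $\pi$-point factors through some quasi-elementary $\mcE\le G_{K}$, and the image of $\mcE$ in $\pi_{0}(G_{K})=\pi_{0}(G)$ is an elementary abelian $p$-group $E$. If $E$ is a \emph{proper} subgroup of $\pi_{0}(G)$ — in particular whenever $\pi_{0}(G)$ has a non-$p$-part, or is a $p$-group that is not elementary abelian — then $\mcE$ lands in $(G^{\circ}\rtimes E)_{K}$, which is proper and defined over $k$, and Theorem~\ref{th:principle} applies. Your Sylow observation is a special instance of this, and it is correct. But when $\pi_{0}(G)$ is itself elementary abelian and acts nontrivially on $G^{\circ}$ (so $G\ne G^{\circ}\times\pi_{0}(G)$), the image $E$ can equal all of $\pi_{0}(G)$, and then the preimage $G^{\circ}\rtimes E$ is $G$ itself — there is no proper subgroup scheme in sight from this naive bookkeeping. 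Galois descent does not help: the paper has already extended to algebraically closed $k$ before this step, so rationality of the subgroup is not the obstruction; the obstruction is that the ambient subgroup you have located is not proper.

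What the paper actually uses at this juncture is the Quillen stratification theorem for the space of $\pi$-points, \cite[Theorem~4.12]{Friedlander/Pevtsova:2007a}. That result upgrades the conclusion: every $\pi$-point of $G$ is \emph{equivalent} to one factoring through $((G^{\circ})^{E}\times E)_{K}$ for some elementary abelian $E\le\pi_{0}(G)$, where $(G^{\circ})^{E}$ denotes the $E$-fixed points of $G^{\circ}$ under conjugation. This subgroup scheme \emph{is} proper in $G$ exactly when $G$ is not a direct product with elementary abelian component group — precisely the case one is in after the earlier reductions. Passing from $G^{\circ}\rtimes E$ to the much smaller $(G^{\circ})^{E}\times E$ is the content of the Quillen-type result and cannot be recovered by inspecting images in $\pi_{0}$ or by Galois descent. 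Without invoking it (or some substitute argument of comparable strength), your final induction step does not go through.
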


\begin{proof} 
Assume $M$ is projective and let  $\alpha\colon K[t]/(t^{p})\to KG$ be a $\pi$-point of $G$. The $G_{K}$-module $M_{K}$ is then projective, and hence so is the $K[t]/(t^{p})$-module $\alpha^{*}(M_{K})$, for $\alpha$ is flat when viewed as a map of algebras. Thus $\pisupp_{G}(M)=\varnothing$.

The proof of the converse builds up in a number of steps.

\subsection*{Frobenius kernels}
Suppose $G:=\Gr$, the $r$th Frobenius kernel of a connected reductive group $\mcG$ over $k$. Let $M$ be a $G$-module with $\pisupp_{G}(M)=\varnothing$. For any field extension $K/k$ and embedding $\phi\colon \mathbb G_{a(r),K} \hookrightarrow \mcG_{(r),K}$ of group schemes over $K$, one then has $\pisupp_{\mcG_{(r),K}}(M_{K})=\varnothing$, and hence it follows that
\[ 
\pisupp_{\bbG_{a(r),K}}(\phi^*(M_K))=\varnothing\,.
\] 
Theorem~\ref{thm:Dade} then implies that $\phi^*(M_K)$ is projective as a $\mathbb G_{a(r),K}$-module. It remains to apply
Theorem~\ref{th:lpt}.

\subsection*{Connected finite group schemes} 
This case is immediate from the preceding one and Theorem~\ref{th:subgroup} since any connected finite group scheme can be embedded into $\GL_{n(r)}$ for some positive integers $n, r$; see \cite[3.4]{Waterhouse:1979a}.

\subsection*{$G\cong G^\circ \times (\bbZ/p)^{r}$ where $G^\circ$ is the connected component at the identity}

Let $M$ be a $G$-module with $\pisupp_{G}(M) =\varnothing$. Let $\mcE = (\bbG_{a(1)})^{\times r}$  and observe that the $k$-algebras $kG$ and $k (G^\circ \times \mcE)$ are isomorphic, and hence so are $H^*(G, M)$ and $H^*(G^\circ \times \mcE, M)$. Moreover, $(\bbZ/p)^{r}$ and $\mcE$ are both unipotent abelian group schemes, so the maximal unipotent abelian subgroup schemes of $G$ and $G^{\circ}\times\mcE$, and hence also their $\pi$-points, are in bijection. In summary: $\pisupp_{G^\circ \times \mcE}(M) = \pisupp_G(M)=\varnothing$. Since we have verified already that $\pi$-support detects projectivity for connected finite group schemes, and $G^{\circ}\times \mcE$ is one such, one gets the equality below
\[
H^{i}(G,M)\cong H^{i}(G^\circ \times \mcE, M) = 0 \quad\text{for $i\ge 1$.}
\]
It remains to recall Lemma~\ref{le:vanish} to deduce that $M$ is projective as a $G$-module.

\subsection*{General finite group schemes} Extending scalars, if needed, we may assume that $k$ is algebraically closed.  The proof is by induction on $\dim_k k[G]$. The base case is trivial.  Suppose the theorem holds for all proper subgroup schemes of $G$. Let $G = G^\circ \rtimes \pi_{0}(G)$ where $G^\circ$ is the connected component at the identity and $\pi_{0}(G)$ is the (finite) group of connected components.  If the product is direct and $\pi_{0}(G)$ is elementary abelian, then we have already verified that the desired result holds for $G$. We may thus assume that this is not the case; this implies that for any elementary abelian subgroup $E <\pi_{0}(G)$, the subgroup scheme $(G^\circ)^E \times E$ is a proper subgroup scheme of $G$.

If follows from the Quillen stratification for the space of
equivalence classes of
$\pi$-points~\cite[4.12]{Friedlander/Pevtsova:2007a} that any
$\pi$-point for $G$ is equivalent to one of the form
$\alpha\colon K[t]/t^{p}\to KG$ that factors through
$((G^\circ)^E \times E)_K < (G^\circ \rtimes \pi_{0}(G))_K$, where
$E < \pi_{0}(G)$ is an elementary abelian subgroup. Thus, the
hypotheses of Theorem~\ref{th:principle} holds, and we can conclude
that $M$ is projective, as needed.
\end{proof}

\begin{remark}
\label{re:fpnot} 
The implication that when the $\pi$-support of a $G$-module $M$ is empty it is projective is the content of \cite[Theorem~5.3]{Friedlander/Pevtsova:2007a}.  However, the proof given in \cite{Friedlander/Pevtsova:2007a} is incorrect. The problem occurs in the third paragraph of the proof where what is asserted
translates to: the $\pi$-support of $\End_{k}(M)$ is contained in the $\pi$-support of $M$.  This is not so; see \cite[Example~6.4]{Benson/Iyengar/Krause/Pevtsova:2015a}. What is true is that the $\pi$-\textit{cosupport} of $\End_{k}(M)$ is contained in the $\pi$-support of $M$, by Theorem~\ref{th:tensor-and-hom-pi}. This is why it is useful to consider cosupports even if one is interested only in supports.
\end{remark}

Chouinard~\cite[Corollary~1.1]{Chouinard:1976a} proved that a module $M$ over a finite group $G$ is projective if its restriction to every elementary abelian subgroup of $G$ is projective. This result is fundamental to the development of the theory of support varieties for finite groups.  For finite group schemes Theorem~\ref{th:main} yields the following analogue of Chouinard's theorem. There are two critical differences: one has to allow for field extensions and there are infinitely many subgroup schemes involved.

\begin{corollary}
\label{co:Chouinard}
Let $G$ be a finite group scheme over $k$. A $G$-module $M$ is projective if for every field extension $K/k$ and quasi-elementary subgroup scheme $\mcE$ of $G_{K}$, the $\mcE$-module $(M_{K})\da_{\mcE}$ is projective. 
\end{corollary}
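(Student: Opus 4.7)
The plan is to deduce this from Theorem~\ref{th:main}: it suffices to show that the hypothesis forces $\pisupp_{G}(M) = \varnothing$, after which projectivity of $M$ follows immediately.

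First I would pick an arbitrary point $\fp \in \Proj H^{*}(G,k)$ and let $\alpha_{\fp}\colon K[t]/(t^{p}) \to KG$ be a representative $\pi$-point. Invoking Remark~\ref{re:pi-basics}(2), I may replace $\alpha_{\fp}$ by an equivalent $\pi$-point that factors as
\[
K[t]/(t^{p}) \xra{\ \beta\ } K\mcE \xra{\ \iota\ } KG
\]
for some quasi-elementary subgroup scheme $\mcE \le G_{K}$, where $\iota$ is the inclusion induced by $\mcE \hookrightarrow G_{K}$. Then $\alpha_{\fp}^{*}(M_{K}) = \beta^{*}((M_{K})\da_{\mcE})$ as $K[t]/(t^{p})$-modules.

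Next, by hypothesis applied to the field extension $K/k$ and the subgroup scheme $\mcE$ of $G_{K}$, the $\mcE$-module $(M_{K})\da_{\mcE}$ is projective, hence a direct summand of a free $K\mcE$-module. The definition of a $\pi$-point forces $K\mcE$ to be flat, and so free (being finite dimensional over the local ring $K[t]/(t^{p})$), when viewed as a $K[t]/(t^{p})$-module via $\beta$. Consequently $\beta^{*}$ sends free $K\mcE$-modules to free $K[t]/(t^{p})$-modules, and therefore sends projectives to projectives. Thus $\alpha_{\fp}^{*}(M_{K})$ is projective, which means $\fp \notin \pisupp_{G}(M)$.

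Since $\fp$ was arbitrary, $\pisupp_{G}(M) = \varnothing$, and Theorem~\ref{th:main} finishes the argument. There is really no main obstacle here: all the substance is packaged into Theorem~\ref{th:main} and into the reduction of arbitrary $\pi$-points to ones factoring through quasi-elementary subgroup schemes from Remark~\ref{re:pi-basics}(2). The only point to verify carefully is that restriction along the flat map $\beta$ preserves projectivity, which is the elementary observation above.
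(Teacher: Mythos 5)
Your proposal is correct and is essentially identical to the paper's (much terser) proof: both invoke Remark~\ref{re:pi-basics}(2) to factor any $\pi$-point through a quasi-elementary subgroup scheme, observe that the hypothesis then forces the pullback of $M_K$ to be projective, conclude $\pisupp_G(M)=\varnothing$, and finish with Theorem~\ref{th:main}. The only place where your wording is a touch loose is the claim that ``the definition of a $\pi$-point forces $K\mcE$ to be flat'' --- what the definition directly gives is flatness of $KG$ over $K[t]/(t^p)$; what you actually want (and what Remark~\ref{re:pi-basics}(2), via \cite[Proposition~4.2]{Friedlander/Pevtsova:2005a}, delivers) is that the factored map $\beta$ is itself a $\pi$-point of $\mcE$, so that $K\mcE$ is flat (hence free) over $K[t]/(t^p)$.
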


\begin{proof}
As noted in Remark~\ref{re:pi-basics}, every $\pi$-point factors through some $\mcE$ as above, so if $(M_{K})\da_{\mcE}$ is projective for each such $\mcE$, it follows that $\pisupp_{G}(M)=\varnothing$, and hence that $M$ is projective, by Theorem~\ref{th:main}.
\end{proof}

\begin{remark}
\label{re:cosupp} 
All the steps in the proof of Theorem~\ref{th:main} except for the one dealing with Frobenius kernels, Theorem~\ref{th:lpt}, work equally well for $\pi$-cosupport: namely, they can be used with little change to show that if $\picosupp M = \varnothing$ then $M$ is projective. Explicitly, the following changes need to be made:
\begin{enumerate}
\item Theorem~\ref{th:principle}: Simply replace $\pi$-support with $\pi$-cosupport.
\item Theorem~\ref{th:unip}: In the proof of Case 2, use \cite[Lemma 5.1]{Benson/Iyengar/Krause/Pevtsova:2015a} which is an analogue for cosupports of the Kronecker quiver lemma.
\item Theorem~\ref{th:subgroup}: The proof 
  carries over almost verbatim. One replaces the extension $M_K$ with
  coextension $M^K$ and uses repeatedly that coextension commutes with
  induction for finite group
  schemes~\cite[Lemma~2.2]{Benson/Iyengar/Krause/Pevtsova:2015a}.
\end{enumerate}

The trouble with establishing the analogue of Theorem~\ref{th:lpt} for cosupports can be pinpointed to the fact that the induction functor $\ind\colon \Mod H \to \Mod G$ does not commute with coextension of scalars for general affine group schemes. Even worse, when $G$ is not finite and $K/k$ is of infinite degree, given a $G$-module $M$ there is no natural action of $G_K$ on $M^K$.

In Part~\ref{part:applications} we prove that $\pi$-cosupport detects
projectivity, taking an entirely different approach. This uses the
support detection theorem in an essential way.
\end{remark}

\part{Minimal localising subcategories}
\label{part:minimality} 
Let $G$ be a finite group scheme over a field $k$. From now on we consider the stable module category $\StMod G$ whose construction and basic properties were recalled in Section~\ref{se:bik}. For each $\fp$ in $\Proj H^{*}(G,k)$, we focus on the subcategory
$\gam_{\fp}(\StMod G)$ consisting of modules with support in $\{\fp \}$.  These are precisely the modules whose cohomology is
$\fp$-local and $\fp$-torsion.

This part of the paper is dedicated to proving that $\gam_{\fp}(\StMod G)$ is \emph{minimal}, meaning that it contains no proper non-zero tensor ideal localising subcategories. As noted in Section~\ref{se:bik}, this is the crux of the classification of the tensor ideal localising
subcategories of $\StMod G $.

For closed points in $\Proj H^{*}(G,k)$ the desired minimality is verified in Section~\ref{se:cohomological-support}. The general case
is settled in Section~\ref{se:passage-to-closed-points}, by reduction to a closed point. The key idea here is to construct good generic
points for projective varieties. The necessary commutative algebra is developed in Section~\ref{se:generic-points}.

\section{Support equals $\pi$-support}
\label{se:cohomological-support} Henceforth it becomes necessary to
have at our disposal the methods developed in
\cite{Benson/Iyengar/Krause:2008a, Benson/Iyengar/Krause:2011a} and
recalled in Section~\ref{se:bik}. We begin by establishing that the
$\pi$-support of a $G$-module coincides with its support. Using this,
we track the behaviour of supports under extensions of scalars and
verify that for a closed point $\fp$ the tensor ideal localising
subcategory $\gam_{\fp}(\StMod G)$ is minimal.
 
\begin{theorem}
  \label{th:pisupp=bik} 
  Let $G$ be a finite group scheme defined over
  $k$. Viewed as subsets of $\Proj H^{*}(G,k)$ one has
  $\pisupp_{G}(M) = \supp_{G}(M)$ for any $G$-module $M$.
\end{theorem}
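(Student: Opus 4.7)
The plan is to combine Theorem~\ref{th:main} (support detection) with the BIK tensor identity $\gam_\fp M \cong \gam_\fp k \otimes_k M$, valid in any tensor-triangulated category with a central ring action, and with the tensor-product formula for $\pi$-support in Theorem~\ref{th:tensor-and-hom-pi}. The first observation is that both notions of support detect non-projectivity globally: the local-global principle from \cite{Benson/Iyengar/Krause:2008a} gives $\supp_G(N)=\varnothing$ if and only if $N\cong 0$ in $\StMod G$, that is, $N$ is projective, while Theorem~\ref{th:main} gives the same characterisation for $\pisupp_G$. Hence, for each $G$-module $N$ one has $\supp_G(N)=\varnothing \Iff \pisupp_G(N)=\varnothing$.

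To extract a prime-by-prime statement, I apply this global equivalence not to $M$ itself but to the module $\gam_\fp k \otimes_k M$. Combining with Theorem~\ref{th:tensor-and-hom-pi}, this produces, for each $\fp\in\Proj H^*(G,k)$, the chain of equivalences
\[
\fp\in\supp_G(M) \iff \gam_\fp k\otimes_k M \text{ is not projective} \iff \pisupp_G(\gam_\fp k)\cap \pisupp_G(M)\neq\varnothing.
\]
Granted the identity $\pisupp_G(\gam_\fp k)=\{\fp\}$, this immediately reads $\fp\in\supp_G(M) \iff \fp\in\pisupp_G(M)$, completing the proof.

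The entire argument therefore hinges on establishing $\pisupp_G(\gam_\fp k)=\{\fp\}$. My plan is to proceed through Koszul/Carlson test objects. For a homogeneous $z\in H^{\ges 1}(G,k)$, the Carlson object $k/\!\!/z$ (the cone of $z\colon k\to k[|z|]$ in $\StMod G$) has the property that, for any $\pi$-point $\alpha_\fq\colon K[t]/(t^p)\to KG$, pulling back the defining triangle yields an explicit triangle over $K[t]/(t^p)$ whose middle term is projective precisely when $\alpha_\fq^*(z)$ is a unit, i.e.\ when $z\notin\fq$. Hence $\pisupp_G(k/\!\!/z)=\mcV(z)\cap\Proj H^*(G,k)$, matching the known value of $\supp_G(k/\!\!/z)$. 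Iterating Theorem~\ref{th:tensor-and-hom-pi} yields the analogous equality for Koszul objects $k/\!\!/\bsz$ on arbitrary tuples $\bsz$. Realising $\gam_\fp k$ along the lines of \cite{Benson/Iyengar/Krause:2008a} as a homotopy colimit of Koszul objects built from powers of a set of generators of $\fp$, followed by a localisation at $\fp$, one extracts the desired equality $\pisupp_G(\gam_\fp k)=\{\fp\}$.

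The principal obstacle is this last step: showing that $\pi$-support passes cleanly through the homotopy (co)limit and localisation operations used to build $\gam_\fp k$ from finite Koszul pieces. Unlike BIK support, which is tailored to these operations, $\pi$-support is defined by external pullback along $\pi$-points and its compatibility with such constructions is not automatic. To sidestep this I would argue, at the level of $M$, the inclusion $\pisupp_G(M)\subseteq\supp_G(M)$ first, by testing against $k/\!\!/\bsz$ for $\bsz$ generating an ideal transverse to a chosen prime and invoking the global detection equivalence; the reverse inclusion then follows from the chain displayed above together with the already-established $\pi$-support computation for Koszul objects.
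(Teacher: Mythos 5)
Your strategy is exactly the paper's: reduce the theorem to the single identity $\pisupp_G(\gam_\fp k)=\{\fp\}$, then run the formal chain
\[
\fp\in\supp_G(M) \ \Iff\ \gam_\fp k\otimes_k M \text{ not projective} \ \Iff\ \pisupp_G(\gam_\fp k)\cap\pisupp_G(M)\neq\varnothing
\]
using Theorem~\ref{th:main} for the middle equivalence and Theorem~\ref{th:tensor-and-hom-pi} for the right one. That chain is correct, and your observation that everything reduces to computing $\pisupp_G(\gam_\fp k)$ is precisely the content of the paper's ``purely formal'' remark.

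The gap is in establishing that identity. The paper does not prove it --- it cites \cite[Proposition~6.6]{Friedlander/Pevtsova:2007a}, which your proposal does not invoke. You instead attempt to derive it from the Koszul-object computation and homotopy colimits, and you honestly flag the difficulty of pushing $\pi$-support through colimits and through the localisation $L_{\mcZ(\fp)}$. Your proposed ``sidestep'' does not repair this: unwinding the chain above shows that $\supp_G(M)\subseteq\pisupp_G(M)$ requires the upper bound $\pisupp_G(\gam_\fp k)\subseteq\{\fp\}$, while $\pisupp_G(M)\subseteq\supp_G(M)$ requires the lower bound $\fp\in\pisupp_G(\gam_\fp k)$, and neither follows from the finite Koszul-object computation $\pisupp_G(\kos k{\bsz})=\mcV(\bsz)$ alone. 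The upper bound for the torsion piece $\gam_{\mcV(\fp)}k$ is manageable (each $\pi$-point induces an exact, coproduct- and tensor-preserving functor, so vanishing on the compact generators propagates through the tensor ideal localising subcategory), but the behaviour under $L_{\mcZ(\fp)}$, and the lower bound in particular, are precisely what the Friedlander--Pevtsova argument supplies. So you have the right reduction, but the crux still needs either a citation to \cite[Proposition~6.6]{Friedlander/Pevtsova:2007a} or a complete proof of the missing identity.
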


\begin{proof} 
  From \cite[Proposition~6.6]{Friedlander/Pevtsova:2007a} one gets
  that $\pisupp_{G}(\gam_{\fp} k) =\{\fp\}$. Given this, the tensor
  product formula (Theorem~\ref{th:tensor-and-hom-pi}) and the
  detection of projectivity (Theorem~\ref{th:main}), the calculation
  is purely formal; see the proof of \cite[Theorem  6.1]{Benson/Iyengar/Krause/Pevtsova:2015a}.
\end{proof}
 
The preceding result reconciles two rather different points of view of
support and so makes available a panoply of new tools for studying
representation of finite group schemes. The next result, required in
Section~\ref{se:passage-to-closed-points}, well illustrates this
point.
 
\begin{proposition}
\label{pr:basechange-supp} 
Let $G$ be a finite group scheme over $k$, let $K/k$ be an extension of fields, and $\rho\colon \Proj
H^{*}(G_{K},K)\to \Proj H^{*}(G,k)$ the induced map.
\begin{enumerate}[\quad\rm(1)]
\item $\supp_{G_{K}}(M_{K}) = \rho^{-1}(\supp_{G}(M))$ for any
$G$-module $M$.
\item $\supp_{G}(N\da_{G})= \rho(\supp_{G_{K}}(N))$ for any
$G_{K}$-module $N$.
\end{enumerate}
\end{proposition}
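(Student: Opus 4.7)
My strategy is to treat the two parts by different means. Part~(1) reduces, via Theorem~\ref{th:pisupp=bik}, to an identification of $\pi$-supports that is essentially bookkeeping with $\pi$-points. Part~(2) follows from two applications of Proposition~\ref{pr:change-cat-ring}, exploiting its faithfulness criterion for equalities.

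For part~(1), given $\fq\in\Proj H^{*}(G_{K},K)$ with representative $\pi$-point $\alpha\colon L[t]/(t^{p})\to LG_{K}$ over a field $L\supseteq K$, the canonical identification
\[
LG_{K}=L\otimes_{K}(K\otimes_{k}kG)=L\otimes_{k}kG=LG
\]
turns $\alpha$ into a $\pi$-point of $G$ defined over $L$. The cohomology map for $\alpha$ viewed as a $\pi$-point of $G$ factors as $H^{*}(G,k)\hookrightarrow K\otimes_{k}H^{*}(G,k)=H^{*}(G_{K},K)\to \Ext^{*}_{L[t]/(t^{p})}(L,L)$, whose second factor has radical of kernel equal to $\fq$, so the full composition has radical of kernel equal to $\rho(\fq)$; hence the equivalence class of $\alpha$ in $\Proj H^{*}(G,k)$ is exactly $\rho(\fq)$. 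Moreover, associativity of tensor products yields $L\otimes_{K}M_{K}=L\otimes_{K}(K\otimes_{k}M)=L\otimes_{k}M$ as $LG$-modules, so $\alpha^{*}(L\otimes_{K}M_{K})=\alpha^{*}(L\otimes_{k}M)$ as $L[t]/(t^{p})$-modules. Therefore $\fq\in\pisupp_{G_{K}}(M_{K})$ if and only if $\rho(\fq)\in\pisupp_{G}(M)$, giving~(1).

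For part~(2), first apply Proposition~\ref{pr:change-cat-ring} to the restriction functor $F=(-)\da_{G}\colon\StMod G_{K}\to\StMod G$ with $f=\id_{H^{*}(G,k)}$, where $H^{*}(G,k)$ acts on $\StMod G_{K}$ through the canonical ring homomorphism $H^{*}(G,k)\to H^{*}(G_{K},K)$. Since $F$ preserves nonzero objects (it leaves the underlying $k$-vector space unchanged), it is faithful on objects, and the Proposition yields
\[
\supp_{G}(N\da_{G})=\supp_{H^{*}(G,k)}(N),
\]
where the right-hand side is the support of $N\in\StMod G_{K}$ computed via the $H^{*}(G,k)$-action. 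Next, apply Proposition~\ref{pr:change-cat-ring} to the identity functor $\id\colon\StMod G_{K}\to\StMod G_{K}$ with ring change $f\colon H^{*}(G,k)\to H^{*}(G_{K},K)$ and $\phi=\rho$; since $\id$ is faithful on objects, one obtains $\supp_{H^{*}(G,k)}(N)=\rho(\supp_{G_{K}}(N))$. Combining the two equalities gives~(2).

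The main subtlety lies in the setup for part~(2): one must verify that the composed $H^{*}(G,k)$-action on $\StMod G_{K}$ is compatible with $F=\res$ in the sense required by Proposition~\ref{pr:change-cat-ring}, and check faithfulness on objects in each invocation. These verifications are routine once one unwinds the definitions, after which the result is a formal consequence of the cited Proposition.
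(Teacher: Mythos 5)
Your overall strategy matches the paper's: part~(1) via the $\pi$-support description and Theorem~\ref{th:pisupp=bik}, and part~(2) by two applications of Proposition~\ref{pr:change-cat-ring}. Part~(1) as you write it is fine. The gap is in your verification of faithfulness on objects for the restriction functor $(-)\da_{G}\colon \StMod G_{K}\to\StMod G$.

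You assert that this functor is faithful on objects because ``it leaves the underlying $k$-vector space unchanged'' and hence ``preserves nonzero objects.'' But the categories in question are the \emph{stable} module categories, where the zero object is the class of projective modules, not the zero module. Faithfulness on objects here means: if $N\da_{G}$ is projective as a $G$-module, then $N$ is projective as a $G_{K}$-module. The fact that restriction does not change the underlying $k$-vector space says nothing about this --- a nonprojective $G_{K}$-module could conceivably restrict to a projective $G$-module. So your justification does not establish what is needed.

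The correct argument (which is what the paper supplies) is the following. Suppose $N\da_{G}$ is projective. For any simple $G$-module $S$ and any $i\ge 1$, adjunction gives
\[
\Ext^{i}_{G_{K}}(K\otimes_{k}S,\,N)\cong \Ext^{i}_{G}(S,\,N\da_{G})=0.
\]
Since every simple $G_{K}$-module is a direct summand of $K\otimes_{k}S$ for some simple $G$-module $S$, it follows that $\Ext^{i}_{G_{K}}(T,N)=0$ for every simple $G_{K}$-module $T$ and $i\ge 1$, whence $N$ is projective. With this verification in place, the remainder of your part~(2) goes through. (The other invocation, for the identity functor, is indeed trivially faithful on objects, as you note.)
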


\begin{proof} The equality in (1) is clear for $\pi$-supports; now
recall Theorem~\ref{th:pisupp=bik}.

We deduce the equality in (2) by applying twice
Proposition~\ref{pr:change-cat-ring}. The action of $H^{*}(G_{K},K)$
on $\StMod G_{K}$ induces an action also of $H^{*}(G,k)$ via
restriction of scalars along the homomorphism $K\otimes_{k}-\colon
H^{*}(G,k)\to H^{*}(G_{K},K)$. Applying
Proposition~\ref{pr:change-cat-ring} to the functor
$(\id,K\otimes_{k}-)$ on $\StMod G_{K}$ yields an equality
\[ \supp_{H^{*}(G,k)}(N) = \rho(\supp_{G_{K}}(N))\,.
\] Next observe that the restriction functor $(-)\da_{G}$ is
compatible with the actions of $H^{*}(G,k)$.  Also, $(-)\da_{G}$ is
exact, preserves set-indexed coproducts and products, and is faithful
on objects. Everything is obvious, except the last property. So
suppose $N$ is a $G_{K}$-module such that $N\da_{G}$ is
projective. Then, for any simple $G$-module $S$ and integer $i\ge 1$
one has
\[ \Ext^{i}_{G_{K}}(K\otimes_{k}S,N) \cong \Ext^{i}_{G}(S,N\da_{G}) =
0
\] Since any simple $G_{K}$-module is a direct summand of
$K\otimes_{k}S$, for some choice of $S$, it follows that $N$ is
projective, as desired.

Now we apply
Proposition~\ref{pr:change-cat-ring}  to the functor
\[ ((-)\da_{G},\id_{H^{*}(G,k)})\colon \StMod G_{K} \lra \StMod G
\] and obtain the equality 
\[ \supp_{G}(N\da_{G})=\supp_{H^{*}(G,k)}(N) \,.\]
In conjunction with the one
above, this gives (2).
\end{proof}

We can now begin to address the main task of this part of the paper.

\begin{proposition}
\label{pr:minimality-closed} When $\fm$ is a closed point of $\Proj
H^{*}(G,k)$, the tensor ideal localising subcategory
$\gam_{\fm}(\StMod G)$ of $\StMod G$ is minimal.
\end{proposition}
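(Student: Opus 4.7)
The plan is to apply the minimality criterion foreshadowed in the introduction and in Section~\ref{se:bik}: the subcategory $\gam_{\fm}(\StMod G)$ is minimal provided that for every pair of non-zero objects $M,N$ in it, the $G$-module $\Hom_{k}(M,N)$ fails to be projective (equivalently $\sHom_{G}^{*}(M,N)\neq 0$). So I would fix non-zero $M,N\in\gam_{\fm}(\StMod G)$ and aim to show that $\Hom_{k}(M,N)$ is not projective in $\Mod G$.

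First I would observe that $\supp_{G}(M)$ and $\supp_{G}(N)$ are subsets of $\{\fm\}$ by the very definition of $\gam_{\fm}(\StMod G)$, and they are non-empty since $M,N$ are non-projective: this is the easy direction of Theorem~\ref{th:main} transported through the identification $\pisupp_{G}=\supp_{G}$ of Theorem~\ref{th:pisupp=bik}. Hence
\[
\pisupp_{G}(M)=\pisupp_{G}(N)=\{\fm\}.
\]

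The next step is the decisive one, and it is where the hypothesis that $\fm$ is closed enters. By Lemma~\ref{le:maxsupp=maxcosupp}, for closed points $\pi$-support and $\pi$-cosupport agree, so $\fm\in\picosupp_{G}(N)$. Feeding this into the function-object formula of Theorem~\ref{th:tensor-and-hom-pi}(2) gives
\[
\picosupp_{G}(\Hom_{k}(M,N))=\pisupp_{G}(M)\cap\picosupp_{G}(N)=\{\fm\}\cap\picosupp_{G}(N)\ni\fm.
\]
In particular $\picosupp_{G}(\Hom_{k}(M,N))$ is non-empty. Since the $\pi$-cosupport of any projective $G$-module is empty (this is the trivial direction: if $X$ is projective then so is $X^{K}=\Hom_{k}(K,X)$, and hence so is $\alpha^{*}(X^{K})$ for any $\pi$-point $\alpha$), we conclude that $\Hom_{k}(M,N)$ is not projective, which is what was needed.

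The main point is not really an obstacle but rather an acknowledgment of what the closed-point hypothesis is doing: Lemma~\ref{le:maxsupp=maxcosupp} provides the coincidence of $\pi$-support and $\pi$-cosupport only at closed points, precisely because those correspond to $\pi$-points defined over a finite extension of $k$, where $K\otimes_{k}M$ and $\Hom_{k}(K,M)$ agree as $G_{K}$-modules. For a general $\fp\in\Proj H^{*}(G,k)$ one has no such identification, and this is exactly why establishing minimality at arbitrary primes in Section~\ref{se:passage-to-closed-points} requires the generic-point reduction rather than a direct tensor/Hom calculation.
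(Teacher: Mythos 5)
Your proof is correct and follows essentially the same route as the paper's: reduce to showing $\Hom_k(M,N)$ is not projective, use Lemma~\ref{le:maxsupp=maxcosupp} at the closed point to pass to $\pi$-cosupport, and apply the function-object formula of Theorem~\ref{th:tensor-and-hom-pi}. The only cosmetic difference is at the last step, where you invoke the easy direction for $\pi$-cosupport directly, whereas the paper uses Lemma~\ref{le:maxsupp=maxcosupp} a second time to land back in $\pi$-support and then cites Theorem~\ref{th:main}; both amount to the same observation.
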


\begin{proof} It suffices to verify that the $G$-module $\Hom_k(M,N)$
is not projective for any non-zero modules $M,N$ in $\gam_{\fm}(\StMod
G)$; see  \cite[Lemma~3.9]{\bik:2011b}.

A crucial observation is that since $\fm$ is a closed point, it is in $\pisupp_G(M)$ if and only if it is in $\picosupp_G(M)$ for any
$G$-module $M$; see Lemma~\ref{le:maxsupp=maxcosupp}. This will be used (twice) without comment in what follows.  For any non-zero
modules $M,N$ in $\gam_{\fm}(\StMod G)$ Theorem~\ref{th:tensor-and-hom-pi} yields
\[ 
\picosupp_{G}(\Hom_k(M,N)) = \pisupp_G(M) \cap \picosupp_G(N) = \{\fm\}\,.
\] 
Thus, $\fm$ is also in the support of $\Hom_{k}(M,N)$. It remains to recall Theorem~\ref{th:main}.
\end{proof}

\section{Generic points in graded-commutative algebras}
\label{se:generic-points} 
A standard technique in classical algebraic geometry is to ensure that
irreducible varieties have generic points by enlarging their field of
definition. For affine varieties this amounts to the following: Given
a prime ideal $\fp$ in an algebra $A$ finitely generated  over
a field $k$, there is an extension of fields $K/k$ such that in the
ring $B:=A\otimes_{k}K$ there is a maximal ideal $\fm$ lying over
$\fp$, that is to say, $\fm\cap A=\fp$.  In
Section~\ref{se:passage-to-closed-points} we need a more precise
version of this result, namely that there is such a $K$ where $\fm$ is
cut out from $B/\fp B$, the fiber over $\fp$, by a complete
intersection in $B$; also, we have to deal  with projective
varieties. This is 
what is achieved in this section; see
Theorem~\ref{th:generic-point}. The statement and its proof require
some care, for in our context the desired property holds only outside
a hypersurface.

Let $B$ be a graded-commutative ring: a graded abelian group $B=\{B^{i}\}_{i\in\bbZ}$ with an associative product satisfying
$a\cdot b = (-1)^{|a||b|}b\cdot a$ for all elements $a,b$ in $B$, where $|\ |$ denotes degree. We consider only homogenous elements of graded objects.

\begin{definition}
\label{de:weakly-regular} Let $N$ be a graded $B$-module. Mimicking
\cite[Definition 1.1.1]{Bruns/Herzog:1998a}, we say that a sequence
$\bsb:=b_{1},\dots,b_{n}$ of elements in $B$ is a \emph{weak
$N$-sequence} if $b_{i}$ is not a zerodivisor on
$N/(b_{1},\dots,b_{i-1})N$, for $i=1,\dots,n$. We drop the adjective
``weak'' if, in addition, $\bsb N\ne N$ holds.
\end{definition}

\begin{lemma}
\label{le:regular-sequence} Let $A\to B$ be a homomorphism of
graded-commutative rings and $\bsb:=b_{1},\dots,b_{n}$ a weak
$B$-sequence. If the $A$-module $B/(b_{1},\dots,b_{i})B$ is flat for
each $i=1,\dots,n$, then $\bsb$ is a weak $(M\otimes_{A}B)$-sequence
for each graded $A$-module $M$.
\end{lemma}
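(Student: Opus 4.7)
The plan is to proceed by induction on $i$, showing that at each stage, multiplication by $b_i$ is injective on $(M\otimes_A B)/(b_1,\dots,b_{i-1})(M\otimes_A B)$. The key identification is that, writing $B_i := B/(b_1,\dots,b_{i-1})B$ (so $B_1 = B$), flatness of $B_i$ over $A$ gives a natural isomorphism
\[
(M\otimes_A B)/(b_1,\dots,b_{i-1})(M\otimes_A B) \;\cong\; M\otimes_A B_i\,,
\]
since tensor product commutes with cokernels and $M\otimes_A -$ preserves the exactness needed to make this quotient well-behaved.

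The heart of the argument is then to tensor the short exact sequence
\[
0 \lra B_i \xra{\,b_i\,} B_i \lra B_{i+1} \lra 0\,,
\]
which is exact by the assumption that $\bsb$ is a weak $B$-sequence, with $M$ over $A$. Because $B_{i+1}$ is flat over $A$ by hypothesis, $\Tor_1^A(M,B_{i+1})=0$, so the long exact Tor sequence shows
\[
0 \lra M\otimes_A B_i \xra{\,b_i\,} M\otimes_A B_i \lra M\otimes_A B_{i+1} \lra 0
\]
is again exact. Under the identification above, this says exactly that $b_i$ is not a zerodivisor on $(M\otimes_A B)/(b_1,\dots,b_{i-1})(M\otimes_A B)$, completing the inductive step.

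There is no real obstacle here beyond setting up the bookkeeping carefully, since the graded-commutative setting introduces only sign conventions that do not affect the flatness or Tor arguments. The only subtlety worth flagging is that one should not expect $\bsb(M\otimes_A B)\ne M\otimes_A B$ in general — this is why the conclusion is only that $\bsb$ is a \emph{weak} sequence on $M\otimes_A B$, consistent with Definition~\ref{de:weakly-regular}.
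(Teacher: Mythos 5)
Your proof is correct and follows essentially the same path as the paper's: tensor the short exact sequence $0 \to B_i \xrightarrow{b_i} B_i \to B_{i+1}\to 0$ with $M$ over $A$, use flatness of the cokernel $B_{i+1}$ to kill $\Tor_1^A(M,B_{i+1})$, and identify $M\otimes_A B_i$ with $(M\otimes_A B)/(b_1,\dots,b_{i-1})(M\otimes_A B)$. One small imprecision worth fixing: that identification has nothing to do with flatness of $B_i$ (indeed $B_1=B$ need not be flat over $A$ under the hypotheses); it is a consequence purely of right exactness of $M\otimes_A -$ applied to the presentation $B^{i-1}\to B\to B_i\to 0$, and your parenthetical ``since tensor product commutes with cokernels'' is in fact the whole justification.
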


\begin{proof} Set $\bsb_{\les i}:=b_{1},\dots,b_{i}$ for
$i=0,\dots,n$. For $i=1,\dots,n$, since $b_{i}$ is not a zero divisor on
$B/(\bsb_{\les i-1})$, one gets the following exact sequence of of
graded $B$-modules.
\[ 0\lra \frac B{(\bsb_{\les i-1})B} \xra{\ b_{i}\ } \frac
B{(\bsb_{\les i-1})B} \lra \frac B{(\bsb_{\les i})B}\lra 0
\] Since $B/(\bsb_{\les i})$ is flat as an $A$-module, applying
$M\otimes_{A}-$ to the exact sequence above and noting that
$M\otimes_{A} B/(\bsb_{\les i-1})$ is naturally isomorphic to
$(M\otimes_{A}B)/(\bsb_{\les i-1})$, one gets the following exact sequence.
\[ 0\lra \frac {M\otimes_{A}B}{(\bsb_{\les i-1})(M\otimes_{A}B)}
\xra{\ b_{i}\ } \frac {M\otimes_{A}B}{(\bsb_{\les
i-1})(M\otimes_{A}B)} \lra \frac {M\otimes_{A}B}{(\bsb_{\les
i})(M\otimes_{A}B)}\lra 0
\] This is the desired conclusion.
\end{proof}

\subsection*{A model for localisation} To prepare for the next step,
we recall some basic properties of the kernel of a diagonal map. Let
$k$ be a field and $k[\bsx]$ a polynomial ring over $k$ in
indeterminates $\bsx:=x_{0},\dots,x_{n}$ of the same degree. Let
$\bst:=t_{1},\dots,t_{n}$ be indeterminates over $k$ and $k(\bst)$ the
corresponding field of rational functions, and consider the
homomorphism of $k$-algebras
\[ 
\mu \colon k(\bst)[\bsx]\twoheadrightarrow
k\left(\frac{x_{1}}{x_{0}},\dots,\frac{x_{n}}{x_{0}}\right)[x_{0}]
\quad \text{where $\mu(t_{i})=\frac{x_{i}}{x_{0}}$ for each $i$.}
\] 
The range of $\mu$ is viewed as a subring of the field of rational functions in $\bsx$.

\begin{lemma}
\label{le:diagonal} The ideal $\Ker(\mu)$ is generated by
$x_{1}-x_{0}t_{1},\dots,x_{n}-x_{0}t_{n}$, and the latter is a
$k(\bst)[\bsx]$-sequence
\end{lemma}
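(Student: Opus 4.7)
The plan is to start by establishing the second assertion (that $x_1 - x_0 t_1, \dots, x_n - x_0 t_n$ is a regular sequence) and then use it to derive the first. After making the change of variables $y_i := x_i - x_0 t_i$ for $i = 1, \dots, n$, there is an identification $k(\bst)[\bsx] = k(\bst)[x_0, y_1, \dots, y_n]$ as polynomial rings in $n+1$ indeterminates over the field $k(\bst)$. Under this identification, the sequence $y_1, \dots, y_n$ is simply a sequence of distinct indeterminates in a polynomial ring over a field, hence automatically a regular sequence; this settles the second claim.

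For the first assertion, let $I := (x_1 - x_0 t_1, \dots, x_n - x_0 t_n)$. The inclusion $I \subseteq \Ker(\mu)$ is immediate from $\mu(t_i) = x_i/x_0$. To obtain equality, I would pass to the induced surjection $\bar\mu\colon k(\bst)[\bsx]/I \to k(x_1/x_0, \dots, x_n/x_0)[x_0]$ and show it is injective. Using the change of variables above, the quotient $k(\bst)[\bsx]/I$ is $k(\bst)[x_0]$, and $\bar\mu$ acts by $x_0 \mapsto x_0$ and $t_i \mapsto x_i/x_0$.

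The key point is that $x_1/x_0, \dots, x_n/x_0$ are algebraically independent over $k$ inside the field of rational functions $k(\bsx)$. This follows from a transcendence degree count: $k(\bsx)$ has transcendence degree $n+1$ over $k$ and equals $k(x_0)(x_1/x_0, \dots, x_n/x_0)$ with $x_0$ transcendental over $k$, so the ratios form a transcendence basis over $k(x_0)$ and are in particular algebraically independent over $k$. Hence the restriction of $\bar\mu$ to $k(\bst)$ is an injection of fields, and extending this to a map of polynomial rings in the single indeterminate $x_0$ preserves injectivity. I do not anticipate any serious obstacle; the argument amounts to a routine change of variables combined with the standard algebraic independence of homogeneous coordinates on projective space.
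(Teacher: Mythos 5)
Your proof is correct, and it takes a different and arguably more self-contained route than the paper's. The paper simply declares the statement about $\Ker(\mu)$ to be "clear" and for the regular sequence either sketches an induction on $n$ or invokes the Cohen--Macaulay property of the polynomial ring (via Bruns--Herzog, Theorem 2.1.2(c)) together with the Krull dimension count $\dim k(\bst)[\bsx]/\Ker(\mu) = 1$; note this second route implicitly uses the kernel computation first. Your change of variables $y_i := x_i - x_0 t_i$ makes the regular sequence claim immediate (indeterminates in a polynomial ring over a field), with no appeal to Cohen--Macaulay machinery, and it also identifies $k(\bst)[\bsx]/I$ with $k(\bst)[x_0]$ so that the kernel assertion reduces to the injectivity of the induced map $k(\bst)[x_0] \to k(x_1/x_0,\dots,x_n/x_0)[x_0]$, which you settle by a transcendence degree count. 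The two halves of your argument are logically independent (both resting only on the change of variables), whereas the paper's Krull-dimension argument for regularity depends on already knowing the kernel. The transcendence degree count should also make explicit, for the reader, that $x_0$ is transcendental over $k(x_1/x_0,\dots,x_n/x_0)$ (so the target really is a polynomial ring in $x_0$); this is implicit in the count you give but worth stating since the target is defined as a subring of $k(\bsx)$ rather than abstractly. In short: same conclusion, more elementary and careful proof, and it fills in a detail the paper elides.
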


\begin{proof} It is clear that the kernel of $\mu$ is generated by the
given elements. That these elements form a $k(\bst)[\bsx]$-sequence
can be readily verified by, for example, an induction on $n$. Another
way is to note that they are $n$ elements in a polynomial ring and the
Krull dimension of $k(\bst)[\bsx]/\Ker(\mu)$, is one; see
\cite[Theorem~2.1.2(c)]{Bruns/Herzog:1998a}.
\end{proof}

Let now $A$ be a graded-commutative $k$-algebra, and
$\bsa:=a_{0},\dots,a_{n}$ an \emph{algebraically independent} set over
$k$, with each $a_{i}$ of the \emph{same degree}. Observe that the
following subset of $A$ is multiplicatively closed.
\begin{equation}
\label{eq:localising-set} U_{\bsa}:= \{f(a_{0},\dots,a_{n})\mid
\text{$f$ a non-zero homogeneous polynomial}\}
\end{equation} 
The algebraic independence of $\bsa$ is equivalent to the condition that $0$ is not in $U_{\bsa}$.  For example, $U_{a}$ is the multiplicatively closed subset $\cup_{i\ges 0}ka^{i}$. For any $A$-module $M$ one has the localisation at $U_{\bsa}$, namely equivalence classes of fractions
\[ 
U_{\bsa}^{-1}M:= \big\{ \big[\frac mf\big] \mid m\in M \text{ and }
f\in U_{\bsa} \big\}
\] 
The result below provides a concrete realisation of this localisation.

\begin{lemma}
\label{le:localisation} Let $\bst:=t_{1},\dots,t_{n}$ be
indeterminates over $k$ and $k(\bst)$ the corresponding field of
rational functions. Set $B:=A\otimes_{k}k(\bst)$ and
$b_{i}:=a_{i}-a_{0}t_{i}$, for $i=1,\dots,n$. The following statements
hold.
\begin{enumerate}[\quad\rm(1)]
\item The canonical map $A\to B/(\bsb)$ of $k$-algebras induces an
isomorphism
\[ U_{\bsa}^{-1}A\xra{\ \cong\ } U_{a_{0}}^{-1}(B/(\bsb))\,.
\]
\item 
$\bsb$ is a weak $U_{a_{0}}^{-1}(M\otimes_{k}k(\bst))$-sequence for any graded $A$-module $M$.
\end{enumerate}
\end{lemma}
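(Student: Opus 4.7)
The plan is to prove both statements via one linear change of variables in $k(\bst)[\bsa][a_0^{-1}]$, in the spirit of Lemma~\ref{le:diagonal}. Writing $R := k[\bsa][a_0^{-1}]$ and noting that $a_j/a_0 \in R$, set $s_j := t_j - a_j/a_0$ for $j = 1, \ldots, n$. This is an $R$-linear change of coordinates, so $R[t_1, \ldots, t_n] = R[s_1, \ldots, s_n]$ as polynomial rings; inverting the image of $k[\bst]\setminus\{0\}$ (call it $\Sigma$) then identifies
\[
U_{a_0}^{-1}\bigl(k[\bsa]\otimes_k k(\bst)\bigr) \;=\; R[s_1,\ldots,s_n][\Sigma^{-1}].
\]
Since $b_j = a_j - a_0 t_j = -a_0 s_j$ and $a_0$ is a unit, $(\bsb) = (s_1,\ldots,s_n)$ as ideals in this ring.

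For (1), first treat the case $A = k[\bsa]$. Quotienting by $(\bsb)$ kills the $s_j$'s, leaving $R$ localised at $\{f(a_1/a_0, \ldots, a_n/a_0) \mid f \in k[\bst] \setminus \{0\}\}$. A non-zero homogeneous polynomial $g$ of degree $d$ in $\bsa$ satisfies $g(\bsa) = a_0^{d}\, g(1, a_1/a_0, \ldots, a_n/a_0)$ with $g(1,\cdot)\in k[\bst]$ non-zero, and conversely every non-zero $f \in k[\bst]$ arises this way; hence the described localisation coincides with $U_{\bsa}^{-1}k[\bsa]$. For general $A$, tensor over $k[\bsa]$: using $B = A \otimes_{k[\bsa]} (k[\bsa]\otimes_k k(\bst))$ together with the fact that tensor commutes with quotients and localisations, one obtains $U_{a_0}^{-1}(B/(\bsb)) \cong A \otimes_{k[\bsa]} U_{\bsa}^{-1}k[\bsa] = U_{\bsa}^{-1}A$, and one checks this isomorphism is induced by the canonical map $A \to B/(\bsb)$.

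For (2), the same change of variables, now tensored with $M$ over $k[\bsa]$, gives
\[
U_{a_0}^{-1}\bigl(M \otimes_k k(\bst)\bigr) \;=\; M[a_0^{-1}][s_1,\ldots,s_n][\Sigma^{-1}],
\]
and once more $(\bsb) = (s_1, \ldots, s_n)$ because $a_0$ is a unit. Multiplication by $s_{i+1}$ on the polynomial module $M[a_0^{-1}][s_{i+1},\ldots,s_n]$ is manifestly injective; since localisation at $\Sigma$ is flat and commutes with quotienting by $(s_1,\ldots,s_i)$, this injectivity persists on each successive quotient. Hence $s_1,\ldots,s_n$, and therefore $\bsb$, is a weak sequence on $U_{a_0}^{-1}(M\otimes_k k(\bst))$.

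The only technical subtlety is the identification in (1): one must check that inverting the set $\{f(a_1/a_0,\ldots,a_n/a_0)\}$ and inverting the multiplicative set $U_{\bsa}$ of all non-zero homogeneous polynomials in $\bsa$ produce the same localisation of $k[\bsa][a_0^{-1}]$. This rests on the common degree of the $a_i$ and their algebraic independence over $k$. Once this bookkeeping is settled, both parts follow mechanically from the single substitution $s_j = t_j - a_j/a_0$.
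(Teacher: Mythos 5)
Your proof is correct, and it follows the same broad strategy as the paper --- reduce to the polynomial case $A=k[\bsa]$, then pass to general $A$ by tensoring along $k[\bsa]\to A$ --- but the technical heart is organised differently, and I think more transparently. The paper works with the surjection $\mu\colon k(\bst)[\bsx]\twoheadrightarrow k(x_1/x_0,\dots,x_n/x_0)[x_0]$, uses Lemma~\ref{le:diagonal} to identify $\Ker(\mu)$, and then proves (2) by checking that each quotient $k(\bst)[\bsx,x_0^{-1}]/(\bsb_{\les i})$ is flat over $k[\bsx,x_0^{-1}]$ so that Lemma~\ref{le:regular-sequence} can be invoked. You instead invert $a_0$ once and for all and make the $R$-linear substitution $s_j=t_j-a_j/a_0$ in $R[\bst]$, which turns $(\bsb)$ into $(\bss)$ (since $b_j=-a_0 s_j$ and $a_0$ is a unit) and exhibits $U_{a_0}^{-1}(M\otimes_k k(\bst))$ as a localisation of the honest polynomial module $M[a_0^{-1}][\bss]$. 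Part (1) then reduces to the homogenisation/dehomogenisation bijection between $U_{\bsa}$ and $\{f(a_1/a_0,\dots,a_n/a_0):f\in k[\bst]\setminus\{0\}\}$, and part (2) to the elementary fact that multiplication by a new polynomial variable is injective, with localisation at $\Sigma$ (flat, hence exact, and commuting with the quotients) carrying the injectivity along. This bypasses both the auxiliary flatness verification and the appeal to Lemma~\ref{le:regular-sequence}. One small point worth stating explicitly, as you flag at the end: the dehomogenisation map sending a nonzero homogeneous $g$ to $g(1,\bst)$ lands in $k[\bst]\setminus\{0\}$ and every nonzero $f$ arises this way (via homogenisation), which is exactly what makes the two multiplicative sets generate the same localisation of $R$; this uses that the $a_i$ share a common degree and are algebraically independent, as you note.
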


\begin{proof} We first verify the statements when $A=k[\bsx]$, a
polynomial ring over $k$ in indeterminates $\bsx:=x_{0},\dots,x_{n}$
of the same degree, and $a_{i}=x_{i}$ for each $i$. Then
$B=k(\bst)[\bsx]$, the polynomial ring over the same indeterminates,
but over the field $k(\bst)$, and $U_{x_{0}}^{-1} k(\bst)[\bsx]$ can
be naturally identified with $k(\bst)[\bsx,x_{0}^{-1}]$.

Consider the commutative diagram of morphisms of graded $k$-algebras
\[
\begin{gathered} \xymatrixcolsep{1pc} \xymatrix{ k[\bsx] \ar@{->}[d]
\ar@{->}[r] & k(\bst)[\bsx] \ar@{->}[r] & k(\bst)[\bsx,x_{0}^{-1}]
\ar@{->>}[d]^{U_{x_{0}}^{-1}\mu} \\ U_{\bsx}^{-1}k[\bsx]
\ar@{->}[rr]^-{\cong} && k(x_{1},\dots,x_{n})[x^{\pm 1}_{0}] }
\end{gathered} \quad \text{where $\mu(t_{i})=\frac{x_{i}}{x_{0}}$.}
\] The unlabeled arrows are all canonical inclusions and the
isomorphism is obvious. It follows from Lemma~\ref{le:diagonal} that
$\Ker(U_{x_{0}}^{-1}\mu)$ is the ideal generated by
$\{x_{i}-x_{0}t_{i}\}_{i=1}^{n}$. This justifies the assertion in (1).

As to (2), since $x_{1}-x_{0}t_{1},\dots,x_{n}-x_{0}t_{n}$ is a
$k(\bst)[\bsx]$-sequence, by Lemma~\ref{le:diagonal}, it is also a
weak $k(\bst)[\bsx,x_{0}^{-1}]$-sequence. Moreover, arguing as above
one gets that there is an isomorphism of graded rings
\[
\frac{k(\bst)[\bsx,x_{0}^{-1}]}{(x_{1}-x_{0}t_{1},\dots,x_{i}-x_{0}t_{i})}
\cong k(x_{1},\dots, x_{i},t_{i+1},\dots,t_{n})[\bsx,x^{-1}_{0}]
\] for each $1\le i\le n$. In particular, these are all flat as
modules over $k[\bsx,x^{-1}_{0}]$, for they are obtained by
localisation followed by an extension of scalars. Thus
Lemma~\ref{le:regular-sequence} applied to the morphism
$k[\bsx,x^{-1}_{0}]\to k(\bst)[\bsx,x^{-1}_{0}]$, yields (2).

This completes the proof of the result when $A=k[\bsx]$.

\medskip

The desired statements for a general $A$ follow readily by base
change. Indeed, consider the morphism of graded $k$-algebras
$k[\bsx]\to A$ given by the assignment $x_{i}\mapsto a_{i}$, for each
$i$. It is easy to see then that $B\cong
k(\bst)[\bsx]\otimes_{k[\bsx]}A$, so that applying
$-\otimes_{k[\bsx]}A$ to the isomorphism
\[ U_{\bsx}^{-1}k[\bsx] \xra{\ \cong\ } k(x_{1},\dots,x_{n})[x^{\pm
1}_{0}]
\] gives the isomorphism in (1). As to (2), viewing a graded
$A$-module $M$ as an module over $k[\bsx]$ via restriction of scalars,
and applying the already established result for $k[\bsx]$ gives the
desired conclusion.
\end{proof}

Let $k$ be a field and $A=\{A^{i}\}_{\ges0}$ a finitely generated
graded-commutative $k$-algebra with $A^{0}=k$. As usual $\Proj A$
denotes the collection of homogeneous prime ideals in $A$ not
containing $A^{\ges 1}$. Given a point $\fp$ in $\Proj A$, we write
$k(\fp)$ for the localisation of $A/\fp$ at the set of non-zero
homogenous elements of $A/\fp$. Note that $k(\fp)$ is a graded field
and its component in degree zero is the field of functions at $\fp$.

\begin{definition}
  \label{de:noether-normalisation} Let $A$ be a domain and set
  $Q:=k((0))$, the graded field of fractions of $A$. We say that
  elements $\bsa:=a_{0},\dots,a_{n}$ in $A$ give a \emph{Noether
    normalisation} of $A$ if the $a_{i}$ all have the same positive
  degree, are algebraically independent over $k$, and $A$ is a
  finitely generated module over the subalgebra $k[\bsa]$. Noether
  normalisations exist; see, for example,
  \cite[Theorem~1.5.17]{Bruns/Herzog:1998a}, noting that, in the
  language of \emph{op.~cit.}, a sequence $a_{0},\dots,a_{n}$ is a
  system of parameters for $A$ if and only so is the sequence
  $a_{0}^{e_{0}},\dots,a_{n}^{e_{n}}$, for any positive integers
  $e_{0},\dots,e_{n}$.

Observe that if $\bsa$ is a Noether normalisation of $A$, then the set
$\{a_{1}/a_{0},\dots, a_{n}/a_{0}\}$ is a transcendence basis for the
extension of fields $k\subseteq Q_{0}$.
\end{definition}

The result below, though not needed in the sequel, serves to explain
why in constructing generic points it suffices to enlarge the field of
definition to function fields of Noether normalisations. 

\begin{lemma}
\label{le:field-of-fractions} The inclusion $A\to Q$ induces an
isomorphism $U_{\bsa}^{-1}A\xra{\cong} Q$.
\end{lemma}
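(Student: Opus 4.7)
The plan is to show that every non-zero homogeneous element of $A$ becomes invertible after localising at $U_{\bsa}$; once this is done, the inclusion $U_{\bsa}^{-1}A \subseteq Q$ is forced to be an equality, since $Q$ is by definition obtained from $A$ by inverting \emph{all} non-zero homogeneous elements.

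First I would record the inclusion $U_{\bsa}^{-1}A \hookrightarrow Q$: every element of $U_{\bsa}$ is by construction a non-zero homogeneous element of $A$, and because $A$ is a domain both rings embed canonically into the ordinary field of fractions of $A$, so there is nothing to verify on injectivity.

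The heart of the argument is then to prove that for any non-zero homogeneous $a \in A$ one has $1/a \in U_{\bsa}^{-1}A$. By the definition of Noether normalisation, $A$ is a finitely generated module over $B := k[\bsa]$, so the extension $B \subseteq A$ is integral and $a$ satisfies a monic relation $a^n + c_{n-1}a^{n-1} + \cdots + c_0 = 0$ with $c_i \in B$. Passing to the homogeneous summand of degree $n|a|$ and using that $B$ is a graded subalgebra, I may take each $c_i$ homogeneous of degree $(n-i)|a|$; pieces where $(n-i)|a|$ fails to be divisible by $|a_0|$ vanish automatically, but the equation remains non-trivial because $a^n \ne 0$ in the domain $A$. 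If $c_0 = 0$ the domain property lets me cancel a factor of $a$ and iterate, so I may assume $c_0 \ne 0$, hence $c_0 \in U_{\bsa}$. Rearranging the relation gives
\[
\frac{1}{a} \;=\; -\frac{a^{n-1} + c_{n-1}a^{n-2} + \cdots + c_1}{c_0} \;\in\; U_{\bsa}^{-1}A,
\]
which is the desired conclusion.

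The only genuine subtlety is the homogeneity step — one needs to know that restricting the integral equation to a single graded component still leaves a non-trivial relation — but this is automatic from monicity of the integral equation, so it is a technical point rather than a serious obstacle.
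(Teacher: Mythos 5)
Your proposal is correct and amounts to the same argument as the paper's: both hinge on $A$ being module-finite over $k[\bsa]$, with $U_{\bsa}^{-1}k[\bsa]$ a graded field. The paper simply invokes the abstract fact that a graded domain which is module-finite over a graded field is itself a graded field, whereas you unwind that lemma explicitly by extracting a homogeneous monic integral equation and rearranging to exhibit $1/a$; your handling of the homogeneity step (pass to the top-degree component, use monicity to keep the relation nontrivial) is sound.
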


\begin{proof} 
By the universal property of localisations, it suffices to verify that $U_{\bsa}^{-1}A$ is a graded field. Recall that
$U_{\bsa}$ is the set of homogenous elements in $k[\bsa]\setminus \{0\}$. By definition, $A$ is
finitely generated as a module over $k[\bsa]$, so $U_{\bsa}^{-1}A$ is
finitely generated as a module over $U_{\bsa}^{-1}k[\bsa]$. The latter
is a graded field, hence so is the former, as it is a domain.
\end{proof}

Fix a point $\fp$ in $\Proj A$ and elements $\bsa:=a_{0},\dots,a_{n}$ in $A$ whose residue classes modulo $\fp$ give a Noether normalisation of $A/\fp$; see Definition~\ref{de:noether-normalisation}. Let $K:=k(\bst)$, the field of rational functions in indeterminates
$\bst:=t_{1},\dots,t_{n}$ over $k$. Set
\[
 B:= A\otimes_{k}K \quad\text{and}\quad b_{i}:= a_{i} - a_{0}t_{i} \quad\text{for $i=1,\dots,n$}.
\] 
Thus $B$ is a $K$-algebra. The next result is probably well-known but we were unable to find an adequate reference. Recall that a point $\fm$ in $\Proj B$ is \emph{closed} if it is maximal with respect to inclusion; equivalently, the Krull dimension of $B/\fm$ is one.

\begin{theorem}
\label{th:generic-point} 
Set $\fm:=\sqrt{(\fp,b_{1},\dots,b_{n})B}$. The following statements hold.
\begin{enumerate}[\quad\rm(1)]
\item $\fm$ is prime ideal in $B$ and defines a closed point in $\Proj B$.
\item $\fm\cap A = \fp$.
\item $b_{1},\dots,b_{n}$ is a weak $U^{-1}_{a_{0}}B$-sequence.
\end{enumerate}
\end{theorem}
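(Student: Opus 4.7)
My strategy is to construct an explicit graded $k$-algebra homomorphism $\psi\colon B=A\otimes_{k}K\to k((\fp))$, where $k((\fp))$ denotes the graded fraction field of $A/\fp$, and then to identify $\fm=\Ker\psi$. Because $\bar a_{0},\dots,\bar a_{n}$ form a Noether normalisation of $A/\fp$, they are algebraically independent over $k$, and a short calculation (clearing denominators) shows the quotients $\bar a_{1}/\bar a_{0},\dots,\bar a_{n}/\bar a_{0}$ are a transcendence basis for the degree-zero subfield $k(\fp)\subseteq k((\fp))$ over $k$. The assignments $a\otimes 1\mapsto\bar a$ and $1\otimes t_{i}\mapsto\bar a_{i}/\bar a_{0}$ therefore extend to a graded homomorphism $\psi$. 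It annihilates both $\fp$ and every $b_{i}=a_{i}-a_{0}t_{i}$, so factors through a map $\bar\psi\colon D\to k((\fp))$ where $D:=B/(\fp,\bsb)B$. Applying Lemma~\ref{le:localisation}(1) to $A/\fp$ with its Noether normalisation, combined with Lemma~\ref{le:field-of-fractions}, identifies $\bar\psi$ with the localisation map $D\to U_{\bar a_{0}}^{-1}D\cong k((\fp))$.

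Since $k((\fp))$ is a graded field, $\Ker\psi$ is automatically prime; as it contains $\fm$, the essential task reduces to proving $D$ has a \emph{unique} minimal prime, whence the nilradical of $D$ equals $\Ker\bar\psi$ and hence $\fm=\Ker\psi$. The claim is that no minimal prime of $D$ contains $\bar a_{0}$. Indeed, suppose $\bar\fq$ were such; lift it to a prime $\fq$ of $B':=(A/\fp)\otimes_{k}K$ that is minimal over $(\bsb)B'$. By Krull's height theorem, $\height(\fq)\le n$. But $\fq\ni a_{0}$ by assumption, and consequently $a_{i}=a_{0}t_{i}+b_{i}\in\fq$ for each $i\ge 1$, so $\fq\supseteq(\bar a_{0},\dots,\bar a_{n})B'$. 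Since $\bar a_{0},\dots,\bar a_{n}$ generate an ideal whose radical is the irrelevant maximal ideal of the $(n+1)$-dimensional graded ring $A/\fp$, that ideal has height $n+1$ in $A/\fp$; flatness of $A/\fp\to B'$ then forces $\fq$ to contract to a prime of $A/\fp$ of height at least $n+1$, giving $\height(\fq)\ge n+1$, a contradiction. Hence every minimal prime of $D$ survives in $D[\bar a_{0}^{-1}]\cong k((\fp))$, which, being a graded field, has $(0)$ as its unique minimal prime; so $D$ has a unique minimal prime, namely $\Ker\bar\psi$.

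The remaining assertions follow quickly. Statement (2) is immediate because $\psi|_{A}$ is the composition $A\twoheadrightarrow A/\fp\hookrightarrow k((\fp))$, with kernel $\fp$. For the closed-point part of (1), observe that $B/\fm\hookrightarrow k((\fp))$ and $U_{\bar a_{0}}^{-1}(B/\fm)=k((\fp))$, so $k((\fp))$ is already the graded fraction field of $B/\fm$; its degree-zero part $k(\fp)$ is finite over $K=B^{0}$, so $\fm$ is maximal among homogeneous primes in $\Proj B$, i.e.\ a closed point. Finally, (3) is an immediate consequence of Lemma~\ref{le:localisation}(2) applied with $M=A$. The main obstacle is the height-theoretic argument in the second paragraph; it is there that the full strength of the Noether normalisation hypothesis on $\bar a_{0},\dots,\bar a_{n}$ enters, via control of the height of $(\bar a_{0},\dots,\bar a_{n})B'$ under base change.
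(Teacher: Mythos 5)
Your proposal is correct and, at its core, follows the same route as the paper's proof: reduce modulo $\fp$, invoke Lemma~\ref{le:localisation}(1) to identify $U_{a_0}^{-1}\bigl(B/(\fp,\bsb)B\bigr)$ with the graded fraction field $k((\fp))$ of $A/\fp$, and then use Krull's height theorem to show that $a_0$ lies in no minimal prime over $\bsb$, so that the radical has a unique minimal prime. Statement (3) is, in both treatments, an immediate application of Lemma~\ref{le:localisation}(2).

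The only departures are cosmetic but worth recording. You wrap the uniqueness-of-minimal-prime argument inside an explicit graded homomorphism $\psi\colon B\to k((\fp))$ and identify $\fm$ as its kernel; this makes primeness of $\fm$ and assertion (2) read off instantly ($\psi|_A$ is the inclusion $A/\fp\hookrightarrow k((\fp))$), whereas the paper derives these from the commutative square \eqref{eq:generic} after establishing uniqueness of the minimal prime. For the crucial height contradiction, the paper computes $\height(\bsa B)=\dim B=n+1$ using finiteness of $B$ over $K[\bsa]$; you instead note that $\fq\cap(A/\fp)$ must be the irrelevant maximal ideal of $A/\fp$ (height $n+1$) and invoke going-down along the flat map $A/\fp\to(A/\fp)\otimes_k K$. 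Both routes are standard and equivalent in strength. Your identification of $\fm$ with $\Ker\psi$ also gives the closed-point claim directly, by observing that the degree-zero part of $k((\fp))$ is finite over $K$.
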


\begin{proof} 
Note that the set $\bsa$ is algebraically independent over $k$, since it has  that property modulo $\fp$. Thus (3) is a special case of
Lemma~\ref{le:localisation}(2).

As to (1) and (2), replacing $A$ by $A/\fp$, we can assume $A$ is a domain with Noether normalisation $\bsa:=a_{0},\dots,a_{n}$ and
$\fp=(0)$. Set $\bsb:=b_{1},\dots,b_{n}$, so that $\fm = \sqrt{\bsb B}$. In what follows, it will be helpful to keep in mind the following commutative diagram of homomorphisms of graded rings:
\begin{equation}
\label{eq:generic} 
\xymatrixcolsep{3pc} 
\xymatrix{ 
A \ar@{->}[d] \ar@{->}[r]^{\alpha} & B/\bsb B \ar@{->}[d]^{\beta} \ar@{->>}[r]& B/\fm \ar@{->}[d] \\
U_{\bsa}^{-1}A\ar@{->}[r]^-{\cong} & U_{a_{0}}^{-1}(B/\bsb B)\ar@{->>}[r]^{\gamma} & U_{a_{0}}^{-1}(B/\fm) 
}
\end{equation} 
The map $\alpha$ is the composition $A\to B\to B/\bsb B$ while $\beta$ is localisation at $U_{a_{0}}$. The isomorphism is by Lemma~\ref{le:localisation}. Since $A$ is a domain, the vertical map on the left is one-to-one, and hence so is the map $\alpha$. This proves that $\bsb B\cap A=(0)$, but we need more. In what follows, $\Spec B$ is the collection of homogeneous prime ideals of $B$.

\medskip

(1) Since $\fm=\sqrt{\bsb B}$, the desired result follows from statements below:
\begin{enumerate}[\quad\rm(i)]
\item $\height(\fq)=n$ for any $\fq\in \Spec B$ minimal over $\bsb B$.
\item $a_{0}\not\in \fq$ for any $\fq\in \Spec B$ minimal over $\bsb B$.
\item $\bsb B$ has exactly one prime ideal minimal over it.
\end{enumerate}

\medskip

(i) Since $\bsb B$ is generated by $n$ elements $\height(\fq)\le n$ for each $\fq$ minimal over $\bsb B$,  by the Krull Height Theorem~\cite[Theorem~A.1.]{Bruns/Herzog:1998a}.  On the other hand, by construction, $B$ is finitely generated as a module over its subalgebra $K[\bsa]$. Notice that $\bsb$ is contained in $K[\bsa]$, so it follows that $B/\bsb  B$ is a finitely generated module over $K[\bsa]/(\bsb)$.  Since $\bsa$ is algebraically independent, $K[\bsa]/(\bsb)$ is isomorphic to $k(a_{1}/a_{0},\dots,a_{n}/a_{0})[a_{0}]$, see Lemma~\ref{le:diagonal}, and hence of Krull dimension one. It follows that $\dim(B/\bsb B)\le 1$, and therefore that $\height(\fq)\ge n$, because $B$ is a catenary ring. This completes the proof of (i)

\medskip

(ii) Suppose $a_{0}$ is in some $\fq\in\Spec B$ minimal over $\bsb B$. Then $\fq$ contains the ideal $(a_{0},\dots,a_{n})$, because
$\bsb \subseteq\fq$. Recall that $B$ is finitely generated as a module over its subalgebra $K[\bsa]$. Thus, $B/\bsa B$ is finitely generated
as a module over $ K[\bsa]/(\bsa)\cong K$ and hence the Krull dimension of $B/\bsa B$ is zero. Said otherwise, the radical of $\bsa$ equals
$B^{\ges 1}$, the (unique) homogeneous maximal ideal of $B$. This justifies the first equality below.
\[ 
\height(\fq)\geq \height(\bsa)=\dim B = n+1
\] 
The inequality holds because $\fq\supseteq\fa$. As to the second equality: $B$ is a domain that is a finitely generated module over
$K[\bsa]$, which is of Krull dimension $n+1$. The resulting inequality $\height(\fq)\ge n+1$ contradicts the conclusion of (i). This settles (ii).

\medskip

(iii) The elements in $\Spec {(U_{a_{0}}^{-1}B)}$ minimal over $\bsb U_{a_{0}}^{-1}B$ are in bijection with the elements of $\Spec B$ minimal over $\bsb B$ and not containing $a_{0}$. Since $U_{a_{0}}^{-1}(B/\bsb B)$ is a domain, by \eqref{eq:generic}, and $a_{0}$ is not in any minimal prime of $\bsb B$, it follows that $\bsb B$ has only one prime ideal minimal over it, as asserted in (iii).

At this point, we have completed the proof of (1).

\medskip

(2) We have to verify that $\fm\cap A=(0)$. Since $U_{a_{0}}^{-1}(B/\bsb B)$ is a domain, by \eqref{eq:generic}, the ideal $\bsb U_{a_{0}}^{-1}B$ is prime.  By (1), the ideal $\fm$ is the unique prime minimal over $\bsb B$, so it follows that
\[
\bsb U_{a_{0}}^{-1} B = \fm U_{a_{0}}^{-1} B.
\]
Therefore, the map $\gamma$ in \eqref{eq:generic} is also an isomorphism. Consequently, the composed map $A\to B/\bsb B\to B/\fm$ is one-to-one, which is the desired result.
\end{proof}

In an earlier version of this work, we had claimed that the ideal $(\fp,b_{0},\dots,b_{n})$ in Theorem~\ref{th:generic-point} is itself prime. This need not be the case; the flaw in our argument was pointed out to us by Amnon Neeman.

\section{Passage to closed points}
\label{se:passage-to-closed-points} 
As usual let $G$ be a finite group scheme over a field $k$ of positive characteristic.  In this section we prove that for any point $\fp$ in
$\Proj H^{*}(G,k)$ the category $\gam_{\fp}(\StMod G)$ consisting of
the $\fp$-local and $\fp$-torsion $G$-modules is minimal.  The main
step in this proof is a concrete model for localisation at
multiplicatively closed subsets of the form $U_{\bsa}$;
see~\eqref{eq:localising-set}. With an eye towards future
applications, we establish a more general statement than needed for
the present purpose.

We begin by recalling the construction of Koszul objects from \cite[Definition~5.10]{Benson/Iyengar/Krause:2008a}.

\subsection*{Koszul objects} 
Each element $a$ in $H^{d}(G,k)$ defines
a morphism $k\to \Omega^{-d}k$ in $\StMod G$; we write $\kos ka$ for
its mapping cone. This is nothing but a shift of the Carlson module,
$L_{a}$, that came up in Lemma~\ref{le:nilp}.  We have opted to stick
to $\kos ka$ for this is what is used in
\cite{Benson/Iyengar/Krause:2008a, Benson/Iyengar/Krause:2011a} which
are the main references for this section.

It follows from the construction that, in $\StMod G$, there is an exact triangle
\begin{equation*}
\Omega^{d}k\xra{\ a\ } k \xra{\ q_{a}\ }
\Omega^{d}(\kos ka)\lra
\end{equation*} 
Given a sequence of elements $\bsa:=a_{1},\dots,a_{n}$ in $H^{*}(G,k)$, consider the $G$-module
\[ 
\kos k{\bsa}:= (\kos k{a_{1}})\otimes_{k}\cdots\otimes_{k} (\kos k{a_{n}})\,.
\] 
It comes equipped with a morphism in $\StMod G$
\begin{equation}
\label{eq:koszul-map} 
q_{\bsa}:=q_{a_{1}} \otimes_{k}\cdots\otimes_{k} q_{a_{n}}\colon k \lra \Omega^{d}(\kos k{\bsa})
\end{equation} 
where $d= |a_{1}| + \cdots + |a_{n}|$.  For any $G$-module $M$, set
\[ 
\kos M{\bsa}:= M\otimes_{k}(\kos k{\bsa})\,.
\] 
In the sequel, we need the following computation:
\begin{equation}
\label{eq:kos-support} 
\supp_{G}(\kos M{\bsa}) = \supp_{G}(M) \cap \mcV(\bsa)\,.
\end{equation} 
This is a special case of \cite[Lemma~2.6]{Benson/Iyengar/Krause:2011a}.

\begin{remark}
\label{re:localisation-functor} 
We say that an element $a$ in $H^{d}(G,k)$ is \emph{invertible} on a $G$-module $M$ if the canonical map $M\xra{a}\Omega^{-d}M$ in $\StMod G$ is an isomorphism. This is equivalent to the condition that $\kos Ma=0$. A subset $U$ of $H^{*}(G,k)$ is said to be invertible on $M$ if each element in it has that property.

Fix a multiplicatively closed subset $U$ of $H^{*}(G,k)$ and set
\[ 
\mcZ(U):=\{\fp \in \Spec H^{*}(G,k)\mid \fp \cap U \not=\varnothing\}.
\] 
This subset is specialisation closed. The associated localisation functor $L_{\mcZ(U)}$, whose construction was recalled in Section~\ref{se:bik}, is characterised by the property that for any $G$-modules $M$ and $N$, with $M$ finite dimensional, the
induced morphism
\[ 
\sHom^{*}_{G}(M,N)\lra \sHom^{*}_{G}(M,L_{\mcZ(U)}N)
\] 
of graded $H^*(G,k)$-modules is localisation at $U$; see, for example, \cite[Theorem~3.3.7]{Hovey/Palmieri/Strickland:1997a}.  In
particular, the set $U$ is invertible on $L_{\mcZ(U)}N$. For this reason, in what follows we  use the more suggestive notation
$U^{-1}N$ instead of $L_{\mcZ(U)}N$.
\end{remark}

\begin{notation} 
Let $\bsa:=a_{0},\dots,a_{n}$ be elements in
$H^{*}(G,k)$, of the same positive degree, that are algebraically
independent over $k$.  Let $K$ be the field of rational functions in
indeterminates $\bst:=t_{1},\dots,t_{n}$. Since there is a canonical
isomorphism
\[ 
H^{*}(G_{K},K)\cong H^{*}(G,k) \otimes_{k} K
\] 
as $K$-algebras, we view $H^{*}(G,k)$ as a subring of $H^{*}(G_{K},K)$, and consider elements
\[ 
b_{i}:= a_{i} - a_{0}t_{i}\quad\text{for $i=1,\dots,n$}
\] 
in $H^{*}(G_{K},K)$. Set $d=n|a_{0}|$. Composing the canonical map $k\to K\da_{G}$ with restriction to $G$ of $K\to \Omega^{d} (\kos
K{\bsb})$ in $\StMod G_{K}$ from \eqref{eq:koszul-map}, one gets a morphism
\begin{equation}
\label{eq:fmap} f\colon k \lra \Omega^{d} (\kos K{\bsb})\da_{G}
\end{equation} in $\StMod G$.  Let $U_{\bsa}$ be the multiplicatively
closed set defined in equation~\eqref{eq:localising-set}.
\end{notation}

\begin{theorem}
\label{th:localisation-model} 
With $f$ the map defined in \eqref{eq:fmap}, the following statements hold.
\begin{enumerate}[{\quad\rm(1)}]
\item
The morphism $U_{\bsa}^{-1}f$  is an isomorphism. 
\item
The set $U_{\bsa}$ is invertible on $U^{-1}_{a_{0}} \Omega^{d}(\kos K{\bsb})\da_{G}$.
\item
For any $kG$-module $M$, the natural map $\Omega^{|a_{0}|} M\xra{a_{0}}  M$ becomes an isomorphism when localised at $U_{a_{0}}$.
\end{enumerate}
Consequently, in $\StMod G$  there are isomorphisms
\[ 
\xymatrixcolsep{1.2pc}
\xymatrix{ 
U_{\bsa}^{-1}k \ar@{->}[rr]_-{U_{\bsa}^{-1}f}^-{\cong} 
	&&  U^{-1}_{\bsa} \Omega^{d} (\kos K{\bsb})\da_{G}
		& \ar@{->}[l]_-{\cong} U^{-1}_{a_{0}}\Omega^{d} (\kos K{\bsb})\da_{G} \ar@{->}[r]^-{\cong}
		    & U^{-1}_{a_{0}} (\kos K{\bsb})\da_{G}}.
\] 

\end{theorem}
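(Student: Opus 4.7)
The plan is to deduce all three assertions by computing the functor $\sHom^{*}_G(C,-)$ on compact objects $C$ and porting the commutative-algebraic results of Section~\ref{se:generic-points} through this translation. Statement (3) is essentially tautological: by Remark~\ref{re:localisation-functor} the element $a_0$ acts invertibly on $U_{a_0}^{-1}M$, which is precisely the content of (3).

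For (1), the strategy is to show that for every compact $G$-module $C$ the induced map $\sHom^{*}_G(C,f)$ is the canonical localisation
\[
\sHom^{*}_G(C,k)\lra U_{\bsa}^{-1}\sHom^{*}_G(C,k).
\]
By the universal property of $L_{\mcZ(U_{\bsa})}$ recorded in Remark~\ref{re:localisation-functor}, this will suffice. First, the restriction/extension-of-scalars adjunction identifies $\sHom^{*}_G(C,\Omega^{d}(\kos K{\bsb})\da_G)$ with $\sHom^{*}_{G_K}(C_K,\Omega^{d}(\kos K{\bsb}))$. Next, iterating the defining triangles of the Koszul objects $\kos K{b_i}$ and using that $\kos K{\bsb}$ is their tensor product, one expresses the right-hand side as the cohomology of the Koszul complex on the sequence $\bsb$ with coefficients in $\sHom^{*}_{G_K}(C_K,K)$. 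Flatness of $K$ over $k$ identifies this coefficient module with $\sHom^{*}_G(C,k)\otimes_k K$, on which the $b_i\in H^{*}(G_K,K)$ act in the obvious way; and the shift $\Omega^{d}$ is precisely tuned so that the top Koszul cohomology sits in cohomological degree zero.

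At this point the commutative algebra of Section~\ref{se:generic-points} takes over. Setting $M:=\sHom^{*}_G(C,k)$, Lemma~\ref{le:localisation}(2) asserts that $\bsb$ is a weak regular sequence on $U_{a_0}^{-1}(M\otimes_k K)$, so after inverting $U_{a_0}$ the Koszul complex has cohomology concentrated in a single degree, equal to $U_{a_0}^{-1}(M\otimes_k K)/(\bsb)$. Lemma~\ref{le:localisation}(1) then identifies this module with $U_{\bsa}^{-1}M$. Tracing the construction of $f$ through these identifications shows that the induced map on $\sHom^{*}_G(C,-)$ is the localisation map $M\to U_{\bsa}^{-1}M$, which proves (1). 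The same computation gives (2): for every compact $C$ the module $\sHom^{*}_G(C,U_{a_0}^{-1}\Omega^{d}(\kos K{\bsb})\da_G)$ is already $U_{\bsa}$-local, hence $U_{\bsa}$ acts invertibly on the object itself. The final chain of isomorphisms is then assembled from (1); from (2) together with the inclusion $U_{a_0}\subseteq U_{\bsa}$, which makes a further $U_{\bsa}^{-1}$ redundant on a $U_{\bsa}$-invertible object; and from $n$ applications of (3), recalling that $d=n|a_0|$.

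The main obstacle is the Koszul computation in the second paragraph: the iterated cofibre sequences have to be organised carefully so that the differentials become exactly multiplication by the $b_i$, and the $\Omega$-shifts tracked so that the surviving top Koszul cohomology lands in the correct degree. Once this bookkeeping is in place, everything else is a formal consequence of Lemma~\ref{le:localisation}.
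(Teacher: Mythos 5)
Your argument is essentially sound and reaches the same target, but it takes a slightly different route from the paper and contains a couple of imprecisions worth flagging.

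The paper does not compute $\sHom^{*}_{G}(C,f)$ for all compact $C$. Instead it observes that $W\da_G := \Omega^{d}(\kos K{\bsb})\da_G$ lies in $\Loc_G(k)$ (because $K\da_G$ is a sum of copies of $k$), and then invokes Lemma~\ref{le:stable-iso} to reduce the whole problem to the single compact object $C=k$. It then uses Lemma~\ref{le:unstable} to trade $\sHom^{*}_{G}(k,-)$ for $H^{*}(G,-)$, so that Lemma~\ref{le:Koszul-regular} applies verbatim. Your version tests on all compacts; this is legitimate in a compactly generated category and lets you skip the $\Loc_G(k)$ step and Lemma~\ref{le:stable-iso}, but at the cost of silently needing (a) a version of Lemma~\ref{le:Koszul-regular} with $\sHom^{*}_{G_K}(C_K,-)$ in place of $H^{*}(G_K,-)$, and (b) a module version of Lemma~\ref{le:localisation}(1), since you apply it with $M=\sHom^{*}_{G}(C,k)$ rather than with $A=H^{*}(G,k)$ itself. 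Both extensions are routine (the Koszul computation goes through because $\sHom^{*}_{G_K}(C_K,-)$ turns the defining cofibre triangles of $\kos K{b_i}$ into short exact sequences once $b_i$ acts regularly, and the module version of Lemma~\ref{le:localisation}(1) follows by applying $M\otimes_A-$ and using that localisation and base change are tensor operations), but they are not literally in the paper, so you would have to supply them.

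One local inaccuracy: you open the argument for (1) by saying you will show that $\sHom^{*}_{G}(C,f)$ \emph{is} the canonical localisation $\sHom^{*}_{G}(C,k)\to U_{\bsa}^{-1}\sHom^{*}_{G}(C,k)$. That is too strong: before inverting $a_0$, $\sHom^{*}_{G}(C,W\da_G)$ need not be $U_{\bsa}$-local (the Koszul differentials only become injective after localisation). What you actually establish in the next paragraph is that $U_{a_0}^{-1}\sHom^{*}_{G}(C,f)$ identifies with the natural map $U_{a_0}^{-1}\sHom^{*}_{G}(C,k)\to U_{\bsa}^{-1}\sHom^{*}_{G}(C,k)$, whence $U_{\bsa}^{-1}\sHom^{*}_{G}(C,f)$ is an isomorphism; since $\sHom^{*}_G(C,U_{\bsa}^{-1}(-)) = U_{\bsa}^{-1}\sHom^{*}_G(C,-)$ for compact $C$, this gives (1) and (2). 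So the computation is correct, but the strategy sentence should be rephrased to refer to the $U_{a_0}$-localised map. Part (3) and the final assembly of the chain of isomorphisms are fine.
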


The proof takes a little preparation. Given a $G$-module $M$, we write $\Loc_G(M)$ for the smallest localising subcategory of $\StMod G$ that contains $M$, and $\Loc_G^\otimes(M)$ for the smallest tensor ideal localising subcategory of $\StMod G$ containing $M$.

\begin{lemma}
\label{le:stable-iso} Let $g\colon M\to N$ be a morphism in $\StMod
G$. If $M,N$ are in $\Loc_{G}(k)$ and $\sHom^{*}_{G}(k,g)$ is an
isomorphism, then so is $g$.
\end{lemma}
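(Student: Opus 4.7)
The statement is a routine ``generator plus detection'' argument, and the plan is to reduce isomorphism of $g$ to vanishing of its cone.

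First I would complete $g$ to an exact triangle
\[
M\xra{\ g\ } N\lra C\lra \Omega^{-1}M
\]
in $\StMod G$, so that $g$ is an isomorphism if and only if $C=0$. Applying the cohomological functor $\sHom_{G}^{*}(k,-)$ to this triangle and using that $\sHom_{G}^{*}(k,g)$ is an isomorphism (by hypothesis), the associated long exact sequence collapses to give $\sHom_{G}^{*}(k,C)=0$.

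Next I would consider the full subcategory
\[
\mcS:=\{X\in \StMod G\mid \sHom_{G}^{*}(X,C)=0\}.
\]
Since $\sHom_{G}^{*}(-,C)$ is a cohomological functor that sends coproducts in the first variable to products, $\mcS$ is a localising subcategory of $\StMod G$. The previous paragraph shows $k\in\mcS$, hence $\Loc_{G}(k)\subseteq \mcS$.

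To conclude I need $C$ itself to lie in $\Loc_{G}(k)$: since $M$ and $N$ are in $\Loc_{G}(k)$ by hypothesis and $\Loc_{G}(k)$ is triangulated, the triangle above places $C$ in $\Loc_{G}(k)$ as well. Therefore $C\in\mcS$, which means $\sHom_{G}^{*}(C,C)=0$; in particular $\id_{C}=0$, whence $C=0$ and $g$ is an isomorphism. There is no real obstacle here; the only point to watch is the verification that $\mcS$ is closed under set-indexed coproducts, which uses that $\sHom_{G}(-,C)$ turns coproducts into products.
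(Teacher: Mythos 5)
Your proof is correct and is essentially the paper's own argument, just written out in full: the paper forms the cone $C$, notes $\sHom^{*}_{G}(k,C)=0$ and $C\in\Loc_{G}(k)$, and then invokes without comment the standard fact (which you supply the details for) that an object of $\Loc_{G}(k)$ that is $\sHom^{*}_{G}(k,-)$-acyclic must be zero.
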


\begin{proof} Let $C$ be the cone of $g$ in $\StMod G$; the hypotheses
is that $\sHom^{*}_{G}(k,C)=0$. Since $C$ is also in $\Loc_{G}(k)$, it
follows that it is zero in $\StMod G$, and hence that $g$ is an
isomorphism.
\end{proof}

The result below is well-known, and is recalled here for convenience.

\begin{lemma}
\label{le:unstable} For any element $a$ in $H^{*}(G,k)$ of positive
degree and $G$-module $M$, the natural map $\Ext^{*}_{G}(k,M)\to
\sHom^{*}_{G}(k,M)$ induces an isomorphism
\[ U^{-1}_{a} \Ext^{*}_{G}(k,M)\xra{\ \cong\ } U^{-1}_{a}\sHom^{*}_{G}(k,M)
\]
\end{lemma}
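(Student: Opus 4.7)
The plan is to compare source and target of the natural map
\[
\phi \colon \Ext^{*}_G(k,M) \lra \sHom^{*}_G(k,M)
\]
degree by degree, show that its kernel $K$ and cokernel $C$ are concentrated in a bounded range of degrees, and then invoke positivity of $|a|$ to conclude that $U_a^{-1}K = 0 = U_a^{-1}C$. Since $\phi$ is a morphism of graded $H^*(G,k)$-modules (compatibility of Yoneda composition with the natural maps $\Hom_G \to \sHom_G$), both $K$ and $C$ are graded $H^*(G,k)$-modules, and applying the exact functor $U_a^{-1}$ to the four term exact sequence
\[
0 \lra K \lra \Ext^{*}_G(k,M) \xra{\ \phi\ } \sHom^{*}_G(k,M) \lra C \lra 0
\]
reduces the claim to the vanishing statements above.

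First I would verify the degreewise description of $\phi$. In degree $i\ges 1$, the map $\phi_i$ is the classical identification of $\Ext^{i}_G(k,M)$ with $\sHom_G(k,\Omega^{-i}M) = \sHom^{i}_G(k,M)$ valid over a Frobenius algebra (both equal $\sHom_G(\Omega^i k,M)$); in particular $\phi_i$ is an isomorphism in positive degrees. In degree $0$, $\phi_0$ is the canonical surjection $\Hom_G(k,M) \twoheadrightarrow \Hom_G(k,M)/\mathrm{PHom}_G(k,M)$, with kernel $\mathrm{PHom}_G(k,M)$. In degrees $i<0$, the source $\Ext^{i}_G(k,M)$ is zero. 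Consequently $K$ is concentrated in degree $0$ and $C$ is concentrated in strictly negative degrees, so both are bounded above as graded abelian groups.

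Finally, since $a$ has strictly positive degree $d$, multiplication by $a^n$ raises degree by $nd$; for any homogeneous element $x$ of a bounded-above graded module, $a^n\cdot x$ has degree exceeding the upper bound once $n$ is large, hence $a^n\cdot x = 0$. Thus $a$ acts locally nilpotently on $K$ and on $C$, which gives $U_a^{-1}K = 0 = U_a^{-1}C$, completing the proof. There is no serious obstacle here: the only mild point is ensuring that the comparison map is $H^{*}(G,k)$-linear so that localisation makes sense, and this is immediate from the definitions.
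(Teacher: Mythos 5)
Your proof is correct and follows essentially the same route as the paper: both produce a four-term exact sequence of graded $H^{*}(G,k)$-modules with kernel $\mathrm{PHom}_G(k,M)$ concentrated in degree $0$ and cokernel $C$ concentrated in negative degrees, then observe that a positive-degree element acts locally nilpotently on a bounded-above graded module. The only difference is presentational: the paper simply cites \cite[Section~2]{Benson/Krause:2002a} for that exact sequence and its degree constraints, whereas you justify it directly by the degreewise description of the comparison map over a Frobenius algebra ($\phi_i$ an isomorphism for $i\ges 1$, the quotient by $\mathrm{PHom}$ in degree $0$, zero source in negative degrees).
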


\begin{proof} 
The main point is that there is an exact sequence
\[ 
0\lra {\rm PHom}_{G}(k,M) \lra \Ext^{*}_{G}(k,M) \lra \sHom_{G}^{*}(k,M) \lra C \lra 0
\] 
of graded $H^{*}(G,k)$-modules, where $C$ is concentrated in negative degrees; see, for example, \cite[Section~2]{Benson/Krause:2002a}.  For degree reasons, it is clear that ${\rm PHom}_{G}(k,M)$ and $C$ are torsion with respect to $H^{\ges 1}(G,k)$, and so are annihilated when $a$ is inverted.
\end{proof}

The next result concerns weak sequences; see
Definition~\ref{de:weakly-regular}.

\begin{lemma}
\label{le:Koszul-regular} When $\bsb:=b_{1},\dots,b_{n}$ is a weak
$U_{a}^{-1} H^{*}(G,M)$-sequence for some element $a$ in $H^{*}(G,k)$,
the natural map $M\to \Omega^{d}\kos M{\bsb}$, where $d=\sum_{i=1}^{n}
|b_{i}|$, induces an isomorphism of graded $H^{*}(G,k)$-modules
\[ U_{a}^{-1} \frac {H^{*}(G,M)}{\bsb\, H^{*}(G,M)} \xra{\ \cong\ }
U_{a}^{-1} H^{*}(G, \Omega^{d}\kos M{\bsb}).
\]
\end{lemma}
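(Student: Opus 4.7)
The plan is to argue by induction on $n$, using the defining triangle for Koszul objects together with the induced long exact sequence in cohomology, and then exploit that inverting $U_a$ is exact.

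For the base case $n=1$, write $b=b_1$ and $e=|b_1|$. The defining triangle $\Omega^e k \xra{b} k \to \Omega^e \kos k b \to$ tensored with $M$ is a triangle in $\StMod G$, and applying $H^*(G,-)$ produces a long exact sequence
\[
\cdots \to H^{i-e}(G,M) \xra{b} H^i(G,M) \to H^i(G,\Omega^e \kos M b) \to H^{i-e+1}(G,M) \xra{b} \cdots
\]
of graded $H^*(G,k)$-modules, where the maps labelled $b$ are multiplication by the corresponding cohomology class. Localisation at $U_a$ is exact, so the sequence remains exact after inverting $U_a$. By hypothesis $b$ is a non-zero divisor on $U_a^{-1}H^*(G,M)$, so every such map becomes injective, and the long exact sequence breaks into short exact sequences
\[
0 \to U_a^{-1}H^{i-e}(G,M) \xra{b} U_a^{-1}H^i(G,M) \to U_a^{-1}H^i(G,\Omega^e \kos M b) \to 0,
\]
yielding the desired isomorphism $U_a^{-1}H^*(G,M)/b \xra{\cong} U_a^{-1}H^*(G,\Omega^e \kos M b)$, induced by the natural map $M \to \Omega^e \kos M b$ coming from the triangle.

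For the inductive step, set $M' := \kos M{b_1}$, so by associativity of $\otimes_k$ there is an isomorphism $\kos M{\bsb} \cong \kos{M'}{b_2,\dots,b_n}$, with $d-|b_1|=\sum_{i\geq 2}|b_i|$. The base case identifies $U_a^{-1}H^*(G,\Omega^{|b_1|}M')$ with $U_a^{-1}H^*(G,M)/b_1$, and because $b_1,\dots,b_n$ is a weak sequence on $U_a^{-1}H^*(G,M)$ the tail $b_2,\dots,b_n$ is a weak sequence on this quotient, i.e.\ a weak $U_a^{-1}H^*(G,\Omega^{|b_1|}M')$-sequence. Applying the inductive hypothesis to $\Omega^{|b_1|}M'$ and the sequence $b_2,\dots,b_n$, and combining, gives
\[
U_a^{-1} H^*(G, \Omega^d \kos M \bsb) \;\cong\; U_a^{-1} H^*(G, \Omega^{|b_1|} M')/(b_2,\dots,b_n) \;\cong\; U_a^{-1} H^*(G,M)/\bsb,
\]
as required.

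The main obstacle is bookkeeping rather than mathematical: one must check that the composite of the two isomorphisms in the inductive step is actually induced by the single natural map $M\to \Omega^d\kos M\bsb$ (which in turn is built by iterated tensoring of the maps $k\to \Omega^{|b_i|}\kos k{b_i}$), and that this identification respects the $H^*(G,k)$-action, not merely the underlying graded abelian group structure. Both reduce to naturality of the connecting homomorphism and the fact that multiplication by $b_i$ on $H^*(G,M)$ agrees, up to sign, with the map in the long exact sequence attached to the defining triangle of $\kos k{b_i}$; this compatibility is built into the definition of $\kos k{b_i}$ as the cone of $b_i\colon k\to\Omega^{-|b_i|}k$.
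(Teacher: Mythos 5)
Your proof is correct and takes essentially the same approach as the paper: both reduce to the case $n=1$ via iteration/induction, and in the base case both use the long exact sequence in $H^*(G,-)$ attached to the defining triangle of $\kos M b$, localise at $U_a$, and invoke the non-zerodivisor hypothesis to kill the annihilator term. Your version spells out the inductive bookkeeping and the long-exact-sequence splitting slightly more explicitly than the paper, which simply records the resulting short exact sequence $0 \to H^*(G,M)/bH^*(G,M) \to H^*(G,\Omega^d\kos Mb) \to \Sigma^{d+1}(0:b) \to 0$ and notes that the last term vanishes after localisation.
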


\begin{proof} 
  It suffices to verify the claim for $n=1$; the general case follows
  by iteration. The exact triangle
  $\Omega^{d}M \xra{b} M \to \Omega^{d}\kos M{b}\to$ induces an exact
  sequence
\[ 
0\lra \frac{H^{*}(G,M)}{b H^{*}(G,M)} \lra H^{*}(G,\Omega^{d}\kos Mb) \lra \Sigma^{d+1}(0: b)\lra 0
\] 
of graded $H^{*}(G,k)$-modules. Here $(0: b)$ denotes the elements of $H^{*}(G,M)$ annihilated by $b$. Localising the sequence above at
$a$ gives the desired isomorphism, since $b$ is not a zerodivisor on $U_{a}^{-1}H^{*}(G,M)$.
\end{proof}

\begin{proof}[Proof of Theorem~\ref{th:localisation-model}] 
By construction, in $\StMod G$ there is an exact triangle 
\[
\Omega^{|a_{0}|}M\lra M\lra \Omega^{|a_0|}(\kos M{a_{0}})\lra\,.
\]
Since $\supp_{G}(\kos M{a_{0}})\subseteq \mcV(a_{0})$, by equation \eqref{eq:kos-support}, one has $U^{-1}_{a_{0}}(\kos M{a_{0}})=0$. Thus, (3) is immediate from the exact triangle above.

As to (1) and (2), set $W:=\Omega^{d}(\kos K{\bsb})$. Since $K\da_{G}$ is a direct sum of copies of $k$, it follows that $W\da_{G}$ is in $\Loc_{G}(k)$. Thus, in view of Lemma~\ref{le:stable-iso}, it suffices to prove that the morphism $f\colon k\to W\da_{G}$ induces an isomorphism
\[ 
U_{\bsa}^{-1} \sHom^{*}_{G}(k,k)\lra U_{a_{0}}^{-1} \sHom^{*}_{G}(k,W\da_{G})
\]
of graded $H^*(G,k)$-modules. Note that this map is isomorphic to
\[ 
U_{\bsa}^{-1} H^{*}(G,k)\lra U_{a_{0}}^{-1} H^{*}(G,W\da_{G})
\]
by Lemma~\ref{le:unstable}, since the degree of elements in $\bsa$ is positive.

As $H^{*}(G_{K},K)\cong H^{*}(G,k)\otimes_{k}K$ it follows from Lemma~\ref{le:localisation}(2) that $\bsb$ is a weak $U_{a_{0}}^{-1} H^{*}(G_{K},K)$-sequence. Thus Lemma~\ref{le:Koszul-regular} gives the first isomorphism below
\[ 
U_{a_{0}}^{-1} \frac{H^{*}(G_{K},K)}{ \bsb H^{*}(G_{K},K)} 
	\xra{\ \cong\ } U_{a_{0}}^{-1} H^{*}(G_{K}, W) \xra{\ \cong\ } U_{a_{0}}^{-1} H^{*}(G, W\da_{G}).
\] 
The second isomorphism is a standard adjunction. It remains to compose this with the isomorphism in Lemma~\ref{le:localisation}(1).
\end{proof}

\begin{notation}
\label{no:generic-point} 
Fix a point $\fp\in\Proj H^{*}(G,k)$, and let $a_{0},\dots,a_{n}$ be elements in $H^{*}(G,k)$ that give a Noether normalisation of $H^{*}(G,k)/\fp$; see Definition~\ref{de:noether-normalisation}.

Let $K$ be the field of rational functions in indeterminates $t_{1},\dots,t_{n}$. Consider the ideal in $H^{*}(G_{K},K)$ given by
\[ 
\fq := (\fp, b_{1},\dots,b_{n})\quad \text{where $b_{i}=a_{i}-a_{0}t_{i}$}\,.
\] 
Then $\fm:=\sqrt{\fq}$ is a closed point in $\Proj H^{*}(G_{K},K)$ lying over $\fp$, by Theorem~\ref{th:generic-point}.

Choose a finite set $\bsp\subseteq \fp$ that generates the ideal $\fp$, let $\bsq:=\bsp\cup\bsb$, and set
\[
\kos k{\fp}:= \kos k{\bsp} \quad \text{ and } \quad \kos K{\fq}:= \kos K{\bsq}\,.
\]
The $G$-module $\kos k{\fp}$ depends on the choice of $\bsp$; however, the thick subcategory of $\stmod kG$ generated by it is independent of the choice; this can be proved along the lines of \cite[Corollary~5.11]{Hovey/Palmieri/Strickland:1997a}. The next result holds for any choice of $\bsp$, and the corresponding $\bsq$.
\end{notation}

\begin{theorem}
\label{th:residue-model} 
The $G$-module $({\kos K{\fq}})\da_{G}$ is $\fp$-local and $\fp$-torsion and $f\otimes_{k} \kos k{\fp}$, with $f$ as in \eqref{eq:fmap}, induces an isomorphism $(\kos k{\fp})_{\fp}\cong ({\kos K{\fq}})\da_{G}$. Thus  in $\StMod G$ there is a commutative diagram
\[ 
\xymatrixcolsep{1.5pc} 
\xymatrix{ &\kos k{\fp}
\ar@{->}[dr]^-{f\otimes_{k}\kos k{\fp}} \ar@{->}[dl] \\
(\kos k{\fp})_{\fp} \ar@{->}[rr]_-{\cong} && (\kos K{\fq})\da_{G}
}
\] 
where the map pointing left is localisation. In particular, there is an equality
\[ 
\gam_{\fp}(\StMod G) = \Loc^{\otimes}_{G}(({\kos K{\fq}})\da_{G})\,.
\]
\end{theorem}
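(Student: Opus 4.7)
The plan is to first verify the support claim, then construct the isomorphism by tensoring the localisation model of Theorem~\ref{th:localisation-model} with $\kos k{\fp}$, and finally deduce the description of $\gam_{\fp}(\StMod G)$. For the first assertion I would compute $\supp_{G}((\kos K{\fq})\da_{G})$ directly: formula \eqref{eq:kos-support} applied inside $\StMod G_{K}$ gives $\supp_{G_{K}}(\kos K{\fq})\subseteq\mcV(\fq)$, and Theorem~\ref{th:generic-point}(1) identifies this subset of $\Proj H^{*}(G_{K},K)$ with the single closed point $\{\fm\}$. Proposition~\ref{pr:basechange-supp}(2) combined with the relation $\rho(\fm)=\fp$ from Theorem~\ref{th:generic-point}(2) then yields $\supp_{G}((\kos K{\fq})\da_{G})\subseteq\{\fp\}$, which is precisely the $\fp$-local and $\fp$-torsion condition.

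For the isomorphism, I would first identify the target of $f\otimes_{k}\kos k{\fp}$ with a shift of $(\kos K{\fq})\da_{G}$. Since $\bsq=\bsp\cup\bsb$, multiplicativity of the Koszul construction gives $\kos K{\fq}\cong \kos K{\bsp}\otimes_{K}\kos K{\bsb}$, and because the elements of $\bsp$ lie in the subring $H^{*}(G,k)\subseteq H^{*}(G_{K},K)$ there is a natural $G_{K}$-isomorphism $\kos K{\bsp}\cong K\otimes_{k}\kos k{\fp}$. The projection formula $(K\otimes_{k}V)\otimes_{K}W\cong V\otimes_{k}W$ then produces a $G$-isomorphism $(\kos K{\fq})\da_{G}\cong \kos k{\fp}\otimes_{k}(\kos K{\bsb})\da_{G}$. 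Tensoring the isomorphism $U_{\bsa}^{-1}f$ of Theorem~\ref{th:localisation-model}(1) with $\kos k{\fp}$, and using that the tensor-exact Bousfield localisation $L_{\mcZ(U_{\bsa})}$ commutes with $-\otimes_{k}\kos k{\fp}$, thus yields an isomorphism
\[
U_{\bsa}^{-1}(\kos k{\fp})\xra{\ \cong\ } U_{\bsa}^{-1}\Omega^{d}(\kos K{\fq})\da_{G}
\]
in $\StMod G$.

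To finish, the plan is to further localise at $\fp$. The elements $a_{0},\dots,a_{n}$ are algebraically independent modulo $\fp$ by the Noether normalisation assumption, so $U_{\bsa}\subseteq H^{*}(G,k)\setminus\fp$ and hence $(U_{\bsa}^{-1}X)_{\fp}\cong X_{\fp}$ by transitivity of localisation. Applied to $\kos k{\fp}$ this gives $(\kos k{\fp})_{\fp}$; applied to $(\kos K{\fq})\da_{G}$, which is already $\fp$-local by the first paragraph, it acts as the identity. This extracts the desired isomorphism (up to the shift $\Omega^{d}$), and the commutative diagram is then formal, since $f\otimes_{k}\kos k{\fp}$ lands in an $\fp$-local object and so factors uniquely through the localisation map $\kos k{\fp}\to(\kos k{\fp})_{\fp}$. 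For the last equality $\gam_{\fp}(\StMod G)=\Loc^{\otimes}_{G}((\kos K{\fq})\da_{G})$, I would invoke the standard fact from \cite{Benson/Iyengar/Krause:2008a,Benson/Iyengar/Krause:2011a} that $\gam_{\fp}(\StMod G)=\Loc^{\otimes}_{G}((\kos k{\fp})_{\fp})$: indeed $\Loc^{\otimes}_{G}(\kos k{\fp})$ is the tensor-ideal localising subcategory of $\mcV(\fp)$-torsion objects, generated by the compact object $\kos k{\fp}$ of support $\mcV(\fp)$, and applying the tensor-exact localisation $L_{\mcZ(\fp)}$ cuts this down to $\gam_{\fp}(\StMod G)$ with generator $(\kos k{\fp})_{\fp}$. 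The main subtlety will be keeping the several localisations and the shift $\Omega^{d}$ straight, and checking that restriction $G_{K}\to G$ is compatible with the tensor-and-localisation manipulations used along the way.
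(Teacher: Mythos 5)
Your proposal follows essentially the same route as the paper's proof: compute the support of $(\kos K{\fq})\da_{G}$ via~\eqref{eq:kos-support}, Theorem~\ref{th:generic-point} and Proposition~\ref{pr:basechange-supp}; decompose $\kos K{\fq}$ as $\kos K{\bsp}\otimes_{K}\kos K{\bsb}$ and use the projection formula to identify $(\kos K{\fq})\da_{G}$ with $\kos W{\fp}$ for $W=(\kos K{\bsb})\da_{G}$ (the paper phrases this as compatibility of restriction with Koszul objects); tensor the isomorphism of Theorem~\ref{th:localisation-model} with $\kos k{\fp}$ and then localise at $\fp$ using $U_{\bsa}\subseteq H^{*}(G,k)\setminus\fp$; and finally cite the standard identity $\gam_{\fp}(\StMod G)=\Loc^{\otimes}_{G}((\kos k{\fp})_{\fp})$. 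The one thing left dangling is the residual shift $\Omega^{d}$: you cite only part~(1) of Theorem~\ref{th:localisation-model} and then remark the conclusion holds ``up to the shift'', but the shift is actually eliminated by part~(3) together with the composite isomorphism displayed at the end of that theorem (i.e.\ $U_{a_{0}}^{-1}\Omega^{d}(\kos K{\bsb})\da_{G}\cong U_{a_{0}}^{-1}(\kos K{\bsb})\da_{G}$), which is the isomorphism the paper invokes and which you should invoke as well to land on the precise statement.
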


\begin{proof} 
Since $\supp_{G_{K}}(\kos K{\fq})$ equals $\{\fm\}$, by equation \eqref{eq:kos-support}, it follows from the construction of $\fm$ and Proposition~\ref{pr:basechange-supp} that $\supp_{G}(\kos K{\fq})\da_{G}$ equals $\{\fp\}$. Said otherwise, $(\kos K{\fq})\da_{G}$ is $\fp$-local and $\fp$-torsion, as claimed.

Set $W:=(\kos K{\bsb})\da_{G}$. Observe that the restriction functor $(-)\da G$ is compatible with construction of Koszul objects with respect to elements of $H^{*}(G,k)$. This gives a natural isomorphism
\[ 
\kos W{\fp} \cong  (\kos K{\fq})\da_{G}\,.
\] 
Since we already know that the module on the right is $\fp$-local, so is the one on the left. This justifies the last isomorphism below.
\[ 
(\kos k{\fp})_{\fp} \cong \kos{(k_{\fp})}{\fp} \cong \kos {(W_{\fp})}{\fp} \cong (\kos W{\fp})_{\fp}\cong \kos W{\fp}
\] 
The second is the one induced by the isomorphism in Theorem~\ref{th:localisation-model}, since $U_{\bsa}$ is not contained in $\fp$.  The other  isomorphisms are standard.  The concatenation of the isomorphisms is the one in the statement of the theorem.

By \cite[Corollary~8.3]{\bik:2008a} one has $\gam_{\fp}k\otimes_{k}N\cong N$ for any $\fp$-local and $\fp$-torsion $G$-module $N$. This justifies the first equality below.
\[ 
\gam_{\fp}(\StMod G) = \Loc^{\otimes}_{G}(\gam_{\fp}k) = \Loc^{\otimes}_{G}((\kos k{\fp})_{\fp})\,.
\] 
For the second one see, for example, \cite[Lemma~3.8]{\bik:2011a}. Thus, the already established part of the theorem gives the desired equality.
\end{proof}

\begin{lemma}
\label{le:building} Let $K/k$ be an extension of fields and $M$ a
$G$-module. If a $G_{K}$-module $N$ is in
$\Loc^{\otimes}_{G_{K}}(M_{K})$, then $N\da_{G}$ is in
$\Loc^{\otimes}_{G}(M)$.
\end{lemma}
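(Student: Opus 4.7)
The plan is to introduce the subcategory
\[
\mcC := \{N \in \StMod G_K \mid N{\da_G} \in \Loc^{\otimes}_G(M)\},
\]
and show that it contains $\Loc^{\otimes}_{G_K}(M_K)$. For this it suffices to verify that $\mcC$ is a localising subcategory of $\StMod G_K$ and that it contains the entire class $\{L\otimes_K M_K\mid L\in \StMod G_K\}$, since $\Loc^{\otimes}_{G_K}(M_K)$ is the smallest localising subcategory containing this class; the latter fact holds because a localising subcategory $\mcL\subseteq \StMod G_K$ containing every $L\otimes_K M_K$ is automatically tensor ideal ($L'\otimes_K(L\otimes_K M_K)\cong(L'\otimes_K L)\otimes_K M_K$, and $L'\otimes_K-$ preserves coproducts and triangles).

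First I would check that $\mcC$ is localising. Restriction $(-)\da_G\colon \StMod G_K\to \StMod G$ is exact and preserves arbitrary coproducts (indeed, it has both adjoints at our disposal: the right adjoint is coextension $(-)^K$ and the left adjoint is extension $K\otimes_k-$, cf.\ Section~\ref{se:pi-points}). Since $\Loc^{\otimes}_G(M)$ is itself closed under triangles, coproducts and summands, so is $\mcC$.

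Second, and this is the crucial computational step, I would establish the natural $G$-module isomorphism
\[
(L\otimes_K M_K)\da_G \;\cong\; L\da_G \otimes_k M.
\]
Starting from $M_K = K\otimes_k M$, the map $l\otimes(a\otimes m)\mapsto al\otimes m$ gives a $K$-linear isomorphism $L\otimes_K(K\otimes_k M)\xrightarrow{\cong} L\otimes_k M$, and one checks that under this identification the diagonal $G_K$-action $g\cdot(l\otimes(a\otimes m)) = gl\otimes(a\otimes gm)$ becomes the diagonal $G$-action $g\cdot(l\otimes m)=gl\otimes gm$. Since $\Loc^{\otimes}_G(M)$ is a tensor ideal, $L\da_G\otimes_k M$ lies in $\Loc^{\otimes}_G(M)$, so $L\otimes_K M_K\in \mcC$ for every $G_K$-module $L$.

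Combining these two steps yields $\Loc^{\otimes}_{G_K}(M_K)\subseteq \mcC$, which is exactly the assertion of the lemma. I do not anticipate a substantive obstacle: the only point that requires a moment's care is the identification in the second step, and in particular that the reduction $L\otimes_K M_K\cong L\otimes_k M$ is compatible with the $G$-actions.
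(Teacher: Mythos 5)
Your argument is correct, and it is a mild but genuine variant of the paper's. Both proofs have the same shape: convert $\Loc^{\otimes}_{G_K}(M_K)$ into an ordinary localising subcategory generated by a class of objects, observe that the preimage of $\Loc^{\otimes}_G(M)$ under $(-)\da_G$ is localising (restriction is exact and preserves coproducts), and then use a commuting isomorphism to check the generators land there. The difference lies in the generating class. You use $\{L\otimes_K M_K \mid L\in\StMod G_K\}$, which makes $\Loc^{\otimes}_{G_K}(M_K)=\Loc_{G_K}(\{L\otimes_K M_K\})$ essentially tautological, and your key isomorphism $(L\otimes_K M_K)\da_G \cong L\da_G \otimes_k M$ is a clean Hopf-algebra computation. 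The paper instead invokes the compact generator $S(G)$, the direct sum of simple $G$-modules, to write $\Loc^{\otimes}_G(M)=\Loc_G(S(G)\otimes_k M)$ on both sides, together with the fact that $S(G_K)$ is a summand of $S(G)_K$; the commuting isomorphism there moves $S(G)_K$ down rather than $M_K$. Your route is slightly more self-contained (no appeal to properties of simple modules), at the cost of working with a proper class of generators rather than a single object.

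One small imprecision to fix: it is not true that \emph{every} localising subcategory $\mcL$ containing all $L\otimes_K M_K$ is automatically tensor ideal --- $\mcL$ could contain extraneous objects $X$ for which $L'\otimes_K X\notin\mcL$. What is true, and what you need, is that the \emph{smallest} localising subcategory $\mcL_0$ containing the class is tensor ideal: for fixed $L'$, the set $\{X\mid L'\otimes_K X\in\mcL_0\}$ is localising and contains each $L\otimes_K M_K$ by the associativity you cite, hence contains $\mcL_0$. Combined with $M_K\in\mcL_0$, this gives $\Loc^{\otimes}_{G_K}(M_K)=\mcL_0$, which is the assertion your reduction rests on. With this rephrasing, the proof is sound.
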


\begin{proof} 
Let $S(G)$ denote a direct sum of a representative set of simple $G$-modules. Then $\Loc_G^\otimes(M)=\Loc_G(S(G)\otimes_k M)$. Note
that $S(G_K)$ is a direct summand of $S(G)_K$.  Now suppose that
\[
N\in \Loc^{\otimes}_{G_{K}}(M_{K})=\Loc_{G_{K}}(S(G)_K\otimes_K  M_{K})\,.
\] 
Since there is an isomorphism of $G$-modules
\[
(S(G)_K\otimes_K  M_{K})\da_G\cong S(G)\otimes_k (M_{K})\da_G\,,
\]
one gets the following 
\[
N\da_G\in \Loc_G(S(G)\otimes_k (M_{K})\da_G)=\Loc_G^\otimes((M_{K})\da_G)=\Loc_G^\otimes(M)\,,
\]
where the last equality uses that $ (M_{K})\da_G$ equals a direct sum of copies of $M$.
\end{proof}

\begin{theorem}
\label{thm:minimality} 
Let $G$ be a finite group scheme over $k$. The tensor triangulated category $\gam_{\fp}(\StMod G)$ is minimal for
each point $\fp$ in $\Proj H^{*}(G,k)$.
\end{theorem}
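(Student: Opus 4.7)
The plan is to reduce to the closed-point case handled in Proposition~\ref{pr:minimality-closed} via the generic-point construction from Section~\ref{se:generic-points}, using the concrete model for $\gam_\fp(\StMod G)$ provided by Theorem~\ref{th:residue-model}. Fix $\fp \in \Proj H^{*}(G,k)$, choose a Noether normalisation $a_{0},\dots,a_{n}$ of $H^{*}(G,k)/\fp$, and introduce $K$, $\bsb$, $\fq$, and the closed point $\fm = \sqrt{\fq}$ in $\Proj H^{*}(G_{K},K)$ as in Notation~\ref{no:generic-point}. By Theorem~\ref{th:residue-model}, it suffices to show that for any non-zero $M \in \gam_{\fp}(\StMod G)$ one has $(\kos K{\fq})\da_{G} \in \Loc^{\otimes}_{G}(M)$, since that will force $\Loc^{\otimes}_{G}(M) = \gam_{\fp}(\StMod G)$.

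To achieve this, I would extend scalars and consider the $G_{K}$-module $M_{K} \otimes_{K} \kos K{\fq}$. By Proposition~\ref{pr:basechange-supp}(1), $\supp_{G_{K}}(M_{K}) = \rho^{-1}(\{\fp\})$, which contains $\fm$ by Theorem~\ref{th:generic-point}(2). On the other hand, the Koszul support formula \eqref{eq:kos-support} gives $\supp_{G_{K}}(\kos K{\fq}) = \{\fm\}$. Combining these with the tensor product formula for support (which follows from Theorem~\ref{th:tensor-and-hom-pi}(1) and Theorem~\ref{th:pisupp=bik}) yields
\[
\supp_{G_{K}}(M_{K} \otimes_{K} \kos K{\fq}) = \{\fm\}.
\]
In particular $M_{K} \otimes_{K} \kos K{\fq}$ is non-zero and lies in $\gam_{\fm}(\StMod G_{K})$.

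Now the closed-point case applies: by Proposition~\ref{pr:minimality-closed}, the subcategory $\gam_{\fm}(\StMod G_{K})$ is minimal, so $\kos K{\fq}$ lies in $\Loc^{\otimes}_{G_{K}}(M_{K} \otimes_{K} \kos K{\fq})$. Restricting along $(-)\da_{G}$ and invoking Lemma~\ref{le:building} gives
\[
(\kos K{\fq})\da_{G} \in \Loc^{\otimes}_{G}\bigl((M_{K} \otimes_{K} \kos K{\fq})\da_{G}\bigr).
\]
Finally, since $(M_{K} \otimes_{K} \kos K{\fq})\da_{G}$ is isomorphic, as a $G$-module, to $M \otimes_{k} (\kos K{\fq})\da_{G}$, it lies in $\Loc^{\otimes}_{G}(M)$, and we conclude $(\kos K{\fq})\da_{G} \in \Loc^{\otimes}_{G}(M)$, as required.

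The main conceptual obstacle has already been absorbed into the preceding sections: constructing the closed point $\fm$ over $\fp$ (Theorem~\ref{th:generic-point}) and identifying $\gam_{\fp}(\StMod G)$ with the tensor-ideal localising subcategory generated by a restriction of a module supported at that closed point (Theorem~\ref{th:residue-model}). Given these, the remaining work is the short support calculation above plus a clean application of the change-of-category lemma. The one place to be careful is the identification of $(M_{K} \otimes_{K} \kos K{\fq})\da_{G}$ with $M \otimes_{k} (\kos K{\fq})\da_{G}$, which comes from the canonical isomorphism $M_{K} \otimes_{K} N \cong M \otimes_{k} N$ for any $G_{K}$-module $N$.
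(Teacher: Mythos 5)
Your proposal follows essentially the same route as the paper: reduce via Theorem~\ref{th:residue-model} to showing $(\kos K{\fq})\da_{G} \in \Loc^{\otimes}_{G}(M)$, compute supports to place everything over the closed point $\fm$, invoke the closed-point minimality of Proposition~\ref{pr:minimality-closed}, and descend along restriction via Lemma~\ref{le:building}. The one point to flag is your invocation of Lemma~\ref{le:building}: as stated, it applies to a module of the form $M_K$ on the $G_K$ side, whereas you apply it with $M_{K}\otimes_{K}\kos K{\fq}$, which is not a scalar extension. This is not a genuine gap, but it does require either checking that the proof of the lemma works for an arbitrary $G_K$-module $P$ (it does, replacing $(M_K)\da_G$ by $P\da_G$ throughout), or — more economically, and matching the paper — noting that $\Loc^{\otimes}_{G_K}(M_K\otimes_K\kos K{\fq})\subseteq\Loc^{\otimes}_{G_K}(M_K)$, so that $\kos K{\fq}\in\Loc^{\otimes}_{G_K}(M_K)$, and then applying Lemma~\ref{le:building} exactly as stated with $N=\kos K{\fq}$. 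The latter route also avoids the need for your final isomorphism $(M_K\otimes_K\kos K{\fq})\da_G\cong M\otimes_k(\kos K{\fq})\da_G$, streamlining the endgame.
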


\begin{proof}
Given the description of $\gam_{\fp}(\StMod G)$ in Theorem~\ref{th:residue-model}, it suffices to verify that if $\fp$ is in the support of a $G$-module $M$, then $(\kos K{\fq})\da_{G}$ is in $\Loc^{\otimes}_{G}(M)$. Let $K/k$ be the extension of fields and $\fm$ the closed point of $\Proj H^{*}(G_{K},K)$ lying over $\fp$ constructed in \ref{no:generic-point}. Then $\supp_{G_{K}}(M_{K})$ contains $\fm$, by Proposition~\ref{pr:basechange-supp}. By \eqref{eq:kos-support}, $\supp_{G_{K}}(\kos K{\fq})=\mcV(\fq)=\{\fm\}$ so Proposition~\ref{pr:minimality-closed} implies $\kos K{\fq}$ is in $\Loc^{\otimes}_{G_{K}}(M_{K})$. It follows from Lemma~\ref{le:building} that $(\kos K{\fq})\da_{G}$ is in $\Loc^{\otimes}_{G}(M)$.
\end{proof}

\part{Applications}
\label{part:applications}
The final part of this paper is devoted to applications of the results
proved in the preceding part. We proceed in several steps and derive
global results about the module category of a finite group scheme from
local properties.

As before, $G$ denotes a finite group scheme over a field $k$ of positive characteristic.

\section{Cosupport equals $\pi$-cosupport}
\label{se:cosupport}
In this section we show that $\pi$-cosupport of any $G$-module
coincides with its cosupport introduced in Section~\ref{se:bik}. The
link between them is provided by a naturally defined $G$-module,
$\alpha_{*}(K)\da_G = (KG\otimes_{K[t]/(t^{p})}K)\da_{G}$, that is the subject of the result below.  For
its proof we recall~\cite[I.8.14]{Jantzen:2003a} that given any
subgroup scheme $H$ of $G$ there is a functor
\[ 
\coind^G_H\colon \Mod H \lra \Mod G.
\]
that is \emph{left} adjoint to the restriction functor $(-)\da_{H}$ from $\Mod G$ to $\Mod H$.

\begin{lemma}
\label{le:supp-alpha_*} 
Fix a point $\fp$ in $\Proj H^{*}(G,k)$. If $\alpha\colon K[t]/(t^{p})\to KG$ is a $\pi$-point corresponding to
$\fp$, then $\supp_G(\alpha_*(K)\da_{G})=\{\fp\}$ holds.
\end{lemma}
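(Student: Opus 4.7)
The plan is to compute $\supp_{G_K}(\alpha_{*}(K))$ first and then push it down to $G$ via restriction. Throughout I identify $\supp_G$ with $\pisupp_G$ using Theorem~\ref{th:pisupp=bik}. View $\alpha$ as a $\pi$-point of $G_K$ defined over $K$, and set $\tilde\fp := \sqrt{\ker H^{*}(\alpha)}$, a homogeneous prime in $\Proj H^{*}(G_K,K)$, where $H^{*}(\alpha)\colon H^{*}(G_K,K)\to H^{*}(K[t]/(t^p),K)$ is the ring map induced by restriction along $\alpha$. Contraction along the inclusion $H^{*}(G,k)\hookrightarrow H^{*}(G_K,K)$ then gives $\rho(\tilde\fp)=\fp$, where $\rho$ is the map of Proposition~\ref{pr:basechange-supp}.

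For the upper bound $\supp_{G_K}(\alpha_{*}(K))\subseteq\{\tilde\fp\}$, I would apply Proposition~\ref{pr:change-cat-ring} to the functor $\alpha^{*}\colon \StMod G_K\to \StMod K[t]/(t^{p})$ together with the ring map $H^{*}(\alpha)$. The functor $\alpha^{*}$ is exact and preserves set-indexed products and coproducts, and the compatibility of the $H^{*}(G_K,K)$-action with $H^{*}(\alpha)$ is the standard cup-product formula. Its left adjoint is $\alpha_{*}$. Since $H^{*}(K[t]/(t^p),K)$ is essentially a graded polynomial ring in one variable, $\Proj H^{*}(K[t]/(t^{p}),K)$ has a unique point, namely $(0)$, which lies in $\supp_{K[t]/(t^{p})}(K)$ because $K$ is not projective over $K[t]/(t^p)$. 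The map $\phi$ of Proposition~\ref{pr:change-cat-ring} sends $(0)$ to $\tilde\fp$, and the left-adjoint inequality in Proposition~\ref{pr:change-cat-ring}(1) yields $\supp_{G_K}(\alpha_{*}(K))\subseteq\{\tilde\fp\}$.

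For the reverse inclusion I would verify that $\alpha_{*}(K)$ is non-projective over $G_K$. By flatness of $\alpha$, $KG$ is free as a right $K[t]/(t^{p})$-module, hence $\alpha^{*}\alpha_{*}(K)=KG/\alpha(t)KG$ is a direct sum of copies of the trivial $K[t]/(t^{p})$-module $K$, which is not projective. Since $\alpha^{*}$ preserves projectivity (again by flatness), $\alpha_{*}(K)$ is not projective, and Theorem~\ref{th:main} applied over $G_K$ forces $\supp_{G_K}(\alpha_{*}(K))$ to be non-empty, whence $\supp_{G_K}(\alpha_{*}(K))=\{\tilde\fp\}$. Finally, Proposition~\ref{pr:basechange-supp}(2) gives $\supp_G(\alpha_{*}(K)\da_{G})=\rho(\{\tilde\fp\})=\{\fp\}$, as required.

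The main technical obstacle is the verification of the compatibility hypothesis of Proposition~\ref{pr:change-cat-ring} for the functor $\alpha^{*}$, i.e., that the action of $H^{*}(G_K,K)$ on any $\alpha^{*}(M)$ factors through $H^{*}(\alpha)$; this is standard but requires unpacking the cup-product description. A secondary issue is reconciling $\Spec$ in Proposition~\ref{pr:change-cat-ring} with the use of $\Proj$ for the support theory of $\StMod$: this is harmless here because $\tilde\fp$ is a proper homogeneous prime, so the irrelevant maximal ideal plays no role.
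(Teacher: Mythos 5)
Your approach is genuinely different from the paper's and in fact more direct. The paper first factors the $\pi$-point through a quasi-elementary subgroup scheme $U \leq G_K$, handles the resulting unipotent case with $K=k$ via an explicit annihilator computation (using that $\alpha_*(k)$ is finite dimensional and the unipotence of $U$ to reduce to $\Ext^*_U(\alpha_*(k),k)$), and then applies Proposition~\ref{pr:change-cat-ring} to the restriction functor $(-)\da_U$ with left adjoint $\coind^{G_K}_U$. You instead apply Proposition~\ref{pr:change-cat-ring} directly to $\alpha^*\colon\StMod G_K\to\StMod K[t]/(t^p)$ with left adjoint $\alpha_*$, exploiting that $\Proj H^*(K[t]/(t^p),K)$ is a one-point space; this collapses the reduction steps. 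This is a real simplification: your base case is $\supp_{K[t]/(t^p)}(K)$, which is trivially the unique point, whereas the paper's base case is the nontrivial computation of $\supp_U(\beta_*(K))$.

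However, your non-projectivity argument contains a genuine gap. You assert that $\alpha^*\alpha_*(K)=KG/KG\alpha(t)$ is ``a direct sum of copies of the trivial $K[t]/(t^p)$-module $K$.'' The $K$-vector space underlying $\alpha_*(K)$ is indeed determined by the freeness of $KG$ as a right $K[t]/(t^p)$-module, but the left $K[t]/(t^p)$-action (via left multiplication by $\alpha(t)$) is not: it would be trivial only if $\alpha(t)KG\subseteq KG\alpha(t)$, which fails when $\alpha(t)$ is not central. For instance, with $G=D_8$ in characteristic two and $\alpha(t)=1+s$, one computes directly that $\alpha^*\alpha_*(k)\cong k\oplus k\oplus k[t]/(t^2)$, not a sum of trivial modules. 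The conclusion you want is nonetheless correct, and is obtained cleanly by the adjunction the paper invokes: $\alpha_*$ is left adjoint to $\alpha^*$ on stable module categories (both preserve projectives by flatness), so
\[
\sHom_{G_K}(\alpha_*(K),K)\cong\sHom_{K[t]/(t^p)}(K,\alpha^*K)=\sHom_{K[t]/(t^p)}(K,K)\neq 0,
\]
since $\alpha(t)$ lies in the augmentation ideal of $KG$, so that $\alpha^*K$ is the trivial module. Replacing your argument with this one makes the proof complete. A minor inaccuracy, which you half-acknowledge: for $p>2$ the ring $H^*(K[t]/(t^p),K)$ has nilpotents, so the unique point of $\Proj$ is $\sqrt{(0)}$ rather than $(0)$; this does not affect the computation $\phi\bigl(\sqrt{(0)}\bigr)=\sqrt{\Ker H^*(\alpha)}=\tilde\fp$.
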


\begin{proof} 
We proceed in several steps. Suppose first that $K=k$ and that $G$ is unipotent. Since $\alpha_*(k)$ is a finite dimensional $k$-vector space $\supp_G(\alpha_*(k))$ coincides with the set of prime ideals in $\Proj H^*(G,k)$ containing the annihilator of the $H^*(G,k)$-module $\Ext_{G}^*(\alpha_*(k),\alpha_*(k))$; see \cite[Theorem~5.5]{\bik:2008a}. This annihilator coincides with that of $\Ext_{G}^*(\alpha_*(k),k)$, since $G$ is unipotent, where $H^*(G,k)$ acts via the canonical map $H^*(G,k)\to\Ext_{G}^*(\alpha_*(k),\alpha_*(k))$. Adjunction yields an
isomorphism
\[ 
\Ext_{G}^*(\alpha_*(k),k)\cong \Ext_{k[t]/(t^p)}^*(k,k)
\] 
and we see that the action of $H^*(G,k)$ factors through the canonical map
\[ 
H^*(\alpha)\colon H^*(G,k)\lra H^*(k[t]/(t^p),k)
\] 
that is induced by restriction via $\alpha$. Thus the annihilator of $\Ext_{G}^*(\alpha_*(k),\alpha_*(k))$ has the same radical as $\Ker
H^*(\alpha)$, which is $\fp$. It follows that $\supp_G(\alpha_{*}(k))=\{\fp\}$.

Now let $\alpha$ be arbitrary. We may assume that it factors as
\[ 
K[t]/(t^p) \xra{\ \beta\ } KU \lra KG
\] 
where $U$ is a quasi-elementary subgroup scheme of $G_{K}$; see Remark~\ref{re:pi-basics}(2). Note that $\beta$ defines a $\pi$-point
of $U$; call it $\fm$. The first part of this proof yields an equality
\[ 
\supp_U(\beta_{*}(K)) = \{\fm\}\,.
\] 
Let $f\colon H^*(G_{K},K) \to H^*(U,K)$ be the restriction map and $\phi$ the map it induces on $\Proj$. Note that $\phi(\fm)$ is the
$\pi$-point of $G_{K}$ corresponding to $\alpha$. Therefore, applying Proposition~\ref{pr:change-cat-ring} to the pair
\[ 
((-)\da_{U},f)\colon \StMod G_{K} \to \StMod U\,,
\] 
one gets the inclusion below
\[ 
\supp_{G_{K}}(\alpha_{*}(K)) = \supp_{G_{K}}(\coind_U^{G_{K}}(\beta_{*}(K)))\subseteq \phi(\supp_U(\beta_{*}K)) = \{\phi(\fm)\}.
\]
Since $\Ext^{*}_{G_{K}}(\alpha_{*}(K),K)$ is non-zero, by adjointness, $\alpha_{*}(K)$ is not projective. Thus its support equals $\{\phi(\fm)\}$.  It remains to apply Proposition~\ref{pr:basechange-supp}(2).
\end{proof}

\begin{lemma}
\label{le:pi-cosupp-alpha_*} Let $\alpha\colon K[t]/(t^{p})\to KG$ be
a $\pi$-point corresponding to a point $\fp$ in $\Proj H^{*}(G,k)$, and
$M$ a $G$-module. The following conditions are equivalent.
\begin{enumerate}[\quad\rm(1)]
\item $\fp$ is in $\picosupp_{G}(M)$;
\item $\Hom_k(\alpha_*(K)\da_{G},M)$ is not projective;
\item $\sHom_{G}(\alpha_*(K)\da_{G},M)\ne 0$.
\end{enumerate}
\end{lemma}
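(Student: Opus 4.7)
The plan is to translate all three conditions into statements about $K[t]/(t^p)$-modules via a sequence of adjunctions and then exploit the elementary structure of $\Mod K[t]/(t^p)$. The key computation is the natural isomorphism, for any $kG$-module $S$ and any $i\ge 0$,
\[
\Ext^i_G\!\big(S\otimes_k\alpha_*(K)\da_G,\,M\big)\;\cong\;\Ext^i_{K[t]/(t^p)}\!\big(K,\;\alpha^*(\Hom_k(S,M)^K)\big).
\]
This is obtained by composing the restriction/coextension adjunction $(-)\da_G\dashv(-)^K$, the identification $S\otimes_k\alpha_*(K)\da_G\cong(S_K\otimes_K\alpha_*(K))\da_G$ followed by Hom-tensor adjunction against the finite-dimensional $S_K$, and the $(\alpha_*,\alpha^*)$ adjunction. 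Promoting from $\Hom$ to $\Ext^i$ requires that each of $(-)\da_G$, $(-)^K$, and $\alpha^*$ preserve projective modules (equivalently, by the Frobenius property of $kG$, $KG$ and $K[t]/(t^p)$, injective modules): the first because $KG$ is free over $kG$, the second as the right adjoint of an exact functor, and the third because $KG$ is free over $K[t]/(t^p)$ by flatness of $\alpha$. Applying the Hom identity to an injective resolution of $M$ then yields the statement for all $i$.

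For $(1)\Leftrightarrow(3)$, take $S=k$; the projective-preservation properties imply the above identity descends to stable morphisms,
\[
\sHom_G(\alpha_*(K)\da_G,M)\;\cong\;\sHom_{K[t]/(t^p)}\!\big(K,\alpha^*(\Hom_k(K,M))\big).
\]
Every $K[t]/(t^p)$-module is a direct sum of the indecomposables $K[t]/(t^j)$ for $1\le j\le p$, and a direct computation shows that $\sHom_{K[t]/(t^p)}(K,K[t]/(t^j))\ne 0$ exactly when $j<p$, i.e.\ exactly when the summand is non-projective. Hence $\sHom_{K[t]/(t^p)}(K,V)\ne 0$ if and only if $V$ is non-projective, which by the definition of $\pi$-cosupport is precisely $(1)$.

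For $(1)\Leftrightarrow(2)$, use the criterion that a $G$-module $W$ is projective if and only if $\Ext^i_G(S,W)=0$ for every simple $kG$-module $S$ and every $i\ge 1$. Apply this with $W=\Hom_k(\alpha_*(K)\da_G,M)$, use Hom-tensor adjunction to rewrite $\Ext^i_G(S,W)\cong\Ext^i_G(S\otimes_k\alpha_*(K)\da_G,M)$, and invoke the displayed adjunction identity; the vanishing of the right-hand side for $i\ge 1$ is equivalent to $\alpha^*(\Hom_k(S,M)^K)$ being projective over $K[t]/(t^p)$, that is, to $\fp\notin\picosupp_G(\Hom_k(S,M))$. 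By Theorem~\ref{th:tensor-and-hom-pi}(2), the latter cosupport equals $\pisupp_G(S)\cap\picosupp_G(M)$. Therefore $(2)$ holds if and only if $\fp\in\pisupp_G(S)\cap\picosupp_G(M)$ for some simple $S$; taking $S=k$ (so that $\pisupp_G(k)=\Proj H^*(G,k)$) forces this to reduce to $\fp\in\picosupp_G(M)$, which is $(1)$.

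The main obstacle is managing the chain of four adjunctions and checking that each descends compatibly to derived and stable functors. The crucial technical point is that the coextension $(-)^K$ sends projective $G$-modules to projective $G_K$-modules and that $\alpha^*$ sends projective $KG$-modules to projective $K[t]/(t^p)$-modules; combined with the Frobenius coincidence of projective and injective objects, these properties are what allow the Hom-level adjunction to propagate to all $\Ext^i$ and to stable Hom. Once this infrastructure is set up, the remaining input is the elementary classification of $K[t]/(t^p)$-indecomposables and a single application of Theorem~\ref{th:tensor-and-hom-pi}(2).
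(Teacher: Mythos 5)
Your proof is correct and follows essentially the same approach as the paper: the same chain of adjunctions (restriction/coextension across $K/k$, Hom-tensor, and $(\alpha_*,\alpha^*)$), the same use of simple modules to detect projectivity, and the same invocation of Theorem~\ref{th:tensor-and-hom-pi}(2). The only cosmetic difference is that you run the argument for (1)$\Leftrightarrow$(2) at the level of $\Ext$ with individual simples, re-deriving the adjunction identity there, whereas the paper works with stable $\Hom$ and the direct sum of all simples, using (1)$\Leftrightarrow$(3) as an intermediate step; the underlying mechanism is identical.
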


\begin{proof} The equivalence of (1) and (3) follows from the
  definition of $\pi$-cosupport and the following standard adjunction
  isomorphisms
\[ \sHom_{K[t]/(t^p)}(K,\alpha^*(M^K)) \cong
\sHom_{G_K}(\alpha_*(K),M^K) \cong \sHom_{G}(\alpha_*(K)\da_G,M)
\]

(1)$\iff$(2) Let $S$ be the direct sum of a representative set of
simple $kG$-modules. Since $\pisupp_{G}(S)$ equals $\Proj H^{*}(G,k)$,
Theorem~\ref{th:tensor-and-hom-pi} yields an equality
\[ \picosupp_{G}(M)=\picosupp_{G}(\Hom_k(S,M))\,.
\] This justifies the first of the following equivalences.
\begin{align*} \fp\in\picosupp_{G}(M)\quad&\iff\quad
\fp\in\picosupp_{G}(\Hom_k(S,M))\\ &\iff\quad
\sHom_{G}(\alpha_*(K)\da_G,\Hom_k(S,M))\neq 0\\ &\iff\quad
\sHom_{G}(\alpha_*(K)\da_G\otimes_k S,M)\neq 0\\ &\iff\quad
\sHom_{G}(S,\Hom_k(\alpha_*(K)\da_{G},M))\neq 0\\ &\iff\quad
\Hom_k(\alpha_*(K)\da_{G},M) \text{ is not projective}.
\end{align*} The second one is (1)$\iff$(3) applied to
$\Hom_{k}(S,M)$; the third and the fourth are standard adjunctions,
and the last one is clear.
\end{proof}

\begin{theorem}
\label{th:picosupp=bik} Let $G$ be a finite group scheme over a field
$k$. Viewed as subsets of $\Proj H^*(G,k)$ one has
$\picosupp_G(M)=\cosupp_G(M)$ for any $G$-module $M$.
\end{theorem}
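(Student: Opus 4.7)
The plan is to leverage Lemma~\ref{le:pi-cosupp-alpha_*}, which reformulates membership in $\picosupp_G(M)$ as non-projectivity of a specific $G$-module $\Hom_k(\alpha_*(K)\da_G, M)$, and then to exploit the minimality of $\gam_\fp(\StMod G)$ from Theorem~\ref{thm:minimality}. The strategy is to exhibit a tensor ideal localising subcategory of $\StMod G$ that absorbs the ``cosupport'' information, and then to compare its intersection with $\gam_\fp(\StMod G)$ against two canonical generators: $\gam_\fp k$ (controlling $\cosupp$) and $\alpha_{\fp,*}(K)\da_G$ (controlling $\picosupp$).

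Concretely, fix a $G$-module $M$ and introduce
\[
\mcD(M) := \{X \in \StMod G \mid \Hom_k(X,M) \text{ is projective}\}.
\]
The first step is to verify that $\mcD(M)$ is a tensor ideal localising subcategory of $\StMod G$. Closure under shifts, extensions, and summands is immediate from the fact that $\Hom_k(-,M)$ takes triangles in $\StMod G$ to triangles. Closure under set-indexed coproducts uses that $\Hom_k(\bigoplus_i X_i,M) \cong \prod_i \Hom_k(X_i,M)$ together with the fact that $kG$ is self-injective, so a product of projective (equivalently, injective) $kG$-modules is projective. The tensor ideal property comes from the adjunction $\Hom_k(X \otimes_k N, M) \cong \Hom_k(N, \Hom_k(X,M))$ and Lemma~\ref{le:end-projectivity}(1).

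Next, fix $\fp \in \Proj H^*(G,k)$ and a representative $\pi$-point $\alpha_\fp\colon K[t]/(t^p) \to KG$. By Lemma~\ref{le:supp-alpha_*}, $\supp_G(\alpha_{\fp,*}(K)\da_G) = \{\fp\}$, so $\alpha_{\fp,*}(K)\da_G$ lies in $\gam_\fp(\StMod G)$ and is non-zero. The object $\gam_\fp k$ also lies in $\gam_\fp(\StMod G)$, and it is non-zero because $\pisupp_G(k) = \Proj H^*(G,k)$ combined with Theorem~\ref{th:pisupp=bik} gives $\fp \in \supp_G(k)$. The intersection $\mcD(M) \cap \gam_\fp(\StMod G)$ is a tensor ideal localising subcategory of $\gam_\fp(\StMod G)$, and by Theorem~\ref{thm:minimality} it is either zero or all of $\gam_\fp(\StMod G)$. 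Hence $\gam_\fp k \in \mcD(M)$ if and only if $\alpha_{\fp,*}(K)\da_G \in \mcD(M)$.

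The final step is translation: the condition $\gam_\fp k \in \mcD(M)$ is, by definition, $\fp \notin \cosupp_G(M)$, while $\alpha_{\fp,*}(K)\da_G \in \mcD(M)$ is exactly the negation of condition (2) in Lemma~\ref{le:pi-cosupp-alpha_*}, i.e.\ $\fp \notin \picosupp_G(M)$. The equivalence just established therefore yields $\cosupp_G(M) = \picosupp_G(M)$. The only subtlety, and the step that genuinely requires the heavy machinery of Part~\ref{part:minimality}, is the invocation of minimality of $\gam_\fp(\StMod G)$; verifying the tensor ideal localising structure of $\mcD(M)$ is routine once the self-injectivity of $kG$ is noted.
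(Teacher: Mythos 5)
Your proof is correct and takes essentially the same route as the paper: both arguments combine Lemma~\ref{le:pi-cosupp-alpha_*}, Lemma~\ref{le:supp-alpha_*}, and the minimality of $\gam_{\fp}(\StMod G)$ from Theorem~\ref{thm:minimality} to pass between $\gam_{\fp}k$ and $\alpha_{\fp,*}(K)\da_{G}$. The only difference is one of exposition: the paper asserts that the two objects ``generate the same tensor ideal localising subcategory'' and leaves it to the reader to see why this forces $\Hom_{k}(\gam_{\fp}k,M)$ and $\Hom_{k}(\alpha_{\fp,*}(K)\da_{G},M)$ to be simultaneously projective, whereas you make the implicit auxiliary category $\mcD(M)=\{X : \Hom_{k}(X,M)\text{ projective}\}$ explicit and verify directly that it is a tensor ideal localising subcategory (using self-injectivity of $kG$ for closure under coproducts, and Lemma~\ref{le:end-projectivity}(1) with the tensor-Hom adjunction for the ideal property). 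That verification is precisely what justifies the paper's one-line appeal to ``generate the same subcategory,'' so your write-up is a fuller version of the same proof.
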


\begin{proof} 
The first of the following equivalences is Lemma~\ref{le:pi-cosupp-alpha_*}. 
\begin{align*} 
\fp\in\picosupp_{G}(M)\quad
&\iff\quad \Hom_k(\alpha_*(K)\da_{G},M) \text{ is not projective}\\ 
&\iff\quad \Hom_k(\gam_\fp k,M) \text{ is not projective}\\
&\iff\quad\fp\in\cosupp_{G}(M).
\end{align*} 
The second one holds because $\alpha_*(K)\da_{G}$ and $\gam_\fp k$ generate the same tensor ideal localising subcategory of $\StMod G$. This is a consequence of Theorem~\ref{thm:minimality} because $\supp_G(\alpha_*(K)\da_{G})=\{\fp\}$ by
Lemma~\ref{le:supp-alpha_*}. The final equivalence is simply the definition of cosupport.
\end{proof}

Here is a first consequence of this result. We have been unable to
verify the statement about maximal elements directly, except for
closed points in the $\pi$-support and $\pi$-cosupport; see
Lemma~\ref{le:maxsupp=maxcosupp}.

\begin{corollary}
\label{co:maxsupp=maxcosupp} For any $G$-module $M$ the maximal
elements, with respect to inclusion, in $\picosupp_{G}(M)$ and
$\pisupp_{G}(M)$ coincide. In particular, $M$ is projective if and
only if $\picosupp_{G}(M)=\varnothing$.
\end{corollary}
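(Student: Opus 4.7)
The plan is to prove the two inclusions $\picosupp_{G}(M)\subseteq \pisupp_{G}(M)$ and $\pisupp_{G}(M)\subseteq \picosupp_{G}(M)$, from which coincidence of maximal elements is immediate. Both arguments pass through the identifications $\pisupp_{G}=\supp_{G}$ and $\picosupp_{G}=\cosupp_{G}$ of Theorems~\ref{th:pisupp=bik} and~\ref{th:picosupp=bik}, and rely on the stratification furnished by Theorem~\ref{thm:minimality}. The \emph{in particular} clause follows: since $\Proj H^{*}(G,k)$ is Noetherian of finite Krull dimension, every non-empty subset has maximal elements, so vanishing of $\picosupp_{G}(M)$ is equivalent to vanishing of its set of maximal elements, which by the main assertion is equivalent to vanishing of $\pisupp_{G}(M)$, and Theorem~\ref{th:main} identifies this with projectivity of $M$.

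For $\picosupp_{G}(M)\subseteq \pisupp_{G}(M)$, fix $\fp \in \picosupp_{G}(M)$. Lemma~\ref{le:pi-cosupp-alpha_*} provides a non-zero element in $\sHom_{G}(\alpha_{*}(K)\da_{G},M)$, and Lemma~\ref{le:supp-alpha_*} places $\alpha_{*}(K)\da_{G}$ in $\gam_{\fp}(\StMod G)$. Since $\gam_{\fp}$ is right adjoint to the inclusion $\gam_{\fp}(\StMod G)\hookrightarrow \StMod G$, one has a natural isomorphism $\sHom_{G}(\alpha_{*}(K)\da_{G},M)\cong \sHom_{G}(\alpha_{*}(K)\da_{G},\gam_{\fp}M)$, which forces $\gam_{\fp}M\neq 0$ and hence $\fp \in \pisupp_{G}(M)$.

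For the reverse inclusion, fix $\fp \in \pisupp_{G}(M)$, so that $\gam_{\fp}M$ and $\alpha_{*}(K)\da_{G}$ are both non-zero in the tensor ideal localising subcategory $\gam_{\fp}(\StMod G)$, which is minimal by Theorem~\ref{thm:minimality}. The key step is to verify that $\Hom_{k}(\alpha_{*}(K)\da_{G},\gam_{\fp}M)$ is not projective: if it were, then tensor--hom adjunction would yield $\sHom^{*}_{G}(Z\otimes_{k}\alpha_{*}(K)\da_{G},\gam_{\fp}M)=0$ for every $Z\in \StMod G$; the full subcategory $\{W\in \StMod G\mid \sHom^{*}_{G}(W,\gam_{\fp}M)=0\}$ is a localising subcategory containing every such $Z\otimes_{k}\alpha_{*}(K)\da_{G}$, and hence contains the tensor ideal localising subcategory $\Loc^{\otimes}_{G}(\alpha_{*}(K)\da_{G})$, which equals $\gam_{\fp}(\StMod G)$ by minimality; specialising to $W=\gam_{\fp}M$ produces $\sHom^{*}_{G}(\gam_{\fp}M,\gam_{\fp}M)=0$, contradicting non-vanishing of the identity. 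Next, since $Z\otimes_{k}\alpha_{*}(K)\da_{G}\in \gam_{\fp}(\StMod G)$ for every $Z$, combining tensor--hom adjunction with the $\gam_{\fp}$-adjunction identifies $\sHom^{*}_{G}(Z,\Hom_{k}(\alpha_{*}(K)\da_{G},\gam_{\fp}M))$ with $\sHom^{*}_{G}(Z,\Hom_{k}(\alpha_{*}(K)\da_{G},M))$ naturally in $Z$; Yoneda then yields $\Hom_{k}(\alpha_{*}(K)\da_{G},M)\cong \Hom_{k}(\alpha_{*}(K)\da_{G},\gam_{\fp}M)$ in $\StMod G$, so the former is also not projective, and Lemma~\ref{le:pi-cosupp-alpha_*} places $\fp \in \picosupp_{G}(M)$. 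The main obstacle is the minimality-to-non-projectivity step, handled by the orthogonality argument above.
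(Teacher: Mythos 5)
Your plan — to prove the two inclusions $\picosupp_{G}(M)\subseteq\pisupp_{G}(M)$ and $\pisupp_{G}(M)\subseteq\picosupp_{G}(M)$ — is more than the corollary asserts, and in fact too much: it would give $\picosupp_{G}(M)=\pisupp_{G}(M)$, which is false in general for infinite-dimensional modules. The corollary deliberately claims only that the \emph{maximal} elements of the two sets coincide, and the sentence immediately preceding it in the paper ("We have been unable to verify the statement about maximal elements directly, except for closed points") is a warning that the full equality fails. For finite-dimensional $M$ the two sets do agree (as remarked at the very end of the paper), but that is not the general case.

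The specific step that breaks is your assertion that $\gam_{\fp}$ is right adjoint to the inclusion $\gam_{\fp}(\StMod G)\hookrightarrow\StMod G$. It is not. One has $\gam_{\fp}=\gam_{\mcV(\fp)}\comp L_{\mcZ(\fp)}$: here $\gam_{\mcV(\fp)}$ is indeed right adjoint to the inclusion of $\mcV(\fp)$-torsion objects, but $L_{\mcZ(\fp)}$ is a localisation, hence \emph{left} adjoint to the inclusion of $\fp$-local objects, and the composite is neither a left nor a right adjoint to the inclusion of $\gam_{\fp}(\StMod G)$. The adjunction that is actually available, for $A:=\alpha_{*}(K)\da_{G}$ with $A\cong\gam_{\fp}k\otimes_{k}A$, reads
\[
\sHom_{G}(A,M)\;\cong\;\sHom_{G}(\gam_{\fp}k\otimes_{k}A,\,M)\;\cong\;\sHom_{G}(A,\,\Hom_{k}(\gam_{\fp}k,M))\,,
\]
and $\Hom_{k}(\gam_{\fp}k,-)$ is precisely the functor whose non-vanishing defines $\cosupp_{G}$ — not $\gam_{\fp}(-)$, whose non-vanishing defines $\supp_{G}$. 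So the first inclusion, corrected, collapses into the tautology $\picosupp_{G}(M)\subseteq\cosupp_{G}(M)$; it never reaches $\pisupp_{G}(M)$. The same spurious adjunction reappears in your second inclusion as the purported isomorphism $\Hom_{k}(A,M)\cong\Hom_{k}(A,\gam_{\fp}M)$, which also fails; the orthogonality argument from minimality in step (a) is fine, but it only lands you back at $\Hom_{k}(A,\gam_{\fp}M)$ not being projective, and there is no bridge from there to $\Hom_{k}(A,M)$.

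The paper's proof is a one-liner: it applies Theorems~\ref{th:pisupp=bik} and~\ref{th:picosupp=bik} to rewrite $\pi$-support and $\pi$-cosupport as $\supp_{G}$ and $\cosupp_{G}$, and then cites the general result \cite[Theorem~4.5]{Benson/Iyengar/Krause:2012b} that, in a compactly generated $R$-linear triangulated category, the support and cosupport of any object have the same maximal elements. If you want to prove the corollary from scratch inside this paper, the right route is still via the stratification (Theorem~\ref{thm:minimality}), but you must track $\Lambda^{\fp}:=\Hom_{k}(\gam_{\fp}k,-)$ against $\gam_{\fp}$ honestly rather than conflating them.
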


\begin{proof} Given Theorems~\ref{th:pisupp=bik} and
\ref{th:picosupp=bik}, this is a translation of
\cite[Theorem~4.5]{\bik:2012b}.
\end{proof}

The next result describes support and cosupport for a subgroup scheme
$H$ of $G$; this complements Proposition~\ref{pr:basechange-supp}. 

Recall that induction and coinduction are related as follows
\begin{equation}
\label{eq:character} 
\ind_H^G(M) \cong \coind_H^G(M \otimes_k \mu)\,,
\end{equation}
with $\mu$ the character of $H$ dual to $(\delta_G)\da_{H}\delta^{-1}_H$, where $\delta_G$ is a linear character of $G$ called the 
\emph{modular function}; see \cite[Proposition~I.8.17]{Jantzen:2003a}.

\begin{proposition}
\label{pr:ind} 
Let $H$ be subgroup scheme of a finite group scheme $G$ over $k$ and
$\rho\colon \Proj H^*(H,k) \to \Proj H^*(G,k)$ 
the map induced by restriction.
 \begin{enumerate}[\quad\rm(1)]
\item For any $G$-module $N$ the following equalities hold
\[ \supp_H(N\da_{H}) = \rho^{-1}(\supp_G(N)) \quad\text{and}\quad
\cosupp_H(N\da_{H}) = \rho^{-1}(\cosupp_G(N))
\]
\item For any $H$-module $M$ the following inclusions hold.
\[ \supp_G(\ind_H^G M) \subseteq \rho(\supp_H(M)) \quad\text{and}\quad
\cosupp_G(\ind_H^G M) \subseteq \rho(\cosupp_H(M)).
\] 
\noindent These become equalities when $G$ is a finite group or $H$
is unipotent.
\end{enumerate}
\end{proposition}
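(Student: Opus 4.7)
Part (1) admits a direct proof via the $\pi$-point description of support and cosupport combined with Theorems~\ref{th:pisupp=bik} and~\ref{th:picosupp=bik}, while part (2) proceeds via the change-of-rings-and-categories formalism of Proposition~\ref{pr:change-cat-ring}. For part (1), I would start by noting that a $\pi$-point $\alpha\colon K[t]/(t^{p})\to KH$ representing a point $\fq\in\Proj H^{*}(H,k)$ becomes, by composition with the inclusion $KH\hookrightarrow KG$, a $\pi$-point $\tilde\alpha$ of $G$ (Remark~\ref{re:pi-basics}(3)). Since $\rho$ is induced by restriction on cohomology and $\Ker H^{*}(\tilde\alpha)$ is the contraction of $\Ker H^{*}(\alpha)$, this new $\pi$-point represents $\rho(\fq)$. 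Both $K\otimes_{k}-$ and $\Hom_{k}(K,-)$ commute with restriction to $H$, and the pullback $\alpha^{*}$ is insensitive to whether $\alpha$ is viewed as a map into $KH$ or $KG$. Consequently $\fq\in\pisupp_{H}(N\da_{H})$ if and only if $\rho(\fq)\in\pisupp_{G}(N)$, and analogously for $\pi$-cosupport; translating via Theorems~\ref{th:pisupp=bik} and~\ref{th:picosupp=bik} then yields part (1).

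For the inclusions in part (2), I would apply Proposition~\ref{pr:change-cat-ring} to the pair consisting of the restriction functor $F=(-)\da_{H}\colon\StMod G\to\StMod H$ and the restriction map $f\colon H^{*}(G,k)\to H^{*}(H,k)$. The functor $F$ is exact, preserves set-indexed products and coproducts, and respects the cohomology actions; its left adjoint is $E=\coind_{H}^{G}$ and its right adjoint is $\ind_{H}^{G}$, both of which exist for closed subgroup schemes. The proposition immediately yields $\supp_{G}(\coind_{H}^{G}M)\subseteq\rho(\supp_{H}(M))$ and $\cosupp_{G}(\ind_{H}^{G}M)\subseteq\rho(\cosupp_{H}(M))$. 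To replace $\coind$ by $\ind$ in the first inclusion, I would invoke~\eqref{eq:character} to rewrite $\ind_{H}^{G}M\cong\coind_{H}^{G}(M\otimes_{k}\mu)$ and observe that the auto-equivalence $-\otimes_{k}\mu$ of $\Mod H$ preserves support.

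The main obstacle is establishing the equalities in the final claim. When $H$ is unipotent or $G$ is a finite group, I would first argue that the modular character $\mu$ is trivial: $H$ admits no non-trivial one-dimensional representations in the first case, while $kG$ and $kH$ are symmetric algebras in the second so that the modular functions $\delta_{G}$ and $\delta_{H}$ are both trivial. Hence $\ind_{H}^{G}=\coind_{H}^{G}$, and by Proposition~\ref{pr:change-cat-ring} it suffices to verify this common functor is faithful on objects of $\StMod H$. When $H$ is unipotent, I would exploit that $\coind_{H}^{G}$ preserves projectives (as a left adjoint to the exact restriction functor) to obtain $\Ext^{*}_{G}(\coind_{H}^{G}M,k)\cong\Ext^{*}_{H}(M,k)$ by adjunction; if $\coind_{H}^{G}M$ is projective, this vanishes in positive degrees, and since $k$ is the unique simple $H$-module, $M$ must be projective. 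When $G$ is a finite group, I would exhibit $M$ as a direct summand of $(\ind_{H}^{G}M)\da_{H}$ via the $kH$-bimodule projection $kG\to kH$ supported on $H$, and conclude using that $(-)\da_{H}$ preserves projectives since $kG$ is $kH$-free. Verifying triviality of $\mu$ and extracting faithfulness on objects in these two cases are the subtle points; everything else is bookkeeping with results already assembled.
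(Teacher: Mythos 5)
Your proof follows essentially the same route as the paper's, and is correct. For part (1), both you and the paper pass to $\pi$-support/$\pi$-cosupport via Theorems~\ref{th:pisupp=bik} and~\ref{th:picosupp=bik} and then use that a $\pi$-point of $H$ is one of $G$. For the inclusions in (2), both apply Proposition~\ref{pr:change-cat-ring} to $(-)\da_{H}$, obtaining the cosupport inclusion from the right adjoint $\ind_{H}^{G}$ and the support inclusion from the left adjoint $\coind_{H}^{G}$, then transfer from $\coind$ to $\ind$ using~\eqref{eq:character} together with the observation that twisting by the one-dimensional character $\mu$ does not change support.

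Where you genuinely add something is the final claim. The paper simply asserts that ``under the additional hypotheses $\ind_{H}^{G}(-)$ ... is faithful on objects'' without proof, leaving the reader to supply both the verification and the observation that faithfulness of $\ind$ also gives faithfulness of $\coind$ (they differ by the invertible twist $-\otimes_{k}\mu$). You instead first argue that $\mu$ is trivial in the two cases — because a local group algebra $kH$ has only the trivial one-dimensional representation when $H$ is unipotent, and because $kG$, $kH$ are symmetric for finite groups — so that $\ind = \coind$ outright; you then prove faithfulness directly. Your unipotent argument via $\Ext^{\ges 1}_{H}(M,k)=0$ is correct but slightly more delicate than it looks: it works because $\Ext^{1}_{H}(M,k)\cong\Hom_{H}(\Omega M,k)$ for a projective cover $P\to M$ (the restriction map $\Hom_{H}(P,k)\to\Hom_{H}(\Omega M,k)$ is zero since $\Omega M\subseteq JP$), and over the local Artinian ring $kH$ this Hom vanishes only when $\Omega M=0$. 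Spelling that out would strengthen the argument; alternatively, one can use the right-adjoint adjunction to get $\Ext^{\ges 1}_{H}(k,M)=0$ and invoke Baer's criterion, which is perhaps the more standard route and avoids the subtlety. Your finite group argument (direct summand via the bimodule projection $kG\twoheadrightarrow kH$) is clean and correct.
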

\begin{proof}
 (1) Since any $\pi$-point of $H$ induces a $\pi$-point of $G$, the stated equalities are clear when one replaces support and
cosupport by $\pi$-support and $\pi$-cosupport, respectively. It remains to recall Theorems ~\ref{th:pisupp=bik} and
\ref{th:picosupp=bik}.

(2)  Since $\ind$ is right adjoint to restriction, the inclusion of cosupports is a  consequence of Proposition~\ref{pr:change-cat-ring} applied to the functor
\[ 
((-)\da_{H},f)\colon (\StMod G, H^{*}(G,k)) \lra (\StMod H, H^{*}(H,k))\,,
\]  
where $f\colon H^{*}(G,k)\to H^{*}(H,k)$ is the homomorphism of $k$-algebras induced by restriction. By the same token, as coinduction is left adjoint to restriction one gets
\[
\supp_G(\coind_H^G M) \subseteq \rho(\supp_H(M))\,.
\] 
By equation \eqref{eq:character}, there is a one-dimensional representation $\mu$ of $H$ such that
\[
\supp_G(\ind_H^G M) = \supp_G(\coind_H^G (M \otimes_k \mu))\,.
\]  
This yields the inclusion below.
\[
\supp_G(\ind_H^G M) \subseteq \rho(\supp_H(M \otimes_k \mu)) = \rho(\supp_H(M) \cap \supp_H(\mu)) = \rho(\supp_H(M)).
\]
The first equality is by Theorem~\ref{th:tensor-and-hom-pi} while the second one holds because the support of any one-dimensional representation $\mu$ is $\Proj H^{*}(G,k)$, as follows, for example, because $\Hom_{k}(\mu,\mu)$ is isomorphic to $k$.

Concerning the equalities, the key point is that under the additional hypotheses $\ind_H^G(-)$, which is right adjoint to $(-)\da_{H}$, is faithful on objects.
\end{proof}

\begin{example} 
One of many differences between finite groups and connected group schemes is that Proposition~\ref{pr:ind}(2) may fail
for the latter, because induction is not faithful on objects in general.

For example, let $\mcG = \SL_{n}$ and $\mcB$ be its standard Borel subgroup. Take $G = \mcG_{(r)}$, $H = \mcB_{(r)}$, and $\lambda = \rho (p^r-1)$ where $\rho$ is the half sum of all positive roots for the root system of $\mcG$.  Let $k_\lambda$ be the one-dimensional
representation of $H$ given by the character $\lambda$. Then $\ind_{H}^{G}k_\lambda$ is the \emph{Steinberg module} for $G$; in particular, it is projective.  Hence, $\ind\colon \stmod H \to \stmod G$ is not faithful on objects, and both inclusions of Proposition~\ref{pr:ind}(2) are strict for $M=k_{\lambda}$.
\end{example}

\section{Stratification} 
\label{se:stratification}
In this section we establish for a finite group scheme the
classification of tensor ideal localising subcategories of its stable
module category and draw some consequences. The development follows
closely the one in \cite[Sections~10 and 11]{\bik:2011b}. For this
reason, in the remainder of the paper, we work exclusively with
supports as 
defined in Section~\ref{se:bik}, secure in the knowledge afforded by
Theorem~\ref{th:pisupp=bik} that the discussion could just as well be
phrased in the language of $\pi$-points.

\begin{theorem}
\label{th:stratification} 
Let $G$ be a finite group scheme over a field $k$. Then the stable module category $\StMod G$ is stratified as a tensor triangulated category by the natural action of the cohomology ring $H^*(G,k)$. Therefore the assignment
\begin{equation}
\label{eq:supp} 
\sfC\longmapsto \bigcup_{M\in\sfC}\supp_G(M)
\end{equation} 
induces a bijection between the tensor ideal localising subcategories of $\StMod G$ and the subsets of $\Proj H^*(G, k)$.
\end{theorem}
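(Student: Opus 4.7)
The proof plan is essentially to assemble what has already been proved. Recall from Section~\ref{se:bik} that stratification of $\StMod G$ by $H^{*}(G,k)$ means: for every homogeneous prime $\fp$ in $\Spec H^{*}(G,k)$, the tensor ideal localising subcategory $\gam_{\fp}(\StMod G)$ is either zero or contains no proper non-zero tensor ideal localising subcategory. Once stratification is established, the bijection in \eqref{eq:supp} is a purely formal consequence of \cite[Theorem~3.8]{\bik:2011b}, so the real content is the stratification statement.

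First I would dispose of the unique closed point $\fm:=H^{\ges 1}(G,k)$ of $\Spec H^{*}(G,k)$, which is not a point of $\Proj H^{*}(G,k)$. For any $M$ in $\gam_{\fm}(\StMod G)$, the cohomological support of $M$ is contained in $\{\fm\}$, and in particular $\supp_{G}(M)\subseteq\Proj H^{*}(G,k)$ is empty; by Theorems~\ref{th:pisupp=bik} and \ref{th:main} this forces $M$ to be projective, i.e.\ zero in $\StMod G$. Hence $\gam_{\fm}(\StMod G)=0$ and the stratification condition at $\fm$ is vacuous.

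For every remaining prime $\fp\in\Proj H^{*}(G,k)$, the minimality of $\gam_{\fp}(\StMod G)$ is exactly the content of Theorem~\ref{thm:minimality}. Together with the preceding paragraph, this shows that $\StMod G$ is stratified by $H^{*}(G,k)$ in the sense of \cite[Section~7]{\bik:2011a}.

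With stratification in hand, \cite[Theorem~3.8]{\bik:2011b} produces a bijection between tensor ideal localising subcategories $\sfC\subseteq\StMod G$ and subsets $\mcS\subseteq\Spec H^{*}(G,k)$ whose inverse is $\mcS\mapsto\{M\mid\supp_{G}(M)\subseteq\mcS\}$; the map $\sfC\mapsto\bigcup_{M\in\sfC}\supp_{G}(M)$ is the forward direction. Because the closed point $\fm$ never lies in $\supp_{G}(M)$ (by construction in Section~\ref{se:bik}) and $\gam_{\fm}(\StMod G)=0$, the bijection restricts to subsets of $\Proj H^{*}(G,k)$. There is essentially no obstacle at this stage: all the hard work was done in Parts~\ref{part:detection} and \ref{part:minimality}, culminating in the detection theorem, the identification $\pisupp_{G}=\supp_{G}$, and the minimality result; the present theorem is the formal payoff.
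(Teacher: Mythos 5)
Your proposal is correct and follows the same route the paper takes: minimality at each point of $\Proj H^*(G,k)$ is Theorem~\ref{thm:minimality}, and the bijection with subsets is the formal consequence from \cite[Theorem~3.8]{\bik:2011b}. Your explicit verification that $\gam_{\fm}(\StMod G)=0$ at the closed point $\fm=H^{\ges 1}(G,k)$ — via $\supp_G(M)=\varnothing$, Theorem~\ref{th:pisupp=bik}, and Theorem~\ref{th:main} — is a correct and welcome clarification of a step the paper leaves implicit.
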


\begin{proof} The first part of the assertion is precisely the
statement of Theorem~\ref{thm:minimality}.  The second part of the
assertion is a formal consequence of the first; see
\cite[Theorem~3.8]{Benson/Iyengar/Krause:2011b}. The inverse map sends
a subset $\mcV$ of $\Proj H^*(G, k)$ to the tensor ideal localising
subcategory consisting of all $G$-modules $M$ such that
$\supp_G(M)\subseteq\mcV$.
\end{proof}

The result below contains the first theorem from the introduction.

\begin{corollary}
\label{co:modules} Let $M$ and $N$ be non-zero $G$-modules. One can
build $M$ out of $N$ if (and only if) there is an inclusion
$\pisupp_{G}(M)\subseteq \pisupp_{G}(N)$.
\end{corollary}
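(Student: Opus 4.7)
The plan is to recognise the assertion ``$M$ can be built out of $N$'' as the statement that $M$ lies in $\Loc_G^{\otimes}(N)$, the smallest tensor ideal localising subcategory of $\StMod G$ containing $N$. The operations listed in the introduction---arbitrary direct sums, direct summands, positive and negative syzygies, extensions, and tensoring over $k$ with simple $G$-modules---cut out precisely this subcategory. The first four operations generate a localising subcategory, and adding closure under $-\otimes_{k} S$ for each simple $G$-module $S$ then forces closure under $-\otimes_{k} X$ for every $G$-module $X$, since every such $X$ is a filtered colimit of its finite dimensional submodules and each finite dimensional module admits a composition series with simple subquotients.

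Granting this reformulation, the ``only if'' direction is routine. Combining the tensor product formula of Theorem~\ref{th:tensor-and-hom-pi} with the elementary behaviour of $\pi$-support under direct sums, summands, syzygies and extensions, one sees that every object $X$ of $\Loc_G^{\otimes}(N)$ satisfies $\pisupp_{G}(X)\subseteq\pisupp_{G}(N)$; in particular this inclusion must hold for $M$.

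For the substantive ``if'' direction, I would first invoke Theorem~\ref{th:pisupp=bik} to identify $\pi$-support with the cohomological support $\supp_{G}$ developed in Section~\ref{se:bik}, and then apply the classification Theorem~\ref{th:stratification}. That theorem provides a bijection between tensor ideal localising subcategories of $\StMod G$ and subsets of $\Proj H^{*}(G,k)$, given by $\sfC\mapsto \bigcup_{X\in\sfC}\supp_{G}(X)$; in particular $\Loc_G^{\otimes}(N)$ corresponds to $\supp_{G}(N)$. Consequently $M\in\Loc_G^{\otimes}(N)$ if and only if $\supp_{G}(M)\subseteq\supp_{G}(N)$, which under Theorem~\ref{th:pisupp=bik} is exactly the hypothesised inclusion $\pisupp_{G}(M)\subseteq\pisupp_{G}(N)$. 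The principal obstacle---the stratification theorem itself---has already been handled in Part~\ref{part:minimality}, so at this stage only the translation between $\pi$-support and cohomological support and the identification of the building-operations with $\Loc_G^{\otimes}$ remain to be spelled out.
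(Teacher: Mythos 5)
Your proposal is correct and follows essentially the same route as the paper: identify ``$M$ is built out of $N$'' with membership in $\Loc_G^{\otimes}(N)$, and then read off the conclusion from the bijection in Theorem~\ref{th:stratification} (together with Theorem~\ref{th:pisupp=bik} to translate between $\pi$-support and cohomological support). The paper compresses your preliminary discussion into a citation of \cite[Proposition~2.1]{Benson/Iyengar/Krause:2011b}, but the substance is the same.
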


\begin{proof} 
  The canonical functor $\Mod G \to \StMod G$ that assigns a module to
  itself respects tensor products and takes short exact sequences to
  exact triangles. It follows that $M$ is built out of $N$ in $\Mod G$
  if and only if $M$ is in the tensor ideal localising subcategory of $\StMod G$
  generated by $N$; see also
  \cite[Proposition~2.1]{Benson/Iyengar/Krause:2011b}. The desired
  result is thus a direct consequence of
  Theorem~\ref{th:stratification}.
\end{proof}

\subsection*{Thick subcategories} 
As a 
corollary of Theorem~\ref{th:stratification} we deduce a
classification of the tensor ideal thick subcategories of $\stmod G$,
stated already in \cite[Theorem~6.3]{Friedlander/Pevtsova:2007a}. The
crucial input in the proof in \textit{op.\ cit.}\ is
\cite[Theorem~5.3]{Friedlander/Pevtsova:2007a}, which is flawed (see
Remark~\ref{re:fpnot}) but the argument can be salvaged by referring
to Theorem~\ref{th:main} instead. We give an alternative proof,
mimicking \cite[Theorem~11.4]{\bik:2011b}.

\begin{theorem}
\label{th:thick} 
Let $G$ be a finite group scheme over a field $k$. The assignment \eqref{eq:supp} induces a bijection between tensor ideal thick subcategories of $\stmod G$ and specialisation closed subsets of $\Proj H^*(G,k)$.
\end{theorem}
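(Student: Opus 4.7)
The plan is to deduce Theorem~\ref{th:thick} from the stratification Theorem~\ref{th:stratification} by passing between the compact part $\stmod G$ and the ambient category $\StMod G$, noting that $\stmod G$ is equivalent to the subcategory of compact objects of $\StMod G$.

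First I would verify that both assignments are well defined. For any $M \in \stmod G$ the support $\supp_G(M)$ is Zariski closed in $\Proj H^{*}(G,k)$, so the map~\eqref{eq:supp} sends thick subcategories of $\stmod G$ to unions of closed subsets, i.e., to specialisation closed subsets. In the other direction, for a specialisation closed $\mcV \subseteq \Proj H^{*}(G,k)$ the subcategory
\[
\sfC_\mcV := \{M \in \stmod G : \supp_G(M) \subseteq \mcV\}
\]
is tensor ideal and thick by Theorem~\ref{th:tensor-and-hom-pi}. To check $\supp(\sfC_\mcV) = \mcV$, the nontrivial inclusion uses specialisation closure: for $\fp \in \mcV$ one has $\mcV(\fp) \subseteq \mcV$, and the finite dimensional Koszul object $\kos k\fp$ of Notation~\ref{no:generic-point} satisfies $\supp_G(\kos k\fp) = \mcV(\fp)$ by~\eqref{eq:kos-support}, hence lies in $\sfC_\mcV$ and realises $\fp$ as an element of $\supp(\sfC_\mcV)$.

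The heart of the proof is verifying $\sfC_{\supp(\sfC)} = \sfC$ for an arbitrary tensor ideal thick subcategory $\sfC$ of $\stmod G$, the inclusion $\supseteq$ being tautological. Let $\sfL := \Loc^{\otimes}_G(\sfC)$ denote the tensor ideal localising subcategory of $\StMod G$ generated by $\sfC$. A standard argument, using that every $G$-module is a filtered colimit of its finite dimensional submodules and that $\sfC$ is already closed under $-\otimes_k N$ for $N \in \stmod G$, shows $\sfL = \Loc_G(\sfC)$. In particular, $\sfL$ is compactly generated by $\sfC$. On the one hand, Theorem~\ref{th:stratification} identifies $\sfL$ with the full subcategory of $G$-modules having support in $\supp(\sfC)$, whence $\sfL \cap \stmod G = \sfC_{\supp(\sfC)}$. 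On the other hand, Neeman's theorem on compact objects of compactly generated localising subcategories yields $\sfL \cap \stmod G = \Thick(\sfC) = \sfC$. Comparing the two descriptions of $\sfL \cap \stmod G$ gives the equality.

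The main obstacle I anticipate is the bridging step $\Loc^{\otimes}_G(\sfC) = \Loc_G(\sfC)$, without which Neeman's theorem does not directly apply. This is handled by a localising subcategory argument: for fixed $C \in \sfC$ the class $\{X \in \StMod G : C \otimes_k X \in \Loc_G(\sfC)\}$ is a localising subcategory of $\StMod G$ containing all compact objects, hence equal to $\StMod G$; an analogous argument in the other variable then gives that $\Loc_G(\sfC)$ is tensor ideal. Once this is in place, the proof is essentially formal given Theorem~\ref{th:stratification}.
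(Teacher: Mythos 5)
Your proposal is correct and follows essentially the same route as the paper: pass to $\StMod G$, realise specialisation closed subsets via supports of finite dimensional (Koszul) objects, and identify $\sfC$ with the compact objects of the localising subcategory it generates using Neeman's theorem together with Theorem~\ref{th:stratification}. The one thing you make explicit that the paper leaves implicit in its citation to Neeman is the bridge $\Loc^{\otimes}_G(\sfC)=\Loc_G(\sfC)$, which is indeed needed since Neeman's result concerns the plain localising subcategory generated by a set of compact objects.
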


\begin{proof} 
  To begin with, if $M$ is a finite dimensional $G$-module, 
  then $\supp_G(M)$ is a Zariski closed subset of $\Proj H^*(G,k)$;
  conversely, each Zariski closed subset of $\Proj H^*(G,k)$ is of
  this form. Indeed, given the identification of $\pi$-support and
  cohomological support, the forward implication statement follows
  from \cite[Proposition~3.4]{Friedlander/Pevtsova:2007a} while the
  converse is
  \cite[Proposition~3.7]{Friedlander/Pevtsova:2007a}. Consequently, if
  $\sfC$ is a tensor ideal thick subcategory of $\stmod G$, then
  $\supp_{G}(\sfC)$ is a specialisation closed subset of
  $\Proj H^*(G,k)$, and every specialisation closed subset of
  $\Proj H^*(G,k)$ is of this form. It remains to verify that the
  assignment $\sfC\mapsto \supp_{G}(\sfC)$ is one-to-one.

This can be proved as follows: $\StMod G$ is a compactly generated
triangulated category and the full subcategory of its compact objects
identifies with $\stmod G$. Thus, if $\sfC$ is a tensor ideal thick
subcategory of $\stmod G$ and $\sfC'$ the tensor ideal localising
subcategory of $\StMod G$ that it generates, then
$\sfC'\cap \stmod G=\sfC$; see \cite[\S5]{Neeman:1996a}. Since
$\supp_{G}(\sfC')=\supp_{G}(\sfC)$, Theorem~\ref{th:stratification}
gives the desired result.
\end{proof}

\subsection*{Localising subcategories closed under products} The
following result describes the localising subcategories of $\StMod
 G$ that are closed under products.

\begin{theorem}
\label{th:products} A tensor ideal localising subcategory of $\StMod
 G$ is closed under products if and only if the complement of its
support in $\Proj H^*(G,k)$ is specialisation closed.
\end{theorem}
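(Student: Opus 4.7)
The plan is to apply the stratification of $\StMod G$ by $H^*(G,k)$ (Theorem~\ref{th:stratification}) to translate closure under products into a geometric condition on the corresponding subset $\mcU := \supp_{G}\sfC$ of $\Proj H^*(G,k)$. The governing heuristic is that a tensor ideal localising subcategory is closed under products precisely when it arises as the essential image of a smashing Bousfield localisation of $\StMod G$; the Telescope Conjecture for $\StMod G$ (Theorem~\ref{th:smashing}) then classifies such localisations by specialisation closed subsets of $\Proj H^*(G,k)$, and the subset in question will turn out to be the complement of $\mcU$.

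First I would dispose of the implication ``$\Leftarrow$''. Assuming $\mcV := \Proj H^*(G,k) \setminus \mcU$ is specialisation closed, a $G$-module $M$ satisfies $\supp_{G} M \subseteq \mcU$ if and only if $\gam_\mcV M = 0$, and hence if and only if the canonical map $M \to L_\mcV M$ is an isomorphism; in other words, $\sfC$ coincides with the essential image of the localisation functor $L_\mcV$. The inclusion of this image into $\StMod G$ is right adjoint to $L_\mcV$, so it preserves all limits, and in particular products.

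For the converse, I would argue that if $\sfC$ is closed under products, then the inclusion $\iota \colon \sfC \hookrightarrow \StMod G$ preserves both products (by assumption) and coproducts (as $\sfC$ is localising). Brown representability for the well-generated triangulated category $\StMod G$ then yields a left adjoint $\lambda \colon \StMod G \to \sfC$ to $\iota$. The composite $\iota\lambda$ is a Bousfield localisation with essential image $\sfC$, and since both $\iota$ and $\lambda$ preserve coproducts, so does $\iota\lambda$, making it a smashing localisation. An appeal to Theorem~\ref{th:smashing} then identifies $\iota\lambda$ with $L_\mcV$ for a unique specialisation closed $\mcV \subseteq \Proj H^*(G,k)$, whence $\sfC = \{M : \supp_{G} M \subseteq \Proj H^*(G,k) \setminus \mcV\}$ and $\mcU = \Proj H^*(G,k) \setminus \mcV$.

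The main obstacle will be the appeal to the Telescope Conjecture, whose proof is itself a substantive application of the stratification machinery combined with the classification of thick subcategories of $\stmod G$ in Theorem~\ref{th:thick}. A more self-contained alternative would be to argue directly on supports: given $\fq \in \mcU$ and a proper generalisation $\fp \subsetneq \fq$, one would construct a product of modules in $\sfC$ whose support contains $\fp$, mirroring the classical phenomenon in commutative algebra that products of residue fields at closed points of a variety pick up generic points in their support. This construction can plausibly be carried out using the generic-point machinery developed in Section~\ref{se:passage-to-closed-points}.
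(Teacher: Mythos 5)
The paper's own ``proof'' is a one-line citation to \cite[Theorem~11.8]{\bik:2011b} together with the remark that the argument carries over verbatim, with stratification as the main ingredient; so you are necessarily on your own in supplying details, and the structure you propose is a reasonable one. Your backward direction is fine: given stratification, $\sfC$ is exactly the class of $\mcV$-local objects, i.e.\ the essential image of $L_\mcV$, and that image is closed under products because the inclusion of the local objects is right adjoint to $L_\mcV$.

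The forward direction does have a genuine gap, though it is fillable. When you ``appeal to Theorem~\ref{th:smashing}'' you are implicitly applying it to the kernel $\sfD:={}^{\perp}\sfC$ of the smashing localisation $\iota\lambda$, but Theorem~\ref{th:smashing} is stated only for \emph{tensor ideal} localising subcategories, and you have not checked that $\sfD$ is tensor ideal. This is not automatic from abstract nonsense: for a localising subcategory $\sfC$ that is merely tensor ideal, ${}^{\perp}\sfC$ need not be. Here the hypothesis on $\sfC$ saves you: letting $S$ be the sum of the (finitely many, finite dimensional, hence dualisable) simple $G$-modules, for $X\in\sfD$ and $Z\in\sfC$ one has $\Hom(S\otimes_k X,Z)\cong\Hom(X,S^{\vee}\otimes_k Z)=0$ since $S^{\vee}\otimes_k Z\in\sfC$ by tensor-ideality of $\sfC$; hence $S\otimes_k X\in\sfD$, and since $\Loc_G^{\otimes}(X)=\Loc_G(S\otimes_k X)$ (the identity used in the proof of Lemma~\ref{le:building}) it follows that $\sfD$ is tensor ideal. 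With that supplied, the rest of your argument goes through: $\sfD$ is tensor ideal, localising, and smashing, so by Theorem~\ref{th:smashing} its support $\mcV$ is specialisation closed, stratification gives $\sfD=(\StMod G)_{\mcV}$, so $\iota\lambda=L_{\mcV}$, and therefore $\mcU=\supp_G\sfC=\Proj H^*(G,k)\setminus\mcV$.

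One further remark on logical hygiene: in the paper Theorem~\ref{th:products} is stated before Theorem~\ref{th:smashing}, but since the paper's proof of the latter is likewise an independent citation to \cite{\bik:2011b} (where the telescope theorem does not rest on the products theorem), using Theorem~\ref{th:smashing} here is not circular, merely a reversal of the order of presentation. Your closing sketch of a ``self-contained'' argument --- producing an object in $\sfC$ as a product of $\fq$-local $\fq$-torsion modules whose support picks up a strict generalisation $\fp\subsetneq\fq$, on the model of $\prod_n\bbZ/p^n$ acquiring the generic point --- is the more direct route and is likely closer in spirit to the cited proof, but as you note it would require more work with the functors $\gam_{\fp}$ (which do not commute with products) and is not carried out.
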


\begin{proof}
For the case that $kG$ is the group algebra of a finite group, see \cite[Theorem~11.8]{\bik:2011b}.  The argument applies
verbatim to finite group schemes; the main ingredient is the stratification of $\StMod G$, Theorem~\ref{th:stratification}.
\end{proof}

\subsection*{The telescope conjecture} 
A localising subcategory of a 
compactly generated triangulated category $\sfT$ is \emph{smashing} if
it arises as the kernel of a localisation functor $\sfT\to\sfT$ that
preserves coproducts. The telescope conjecture, due to Bousfield and
Ravenel \cite{Bousfield:1979a,Ravenel:1984a}, in its general form is
the assertion that every smashing localising subcategory of $\sfT$ is
generated by objects that are compact in $\sfT$; see
\cite{Neeman:1992b}. The following result confirms this conjecture for
$\StMod G$, at least for all smashing subcategories that are tensor
ideal. Note that when the trivial $kG$-module $k$ generates $\stmod G$
as a thick subcategory (for example, when $G$ is unipotent) each
localising subcategory is tensor ideal.

\begin{theorem}
\label{th:smashing} 
Let $\sfC$ be a tensor ideal localising subcategory of $\StMod  G$. The following conditions are equivalent:
\begin{enumerate}[\quad\rm(1)]
\item The localising subcategory $\sfC$ is smashing.
\item The localising subcategory $\sfC$ is generated by objects compact in $\StMod  G$.
\item The support of $\sfC$ is a specialisation closed subset of
$\Proj H^*(G, k)$.
\end{enumerate}
\end{theorem}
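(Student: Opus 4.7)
The plan is to prove the cycle $(2)\Rightarrow(1)\Rightarrow(3)\Rightarrow(2)$, drawing on Theorems~\ref{th:stratification}, \ref{th:thick}, and \ref{th:products}. The implication $(2)\Rightarrow(1)$ is a general theorem of Neeman valid in any compactly generated triangulated category and requires no further input. For $(3)\Rightarrow(2)$, set $V := \supp_G(\sfC)$; since $V$ is specialisation closed, Theorem~\ref{th:thick} supplies a tensor ideal thick subcategory $\sfD$ of $\stmod G$ with $\supp_G(\sfD) = V$. The tensor ideal localising subcategory $\sfC'$ of $\StMod G$ generated by $\sfD$ is then generated by compact objects (since $\stmod G$ is the subcategory of compacts in $\StMod G$) and has support $V$, so the bijection of Theorem~\ref{th:stratification} forces $\sfC' = \sfC$.

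The crux is the implication $(1)\Rightarrow(3)$, the nontrivial direction of the telescope conjecture. I would analyse the right orthogonal $\sfC^{\perp}$: since $\sfC$ is smashing, the associated localisation functor $L$ preserves coproducts, so $\sfC^{\perp}$, being the essential image of $L$, is closed under coproducts; as a right orthogonal it is automatically closed under products, and it is triangulated. The tensor ideal hypothesis on $\sfC$ transfers to $\sfC^{\perp}$ via the closed symmetric monoidal structure of $\StMod G$: one first checks for compact (hence strongly dualisable) $Y$, using the adjunction $\sHom_G(C\otimes_k Y, X) \cong \sHom_G(C, \Hom_k(Y,X))$ together with the tensor ideal property of $\sfC$, and then extends to arbitrary $Y$ by compact generation of $\StMod G$ and closure of $\sfC^{\perp}$ under coproducts and triangles. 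Thus $\sfC^{\perp}$ is a tensor ideal localising subcategory closed under products, so Theorem~\ref{th:products} applies and shows that $\Proj H^{*}(G,k) \setminus \supp_G(\sfC^{\perp})$ is specialisation closed. Finally, stratification (Theorem~\ref{th:stratification}) identifies the orthogonality pairing with complementation of subsets of $\Proj H^{*}(G,k)$, yielding $\supp_G(\sfC^{\perp}) = \Proj H^{*}(G,k) \setminus \supp_G(\sfC)$; hence $\supp_G(\sfC)$ itself is specialisation closed.

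The main obstacle lies in the step $(1)\Rightarrow(3)$: verifying that $\sfC^{\perp}$ is tensor ideal and that its support is precisely the complement of $\supp_G(\sfC)$. Both points rely on the interplay between stratification and the closed tensor triangulated structure on $\StMod G$, and the arguments mirror those of \cite[Section~11]{\bik:2011b}, whose proofs carry over verbatim to the present setting, as already observed in the proof of Theorem~\ref{th:products}.
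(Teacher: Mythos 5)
Your proposal is correct and reproduces, in outline, exactly the argument of \cite[Theorem~11.12]{\bik:2011b} that the paper itself cites (the paper's proof is literally ``carries over verbatim''), namely the cycle $(2)\Rightarrow(1)\Rightarrow(3)\Rightarrow(2)$ with $(1)\Rightarrow(3)$ handled via the right orthogonal $\sfC^{\perp}$ and Theorem~\ref{th:products}. One phrasing nit: the equality $\supp_{G}(\sfC^{\perp})=\Proj H^{*}(G,k)\setminus\supp_{G}(\sfC)$ is not a direct consequence of stratification alone, as your sentence suggests; the containment $\supp_G(\sfC)\cup\supp_G(\sfC^{\perp})=\Proj H^*(G,k)$ uses the localisation triangle $\gam_{\sfC}X\to X\to L_{\sfC}X$ together with subadditivity of support on triangles, while the disjointness uses minimality of the $\gam_{\fp}(\StMod G)$ — but this is exactly what happens in \cite[Section~11]{\bik:2011b}, to which you correctly defer.
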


\begin{proof} 
If $G$ is a finite group this result is \cite[Theorem~11.12]{\bik:2011b} and is deduced from the special case of Theorem~\ref{th:stratification} for finite groups. 
The proof carries over verbatim to group schemes.
\end{proof}

\subsection*{The homotopy category of injectives} 
Let $\KInj{G}$ denote the triangulated category whose objects are the complexes of injective $G$-modules and whose morphisms are the homotopy classes of degree preserving maps of complexes.  As a triangulated category $\KInj{G}$ is compactly generated, and the compact objects are equivalent to $\sfD^{\sfb}(\mod G)$, via the functor $\KInj{G}\to \sfD(\Mod G)$. The tensor product of modules extends to complexes and defines a tensor product on $\KInj{G}$. This category was investigated in detail by Benson and Krause \cite{Benson/Krause:2008a} in case $G$ is a finite group; the more general case of a finite group scheme is analogous. Taking Tate resolutions gives an equivalence of triangulated categories from the stable module category $\StMod G$ to the full subcategory $\KacInj{G}$ of $\KInj{G}$ consisting of acyclic complexes. This equivalence preserves the tensor product. The Verdier quotient of $\KInj{G}$ by $\KacInj{G}$ is equivalent, as a triangulated category, to the unbounded derived category $\sfD(\Mod G)$. There are left and right adjoints, forming a recollement
\[ 
\StMod G \xrightarrow{\sim} \KacInj{G} \ 
\begin{smallmatrix}
- \otimes_k tk  \\ \hbox to 50pt{\leftarrowfill} \\ \hbox to
50pt{\rightarrowfill} \\ \hbox to 50pt{\leftarrowfill} \\ \Hom_k(tk,-)
\end{smallmatrix}
\ \KInj{G} \ 
\begin{smallmatrix} 
- \otimes_k pk \\
\hbox to 50pt{\leftarrowfill} \\ \hbox to 50pt{\rightarrowfill} \\
\hbox to 50pt{\leftarrowfill} \\ \Hom_k(pk,-) 
\end{smallmatrix} 
\  \sfD(\Mod G)
\] 
where $pk$ and $tk$ are a projective resolution and a Tate resolution of $k$ respectively.

The cohomology ring $H^{*}(G,k)$ acts on $\KInj{G}$ and, as in \cite{\bik:2008a,\bik:2012b}, the theory of supports and cosupports for $\StMod G$ extends in a natural way to $\KInj{G}$. It associates to each $X$ in $\KInj{G}$ subsets $\supp_{G}(X)$ and $\cosupp_{G}(X)$ of $\Spec H^{*}(G,k)$.  The Tate resolution of a $G$-module $M$ is $tk\otimes_{k}M$, so there are equalities
\[ 
\supp_{G}(M) = \supp_{G}(tk\otimes_{k}M)\quad\text{and}\quad
\cosupp_{G}(M) = \cosupp_{G}(tk\otimes_{k}M),
\] 
where one views $\Proj H^{*}(G,k)$ as a subset of $\Spec H^{*}(G,k)$. Thus Theorem~\ref{th:stratification} has the following
consequence.

\begin{corollary}
\label{co:stratification} 
The homotopy category $\KInj{G}$ is stratified as a tensor triangulated category by the natural action of
the cohomology ring $H^*(G,k)$.  Therefore the assignment $\sfC\mapsto
\bigcup_{X\in\sfC}\supp_{G}(X)$ induces a bijection between the tensor
ideal localising subcategories of $\KInj{G}$ and the subsets of
$\Spec H^*(G, k)$. It restricts to a bijection between the tensor
ideal thick subcategories of $\sfD^{\sfb}(\mod G)$ and specialisation
closed subsets of $\Spec H^*(G, k)$.  \qed
\end{corollary}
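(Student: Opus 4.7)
The assertion has two parts: stratification of $\KInj{G}$ by the natural action of $H^{*}(G,k)$, and the ensuing classifications of tensor ideal localising subcategories of $\KInj{G}$ and of tensor ideal thick subcategories of $\sfD^{\sfb}(\mod G)$. Granting stratification, the first classification is a formal consequence by \cite[Theorem~3.8]{\bik:2011b}. The thick classification then follows by precisely the argument of Theorem~\ref{th:thick}: a finite dimensional $G$-module $M$, regarded as an object of $\KInj{G}$ via its injective resolution, has support $\supp_{\StMod G}(M) \cup \{\fm\}$, where $\fm := H^{\ges 1}(G,k)$; these Zariski closed subsets are the supports of nonzero compact objects, and their unions recover all specialisation closed subsets of $\Spec H^{*}(G,k)$. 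The bulk of the plan is therefore to establish stratification, i.e., to show that $\gam_{\fp}(\KInj{G})$ is zero or minimal for every $\fp \in \Spec H^{*}(G,k)$.

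Since $\Spec H^{*}(G,k) = \Proj H^{*}(G,k) \sqcup \{\fm\}$, the argument splits into two cases, and I would leverage the recollement $\StMod G \simeq \KacInj{G} \hookrightarrow \KInj{G} \to \sfD(\Mod G)$ displayed in the excerpt. For $\fp \in \Proj H^{*}(G,k)$, the identities $\supp_{G}(M) = \supp_{G}(tk \otimes_{k} M)$ and $\cosupp_{G}(M) = \cosupp_{G}(tk \otimes_{k} M)$ imply that the equivalence $\StMod G \xrightarrow{\sim} \KacInj{G}$ is compatible with the $H^{*}(G,k)$-action, hence induces an equivalence $\gam_{\fp}(\StMod G) \xrightarrow{\sim} \gam_{\fp}(\KacInj{G})$. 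The key claim to verify is $\gam_{\fp}(\KInj{G}) = \gam_{\fp}(\KacInj{G})$ for $\fp \in \Proj H^{*}(G,k)$; this amounts to showing that the ``non-acyclic'' piece of $\KInj{G}$, carved out by the functor $-\otimes_{k}pk$, is annihilated by $\gam_{\fp}$ away from $\fm$. Granting this, minimality of $\gam_{\fp}(\KInj{G})$ reduces to minimality of $\gam_{\fp}(\StMod G)$, which is Theorem~\ref{thmm:minimality}.

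For the remaining point $\fp = \fm$, I would show $\gam_{\fm}(\KInj{G}) = \Loc_{G}^{\otimes}(kG)$ and that this tensor ideal subcategory is equivalent to $\sfD(\Mod k)$, the derived category of $k$-vector spaces. The inclusion $\supp_{G}(kG) \subseteq \{\fm\}$ is immediate: $\End^{*}_{\KInj{G}}(kG) \cong kG$ is concentrated in degree zero, and $H^{*}(G,k)$ acts through the augmentation $H^{*}(G,k) \twoheadrightarrow k$. Minimality of $\sfD(\Mod k)$ as a tensor triangulated category is elementary, since its tensor unit is the field $k$. The main obstacle throughout is the compatibility claim in the previous paragraph, that $\gam_{\fp}(-\otimes_{k}pk) = 0$ for $\fp \in \Proj H^{*}(G,k)$; this requires delicate interplay between the local cohomology functors, the recollement idempotents, and the tensor structure on $\KInj{G}$. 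Once settled, stratification is immediate, and the two bijections follow.
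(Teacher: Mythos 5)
Your overall strategy is sound and matches the structure implicit in the paper: split $\Spec H^*(G,k)$ into $\Proj H^*(G,k)$ and the closed point $\fm$, handle the first part by transporting minimality from $\StMod G$ through the equivalence $\StMod G\simeq \KacInj{G}$, and handle $\fm$ separately. The verification that $\gam_{\fp}\bigl(\KInj{G}\bigr)=\gam_{\fp}\bigl(\KacInj{G}\bigr)$ for $\fp\in\Proj H^*(G,k)$ is indeed the content that makes the reduction go through, and it does hold: $\supp_{G}(pk)=\{\fm\}$, so $\gam_{\fp}$ kills the image of $-\otimes_k pk$ whenever $\fp\neq\fm$. Your reading of the thick-subcategory classification is also correct.

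However, your treatment of the point $\fm$ contains an error. You claim $\gam_{\fm}\bigl(\KInj{G}\bigr)=\Loc_G^{\otimes}(kG)$ (correct) and that this category is equivalent to $\sfD(\Mod k)$ (incorrect). The object $kG$, viewed as a complex concentrated in degree zero, is a compact generator of $\Loc_G(kG)$ with graded endomorphism ring $\End^*_{\KInj{G}}(kG)\cong kG$ concentrated in degree $0$, so $\Loc_G(kG)\simeq \sfD(\Mod G)$, not $\sfD(\Mod k)$. Equivalently, $-\otimes_k pk\colon\sfD(\Mod G)\to\KInj{G}$ is fully faithful with essential image $\Loc_G(pk)=\Loc_G(kG)$. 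For a nontrivial $G$ this category has many localising subcategories, so your argument for minimality (``its tensor unit is a field'') does not apply. The minimality that is actually needed is as a tensor ideal of $\KInj{G}$: given $0\neq X\in\gam_{\fm}\bigl(\KInj{G}\bigr)$, since $X$ is not acyclic (nonzero objects of $\Loc_G(kG)$ are $K$-projective, hence never acyclic), some $H^n(X)\neq 0$; then $X\otimes_k kG$ has cohomology $H^*(X)\otimes_k kG$, a direct sum of copies of $kG$, and because $kG$ is injective this complex splits in $\KInj{G}$ into a sum of shifts of $kG$, exhibiting $kG$ as a summand. Hence $kG\in\Loc_G^{\otimes}(X)$ and $\gam_{\fm}\bigl(\KInj{G}\bigr)$ is minimal. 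With this correction, your plan is complete and follows the route the paper intends.
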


With this result on hand, one can readily establish analogues of Theorems~\ref{th:products} and \ref{th:smashing} for $\KInj{G}$. We
leave the formulation of the statements and the proofs to the interested reader; see also \cite[Sections~10 and 11]{\bik:2011b}.

To wrap up this discussion, we record a proof of the following result mentioned in the introduction.

\begin{theorem}
\label{th:last}
If $G$ is unipotent and $M,N$ are $G$-modules, then $\Ext^{i}_{G}(M,N)=0$ for some $i\ge 1$ if only if $\pisupp_{G}M\cap \picosupp_{G}N=\varnothing$; when these conditions hold, $\Ext^{i}_{G}(M,N)=0$ for all $i\ge 1$.
\end{theorem}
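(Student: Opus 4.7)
Set $L := \Hom_k(M,N)$. Since $M$ is $k$-flat, one has $\Hom_G(M,-) = (-)^G\circ\Hom_k(M,-)$, hence $\Ext^i_G(M,N) \cong H^i(G,L)$ for every $i \geq 1$; the entire theorem thus reduces to controlling the cohomology of the single $G$-module $L$. Theorem~\ref{th:tensor-and-hom-pi}(2) combined with Theorem~\ref{th:picosupp=bik} gives the equality $\cosupp_G L = \pisupp_G M \cap \picosupp_G N$, and Corollary~\ref{co:maxsupp=maxcosupp} then identifies projectivity of $L$ with the vanishing of this set. Since $kG$ is Frobenius, a projective $L$ is also injective, so $H^i(G,L) = 0$ for every $i \geq 1$. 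This establishes the chain $\pisupp_G M \cap \picosupp_G N = \varnothing \Rightarrow \Ext^i_G(M,N) = 0$ for all $i \geq 1 \Rightarrow$ the same vanishing for some $i \geq 1$.

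The substantive remaining implication is: $\Ext^i_G(M,N) = 0$ for some single $i \geq 1$ forces $L$ to be projective. I argue the contrapositive. Suppose $L$ is not projective; by Theorem~\ref{th:main} there is a $\pi$-point $\alpha\colon K[t]/(t^p)\to KG$, corresponding to some $\fp \in \Proj H^*(G,k)$, at which $\alpha^*(L_K)$ fails to be projective over $K[t]/(t^p)$. Exactly as in the proof of Theorem~\ref{th:tensor-and-hom-pi}, one has $\alpha^*(L_K) \cong \Hom_K(\alpha^*(M_K),\alpha^*(N_K))$ as $K[t]/(t^p)$-modules, up to an antipode correction that is absorbed by nilpotent elements. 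The algebra $K[t]/(t^p)$ has periodic cohomology (period $1$ in characteristic $2$, period $2$ otherwise), so a $K[t]/(t^p)$-module is projective if and only if its cohomology vanishes in any single positive degree; hence $H^j(K[t]/(t^p),\alpha^*(L_K)) \neq 0$ for every $j \geq 1$.

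The main obstacle is to transfer this $\pi$-point-level non-vanishing into non-vanishing of $H^i(G,L)$ itself for every $i \geq 1$. The strategy is to localise at the prime $\fp$ corresponding to $\alpha$ and exploit the minimality of $\gam_\fp(\StMod G)$ from Theorem~\ref{thm:minimality}. Specifically, $\gam_\fp L$ is a nonzero object of this minimal tensor ideal localising subcategory, so by Theorem~\ref{th:stratification} it generates $\gam_\fp(\StMod G)$ as a tensor ideal localising subcategory; consequently the graded $H^*(G,k)$-module $\sHom^*_G(k,\gam_\fp L)$ is nonzero and, being $\fp$-local and $\fp$-torsion, is a module over the graded residue field $k(\fp)$. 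Combining this local non-vanishing with the $\pi$-point periodicity through the Koszul-model description of $\gam_\fp k$ in Theorem~\ref{th:residue-model} should yield $H^i(G,L)_\fp \neq 0$ for every $i \geq 1$, contradicting the hypothesis. The technical heart is verifying that this combination fills \emph{every} positive degree without gaps: the $\pi$-point periodicity controls the arithmetic progression of nonzero degrees dictated by $k(\fp)$, and the stratification, applied to each shift $\Omega^{-i}L = \Hom_k(M,\Omega^{-i}N)$ (which shares the same support at $\fp$), rules out any missing degree.
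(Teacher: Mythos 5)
Your reduction to $L=\Hom_k(M,N)$ and the chain of equivalences $\pisupp_G M\cap\picosupp_G N=\varnothing \iff \picosupp_G L=\varnothing \iff L\ \text{projective}$, via Theorem~\ref{th:tensor-and-hom-pi}(2) and Corollary~\ref{co:maxsupp=maxcosupp}, matches the paper exactly, as does the easy implication that projectivity of $L$ forces $H^i(G,L)=0$ for all $i\ge 1$. The problem is the remaining implication.

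What you offer there is not a proof but an outline of a strategy, and you say so yourself: ``should yield $H^i(G,L)_\fp\ne 0$'' and ``the technical heart is verifying that this combination fills \emph{every} positive degree without gaps.'' That verification is exactly what is missing, and it is not a routine gap. Worse, your sketch never invokes the hypothesis that $G$ is unipotent. But the statement is false without it: for $G=\bbZ/2\times\bbZ/3$ in characteristic $2$ and $\omega$ a nontrivial $1$-dimensional $\bbZ/3$-representation, $L=\Hom_k(k,\omega)=\omega$ is not projective over $kG$, yet $H^i(G,\omega)=0$ for all $i$. So any argument that passes from ``$L$ not projective, hence $\gam_\fp L\ne 0$ for some $\fp$'' to ``$H^i(G,L)\ne 0$ for all $i\ge 1$'' must use unipotence somewhere; yours does not, which means the transfer step cannot go through as conceived. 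The concrete obstruction is the passage from $\gam_\fp L\ne 0$ (which only says $\sHom^*_G(C,\gam_\fp L)\ne 0$ for \emph{some} compact $C$) to nonvanishing of $\sHom^*_G(k,-)$; it is precisely unipotence, via the fact that $k$ generates $\stmod G$ as a thick subcategory, that closes this gap, and that is where your argument is silent.

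The paper's own proof of this step is far more elementary and sidesteps all the local machinery you deploy. Since $G$ is unipotent, $k$ is the unique simple $G$-module, so every finite dimensional $G$-module $N'$ has a finite filtration with subquotients $\cong k$; therefore $\Ext^i_G(k,L)=0$ for one $i\ge 1$ forces $\Ext^i_G(N',L)=0$ for all finite dimensional $N'$, by devissage along the filtration. Rewriting this as $\Ext^1_G(N',\Omega^{1-i}L)=0$ for all finite dimensional $N'$ shows $\Omega^{1-i}L$ is injective (Baer's criterion over the finite dimensional algebra $kG$), hence projective since $kG$ is Frobenius, hence $L$ is projective. No $\pi$-points, no minimality, no stratification, no periodicity; the entire heavy machinery is already used to establish the support/cosupport equivalences, and this last step needs only the uniqueness of the simple module. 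Your route may be completable, but as written it neither closes the gap nor identifies where the essential hypothesis enters.
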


\begin{proof} Let $L = \Hom_{k}(M,N)$. Then Theorem~\ref{th:tensor-and-hom-pi}(2) and Corollary~\ref{co:maxsupp=maxcosupp} imply that $\pisupp_{G}M\cap \picosupp_{G}N=\varnothing$ if and only if $\picosupp_{G}L = \varnothing$ if and only if $L$ is projective. If $L$ is projective, then $\Ext_{G}^i(M,N) \cong \Ext_{G}^i(k, L) = 0$ for $i>0$. 

It remains to show that if $\Ext_{G}^i(k, L) = 0$ for some $i>0$, then $L$ is projective.  Indeed, since $G$ is unipotent, $k$ is the only simple $G$-module. Hence, the condition $\Ext_{G}^i(k, L) = 0$ implies that $\Ext_{G}^i(N,L) =0$ for any finite dimensional $G$-module $N$ since it has a finite filtration with all subquotients isomorphic to $k$. Therefore, $\Ext_{G}^1(N, \Omega^{1-i}L) \cong \Ext_{G}^i(N,L) =0$ for any finite dimensional $G$-module $N$. We conclude that $\Omega^{1-i}L$ is projective and hence that $L$ is projective. 
\end{proof}

It is immediate from definitions that $\pisupp_{G}(M)=\picosupp_{G}(M)$ for any finite dimensional $G$-module $M$. Thus, the result above implies that when $M$ and $N$ are finite dimensional, one has
\[
\Ext^{i}_{G}(M,N)=0 \quad \text{for $i\gg 0$} \qquad \iff \qquad  \Ext^{i}_{G}(N,M)=0 \quad\text{for $i\gg 0$}\,.
\]
This can be verified directly, using  the results from \cite{Friedlander/Pevtsova:2007a} pertaining only to finite dimensional $G$-modules.

\begin{ack} 
The authors are grateful to Amnon Neeman for a careful reading of this paper and in particular for pointing out an error in an earlier version of Theorems~\ref{th:unip} and \ref{th:generic-point}. Part of this article is based on work supported by the National Science Foundation under Grant No.\,0932078000, while DB, SBI, and HK were in residence at the Mathematical Sciences Research Institute in Berkeley, California, during the 2012--2013 Special Year in Commutative Algebra.  The authors thank the Centre de Recerca Matem\`atica, Barcelona, for hospitality during a visit in April 2015 that turned out to be productive and pleasant.
\end{ack}

\bibliographystyle{amsplain}
\newcommand{\noopsort}[1]{}
\providecommand{\MR}{\relax\ifhmode\unskip\space\fi MR }
\providecommand{\MRhref}[2]{%
  \href{http://www.ams.org/mathscinet-getitem?mr=#1}{#2}}
\providecommand{\href}[2]{#2}

\end{document}